\documentclass[12pt]{amsart}
\usepackage[headings]{fullpage}

\usepackage{amsmath,amssymb,amsthm}

\usepackage{graphicx}
\usepackage{enumerate}
\usepackage{xcolor}
\usepackage{url}
\usepackage{slashed}
\usepackage{bm}
\usepackage{tikz-cd}
\usepackage{pb-diagram}
\usepackage[german,english]{babel}
 \usepackage{tikz}
 \usetikzlibrary{calc,angles,quotes}
\usepackage{enumitem}
\usepackage[bookmarks=true,%
colorlinks=true,%
linkcolor=blue,%
citecolor=blue,%
filecolor=blue,%
menucolor=blue,%
urlcolor=blue,%
breaklinks=true]{hyperref}

\usepackage[all]{xy}
\usepackage{verbatim}

\usepackage{algorithm}  
\usepackage{algpseudocode}
\usepackage{amsmath}     
\usepackage{amsfonts}    
\usepackage{multirow}
\usepackage{array}

\newtheorem{thm}{Theorem}[]
\newtheorem*{thm*}{Theorem}
\newtheorem{lem}[thm]{Lemma}
\newtheorem{prop}[thm]{Proposition}
\newtheorem{cor}[thm]{Corollary}

\newtheorem{rem}[thm]{Remark}
\newtheorem{defn}[thm]{Definition}

\newcommand{\param}{{\mathchoice{\mkern1mu\mbox{\raise2.2pt\hbox{$
					\centerdot$}}
			\mkern1mu}{\mkern1mu\mbox{\raise2.2pt\hbox{$\centerdot$}}\mkern1mu}{
			\mkern1.5mu\centerdot\mkern1.5mu}{\mkern1.5mu\centerdot\mkern1.5mu}}}

\renewcommand{\setminus}{{\smallsetminus}}

\author{Ni An, Jiexing Chen, Ingrid Irmer, Paul Schmutz Schaller}
\title{The Thurston spine and equivariant cell decompositions in genus 2}

\begin{document}
	\maketitle
	\begin{abstract}
		\noindent
		The Thurston spine of the genus 2 Teichm\"uller space of closed compact surfaces is obtained. In particular, we identify all mapping class group-orbits of filling sets of systoles in genus 2. Thereby, surprisingly rich cases already appear in this simplest case which is genus 2. This is contrasted with a cell decomposition of a bordification of Teichm\"uller space obtained computationally using Rivin's angle coordinates, on which the cells are labelled by Delaunay graphs closely related to our systolic graphs.
	\end{abstract}
	
	
	\section{Introduction}
	Every point $x$ of Teichm\"uller space $\mathcal{T}_{g}$ corresponds to a marked hyperbolic surface $S(x)$. On $S(x)$ there is at least one shortest geodesic; such geodesics are called \textit{systoles}. The \textit{Thurston spine} $\mathcal{P}_{g}$, \cite{Thurston} is the set of points in $\mathcal{T}_{g}$ at which the systoles cut the surface into polygons, i.e. the systoles fill the surface. It follows from \cite{Lojasiewicz1964} that $\mathcal{P}_{g}$ can be triangulated. As this paper is concerned with the genus 2 case, from now on $\mathcal{T}$ and $\mathcal{P}$ will be used to denote $\mathcal{T}_{2}$ and $\mathcal{P}_{2}$ respectively. A triangulation of $\mathcal{P}$ with interiors of simplices labelled by the sets of systoles is obtained. This is achieved mainly via theoretical arguments, with some numerical computations in some special cases. In the case of surfaces with punctures, spines of moduli space are obtained from cell decompositions of moduli space \cite{Harer} and \cite{PennerComplex}. In this paper we also survey what is known of related cell decompositions of moduli space of closed compact surfaces of genus 2, and give the details of an explicit computation based on Rivin's angle coordinates. 
	
	The function $syst:\mathcal{T}\rightarrow \mathbb{R}_{+}$ that takes every point $x$ to the length of the systoles on $S(x)$ is called the systole function. It is invariant under the action of the mapping class group on $\mathcal{T}$. Moreover, $syst$ is a topological Morse function, \cite{Akrout}, \cite{SchmutzMorse}. Topological Morse functions were first defined in \cite{Morse}. Informally these are continuous functions with isolated critical points, for which the level sets around regular or critical points of index $j$ are homeomorphic to the level sets around regular points or critical points of index $j$ respectively of a (smooth) Morse function. As shown in \cite{Morse}, topological Morse functions -- once they have been shown to exist --can be used in the same way as the usual smooth Morse functions for studying the topology of a manifold. In \cite{ConvexTMF} it is explained how to construct topological Morse functions. Our calculations show that $\mathcal{P}$ can be described as the union of unstable manifolds of the critical points of $syst$. It was shown in \cite{MS} that the Thurston spine always contains such a union, and examples from \cite{Ni} show that the Thurston spine sometimes has larger dimension than a union of unstable manifolds. For topological Morse functions, stable and unstable manifolds of critical point are not always uniquely defined, so we prefer to use the canonical object $\mathcal{P}$ in place of unstable manifolds of critical points.
	
	We show that there are exactly four different mapping class group-orbits of sets of geodesics that fill and do not contain any proper subsets that fill; these are called minimal filling sets. Minimal filling sets are interesting in that generically they label the highest dimensional strata of $\mathcal{P}$. In Subsection \ref{subdefns} we define four distinct sets of filling systoles, each with exactly four elements. These sets are called 4-star, 4-ring, 4-chain, and long $Y$.
\begin{prop}
\label{prop-minimal}
Every minimal filling subset of systoles is either a 4-star, a 4-ring, a 4-chain, or a long $Y$.
\end{prop}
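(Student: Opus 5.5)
Every minimal filling subset will be reduced to a purely combinatorial object, and the finitely many configurations that survive will be enumerated. The constraints come from standard facts about systoles on a closed hyperbolic genus 2 surface $S$: systoles are simple, and two distinct systoles intersect at most once. (Two intersections would let a surgery produce a strictly shorter essential curve. Separating systoles are impossible in genus 2, since a separating geodesic is longer than a suitable nonseparating one.) So a set of systoles is a collection of pairwise distinct nonseparating simple closed curves with pairwise intersection numbers in $\{0,1\}$. I also plan to use that the hyperelliptic involution fixes every simple closed geodesic, so systoles project to arcs between Weierstrass points on the six-punctured sphere $S/\iota$. This turns configurations of systoles into configurations of arcs joining six marked points on a sphere.

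\textbf{Step 1 (cardinality).} Show that a filling set of such curves has at least four elements. If the curves form a graph $G\subset S$, with the intersection points as vertices, and $k$ curves meet pairwise at most once, then $V\le \binom{k}{2}$ and $E=2V$. Filling requires every complementary region to be a disc, so $V-E+F=-2$, i.e.\ $F=V-2\ge 1$. Hence $V\ge 3$, which rules out $k\le 2$. For $k=3$, the three curves meet pairwise exactly once, so $V=3$, $F=1$, and the complement is a single $12$-gon. I will exclude this by a hyperbolic area or angle count: the three intersection angles sum to at most $3\cdot 2\pi$ in total, but a geodesic $12$-gon has area $10\pi-\sum\text{angles}=10\pi-6\pi=4\pi$. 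Area alone gives no contradiction here. Instead I plan to use the quotient picture: three curves pairwise meeting once project to three arcs on $S/\iota$ pairwise sharing one endpoint, forming a triangle through three Weierstrass points. A complementary region then contains the other three Weierstrass points and is not a disc upstairs, so the curves do not fill.

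\textbf{Step 2 (minimality bounds the size).} Show that a minimal filling set has exactly four elements. Given a filling set with $k\ge 5$ curves, I will exhibit a filling proper subset. In the quotient, a set of arcs fills iff the union of arcs is connected, contains all six marked points, and every complementary region of the sphere is a disc with at most one marked point in a suitable sense. Since each arc uses two of the six points and the arcs are pairwise "intersection at most once" (sharing an endpoint or crossing in a controlled way), connectivity plus coverage of six points forces at most $5$ arcs in a spanning tree-like structure. I would show that four suitably chosen arcs already fill.

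\textbf{Step 3 (classification).} Enumerate all ways four arcs on the six-punctured sphere can cover all six points, with the union connected and the complement consisting of discs. The combinatorial types of trees/graphs on six vertices with four edges that are connected plus one extra adjacency (a crossing) give the four shapes: a star (4-star), a cycle (4-ring), a path (4-chain), and a $Y$ with a long leg (long $Y$). Each graph type is then matched to the definitions in Subsection \ref{subdefns}. Uniqueness up to the mapping class group follows because the cyclic ordering at vertices is determined up to homeomorphism of the sphere fixing the marked set.

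\textbf{Main obstacle.} The hard part is Step 3: making rigorous the translation between "curves pairwise intersecting at most once" upstairs and the allowed arc configurations downstairs, especially when arcs cross in their interiors rather than meeting at Weierstrass points. It is also hard to exclude configurations that are topologically filling but cannot all be systoles simultaneously. For the latter I would first try length and trace inequalities, e.g.\ showing via a trace identity that a certain curve would be shorter than the purported systoles.
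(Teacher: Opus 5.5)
\textbf{Gap 1: the filling criterion, and hence Steps 1--3, are misstated.} Working on the quotient sphere $S/\tau$ is a reasonable route, but the criterion you rely on is wrong. The systoles project to a simple plane graph $G$ whose vertices are Weierstrass points. Arcs cannot cross in their interiors, since a crossing would lift to two intersection points $x$ and $\tau(x)$. Two arcs cannot share both endpoints.

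The correct criterion is: $G$ is connected and each face contains \emph{at most one} of the $6-v$ Weierstrass points not on $G$. This holds because a disc face with $j$ such points lifts to a surface of Euler characteristic $2-j$.

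It is not true that $G$ must contain all six points. A connected graph with four edges has at most five vertices, and the filling $4$-ring has only four. So your Step 3, with four arcs covering six points plus an ``extra adjacency'' from a crossing, describes no configuration, and the ``main obstacle'' you name does not arise.

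With the correct criterion, Euler's formula $f=2-v+e$ together with $f\ge 6-v$ gives $e\ge 4$ for every filling set. This also repairs Step 1, where $E=2V$ fails as soon as three systoles pass through one Weierstrass point. It further shows that exactly $e-4$ faces are empty. For $e\ge 5$ you can delete a non-bridge edge bounding an empty face. If no such edge exists, $G$ is a (necessarily spanning) tree, since a face bounded only by bridges forces $G$ to be a tree, and you delete a leaf edge. That is the argument your Step 2 lacks.

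\textbf{Gap 2: the enumeration misses a case that needs geometry.} Connected simple graphs with four edges come in five types, not four: the path, the star, the chair (long $Y$), the $4$-cycle, and the triangle with a pendant edge $\{p-q,q-r,r-p,r-s\}$. The last one is topologically a minimal filling set of nonseparating curves meeting pairwise at most once: put one leftover Weierstrass point inside the triangle and the other outside. So the proposition is not purely combinatorial.

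Ruling this case out needs Lemma~\ref{lem-triple}: a $3$-ring of systoles is a triple, i.e.\ it lies in a $(1,1)$-subsurface. Equivalently, one side of the triangle in $S/\tau$ contains no Weierstrass point. The paper proves this with the hexagon argument. The complementary hexagon, with a Weierstrass point at its centre, would contain a triangle whose shortest side (of half-systole length) is opposite an angle $\ge\pi/3$, forcing angle sum $\ge\pi$. Given the lemma, both leftover points lie in one face, so the configuration does not fill. The paper uses this as ``a minimal filling set contains no triple.''

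Your appeal to unspecified ``length and trace inequalities'' neither identifies this configuration nor supplies the lemma. You should also note that the $4$-cycle fills only for the embedding that separates the two leftover points. The other embedding is the non-filling $4$-ring of Lemma~\ref{lem-4ring}.
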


For each of the 4-minimal filling sets it is known that there is an, up to isometry, unique surface of genus 2 for which the lengths of the four geodesics is minimal subject to the constraint that the lengths are all equal. However, for all of these 4 "minimal" surfaces, this set of four filling geodesics is not the set of systoles, see Theorem \ref{thm-irm}. Nevertheless, for 4-chains  there exist surfaces where this set is the set of systoles, although none of these points are critical points of the topological Morse function $syst$.
	
	\setlength{\unitlength}{2cm}
	\begin{figure}
		\label{systolicgraphs}
		\begin{picture}(6,3)(-1.5,-1.0)

			\put(-0.6,-0){\line(1,0){0.8}}
			\put(-0.2,-0.4){\line(0,1){0.8}}
			\put(-0.25,-0.05){$\bullet$}
			\put(-0.65,-0.05){$\bullet$}
			\put(0.15,-0.05){$\bullet$}
			\put(-0.25,0.35){$\bullet$}
			\put(-0.25,-0.45){$\bullet$}
			
			\put(0.6,0){\line(1,0){1.6}}
			\put(0.55,-0.05){$\bullet$}
			\put(0.95,-0.05){$\bullet$}
			\put(1.35,-0.05){$\bullet$}
			\put(1.75,-0.05){$\bullet$}
			\put(2.15,-0.05){$\bullet$}
			
			\put(2.6,-0.2){\line(1,0){0.4}}
			\put(2.6,0.2){\line(1,0){0.4}}
			\put(2.6,-0.2){\line(0,1){0.4}}
			\put(3.0,-0.2){\line(0,1){0.4}}
			\put(2.55,-0.25){$\bullet$}
			\put(2.95,-0.25){$\bullet$}
			\put(2.55,0.15){$\bullet$}
			\put(2.95,0.15){$\bullet$}
			
			\put(4,-0.6){\line(0,1){0.8}}
			\put(4,0.2){\line(1,1){0.3}}
			\put(4,0.2){\line(-1,1){0.3}}
			\put(3.95,-0.65){$\bullet$}  
			\put(3.95,-0.25){$\bullet$} 
			\put(3.95,0.15){$\bullet$} 
			\put(3.65,0.45){$\bullet$} 
			\put(4.25,0.45){$\bullet$}
			
		\end{picture}
		\caption{The systolic graphs (defined in Subsection \ref{subdefns}) of a  4-star, a 4-chain, a 4-ring, and a long $Y$. These are planar graphs in the quotient of the genus two surface by the hyperelliptic involution. Vertices of the graphs are the fixed points of the hyperelliptic involution and edges correspond to systoles.}
	\end{figure}
	
	The critical points of $syst$ and their indexes and automorphism groups are determined in \cite{SchmutzMorse}. These are depicted in Figure \ref{figcriticalpoints} together with their angle coordinates. Some of the unstable manifolds of these critical points are fixed point sets of finite subgroups of the mapping class group. Such fixed point sets are determined theoretically in Section \ref{sec:theory}. Thurston was the first to show that the critical points are contained in $\mathcal{P}$. This construction is explained in detail in \cite{PapT}. The fact that the critical points are contained in $\mathcal{P}$ also follows directly from convexity arguments that appeared after \cite{Thurston}.
	
	The theoretical arguments of Section \ref{sec:theory} completely determine the orbits of sets of systoles that are realised in the interiors of the top dimensional cells of $\mathcal{P}$. In addition to these theoretical arguments, numerical calculations are performed to determine the systoles in the lower dimensional cells. This relies on the fact that the sets of systoles are locally finite, i.e., for every $x\in \mathcal{T}$, there exists a neighbourhood $\mathcal{N}$ of $x$ in $\mathcal{T}$ such that the systoles are contained in the set of systoles of $S(x)$ at every point of $\mathcal{N}$. Local finiteness follows from the observation that for any $l\in \mathbb{R}_{+}$, there exist only finitely many geodesics on $S(x)$ of length less than or equal to $l$. This observation follows from \cite{M}; a simpler proof uses the collar lemma. 

Theorem \ref{newtheorem} together with the computations from Section \ref{spinecomputation} give the following:
\begin{thm}
\label{mainthm}
The quotient of the Thurston spine in genus 2 by the action of the mapping class group is a 3-dimensional cell complex with 3 top-dimensional cells. This is depicted in Figure \ref{All_cell}. The sets of systoles in the interiors of the 3-cells are all 4-chains.
\end{thm}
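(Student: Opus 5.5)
The plan is to assemble the theorem from three ingredients: the classification of minimal filling sets (Proposition \ref{prop-minimal}), the theoretical analysis of Section \ref{sec:theory} (Theorem \ref{newtheorem}), and the numerical verification of Section \ref{spinecomputation} for the lower-dimensional strata. First I use local finiteness of systoles to stratify $\mathcal{P}$ by the set of systoles $S$. The stratum where the set of systoles is exactly $S$ is locally cut out by the equations $\ell_\alpha=\ell_\beta$ for $\alpha,\beta\in S$, together with the open conditions that all other curves are longer. Since $\dim \mathcal{T}=6$, a filling set $S$ whose length functions have independent differentials (mod the equalities) gives a stratum of dimension $6-(|S|-1)$.

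A filling set contains a minimal filling set, and by Proposition \ref{prop-minimal} every minimal filling set has four elements. So every stratum has codimension at least $3$, hence dimension at most $3$. Equality holds only when the set of systoles is itself a minimal filling set, so $\mathcal{P}$, and therefore $\mathcal{P}/\mathrm{Mod}$, is at most $3$-dimensional. The differential independence of the four length functions for each of the four types follows from the standard fact that lengths of distinct filling geodesics are locally independent. The Wolpert/Kerckhoff convexity argument applies here, and I would cite it from Section \ref{sec:theory}.

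Next I decide which of the four minimal types actually occur as the full set of systoles on an open $3$-dimensional set. For the 4-star, 4-ring and long $Y$, I would argue via Theorem \ref{thm-irm} and its extension in Section \ref{sec:theory}. On the locus where the four curves have equal length, the minimum of that common length is attained at a surface where some other curve is strictly shorter. Then along the whole $3$-dimensional equal-length locus I need the stronger claim that some other curve is always at most as short. I would prove this by exhibiting, for each type, an explicit competitor curve, for instance a curve crossing the star's central vertex region in the hyperelliptic quotient, whose length is bounded by that of the systoles through trigonometry of the right-angled hexagon or pentagon decomposition. This step is the main obstacle. It requires a global comparison on a noncompact $3$-dimensional locus, not just at the minimizing point, and I expect to reduce it via the hyperelliptic quotient to a sphere with six cone points where the comparison becomes planar hyperbolic geometry.

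For 4-chains, Theorem \ref{newtheorem} provides surfaces where the chain is exactly the set of systoles. The mapping class group orbits of such open regions are counted by analysing the components of the region in the $3$-dimensional equal-length locus where the chain curves are the strict minima. Three components arise up to the symmetry group, distinguished by which boundary strata, and hence which critical points of $syst$ from Figure \ref{figcriticalpoints}, they abut. This gives the 3 top cells. Finally, the numerical computations of Section \ref{spinecomputation} determine the sets of systoles on the faces, which shows how the closures glue into the cell complex of Figure \ref{All_cell}.
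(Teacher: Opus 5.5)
Your overall architecture matches the paper's: top strata are labelled by minimal filling sets, only 4-chains survive, and the cell structure is read off from Section \ref{spinecomputation}. However, the step you flag as ``the main obstacle'' is a genuine gap, and the route you sketch for it does not work. Theorem \ref{thm-irm} only locates the minimizer of the common length on $E(C)$ and says nothing about the other points of $E(C)$. Moreover, for the 4-star and the filling 4-ring that minimizer is the Bolza surface, where no curve is strictly shorter than those of $C$: they are systoles there, just not the only ones. The global comparison with a competitor curve over the noncompact locus $E(C)$ is never carried out.

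The paper needs no such global comparison. It starts from a hypothetical surface $S$ whose systoles contain $C$ and derives a contradiction:
\begin{itemize}
\item \emph{4-star.} The four consecutive angles at the common Weierstrass point sum to $\pi$. Each is $\geq\pi/4$ by Proposition \ref{prop-star}, with equality only for the Bolza surface, which has 12 systoles (Corollary \ref{cor-star}). The proof of Proposition \ref{prop-star} is a local competitor argument, the competitor being the third curve of a triple.
\item \emph{Filling 4-ring.} Cut the two octagons of $S\setminus C$ into eight triangles from the central Weierstrass point and compare angle sums with the area. This forces the triangles to be equilateral, so the diagonals are systoles too and $S$ is Bolza (Proposition \ref{prop-4ring}).
\item \emph{Long $Y$, with $\mathrm{sys}(S)=C$.} Run a combined twist along $z$ and $2-3$ that preserves all four lengths until a new systole appears. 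A case analysis via Propositions \ref{prop-pair}, \ref{prop-ring1}, \ref{prop-ring2} and \ref{prop-triple} then gives a contradiction (Proposition \ref{prop-ring3}).
\end{itemize}
These results are exactly what Theorem \ref{newtheorem} packages. You list it as an ingredient but do not use it for this step.

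Conversely, you invoke Theorem \ref{newtheorem} for something it does not say. It asserts only that a realised minimal filling set is a 4-chain, not that 4-chain strata are nonempty. Nonemptiness and the count of top cells come from the local analysis of the decompositions $\mathcal{V}(p)$ at the critical points in Section \ref{spinecomputation}. That analysis yields three $\mathrm{Aut}^{\pm}(M_{12})$-orbits of 4-chain strata adjacent to $M_{12}$. These give three orbits of 3-cells under the extended mapping class group, and six under the orientation-preserving one. Your sentence ``three components arise'' has no supporting argument and must be imported from there. The same goes for your unproved assertion that strata with at least five systoles have dimension below $3$.

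Finally, independence of length differentials is not a general property of filling geodesics. At $M_5$, $M_6$, $M_9$ the systole differentials are dependent, and the 9-systole stratum through $M_9$ is 1-dimensional rather than of dimension $6-8$. Your dimension bound only needs $E(C)$ to be a 3-dimensional submanifold when $C$ is a 4-chain; by Theorem \ref{thm37}, every filling set of systoles contains one. The paper takes this submanifold property from Section 3 of \cite{MS}, not from Section \ref{sec:theory}.
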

	
	
	There is a little known, surprisingly simple, construction of an equivariant cell decomposition of a bordification of $\mathcal{T}$ in \cite{SchmutzVoronoi} using sets of minima. This will be contrasted with a somewhat more complicated cell decomposition of another bordification of $\mathcal{T}$ obtained computationally. The latter cell decomposition is described theoretically in \cite{RivinII}, and is based upon a complete characterisation of the Delaunay decompositions of singular Euclidean surfaces, generalising the results of \cite{RivinI}. Related results were independently for example in \cite{Bowditch}, \cite{Garret}, \cite{Leibon} and \cite{Veech}. Our conclusion is that, at least in low genus, combinatorial descriptions of moduli space using the systole function are considerably simpler than those based on angle coordinates.
	
	The genus two case is special because each genus two surface has a hyperelliptic involution $\tau$ with six fixed points; these are called the Weierstrass points of the surface. This involution is also an isometry for the hyperbolic metric corresponding to a point in Teichm\"uller space. The study of the moduli space of genus two surfaces is thereby transformed into the problem of studying the moduli space of a sphere with six cone points. As pointed out in \cite{Armando}, Rivin's constraints can be used to determine the cell decompositions of the 2-sphere that can be realised as Voronoi decompositions centered on the Weierstrass points of $S(x)/\tau$ for some hyperbolic surface $S(x)$ corresponding to a point in moduli space. The 10 top-dimensional cells of such a cell decomposition, as well as some of the gluing maps, were determined in \cite{Armando}. The entire cell decomposition is computed here, as well as a cell decomposition of an induced bordification of $\mathcal{T}$, and a description of the four critical points of $syst$ in angle coordinates. 
	
	The cell decomposition using Voronoi decompositions centered on the Weierstrass points is closely related to the stratification of $\mathcal{T}$ by sets of systoles. The systolic graphs defined for sets of filling systoles, see for example Figure \ref{systolicgraphs}, are subgraphs of the Delaunay graphs of this cell decomposition. In genus greater than 2 we do not have the Weierstrass points to define a Voronoi/Delaunay decomposition, but the topological techniques using the stratification by sets of systoles, and the critical points of the systole function generalise. 

	This cell decomposition, as well as the spine, were shown to have the orbifold Euler characteristic $\frac{-1}{240}$ obtained in \cite{HarerZagier}.
	
	\textbf{Outline of the paper.} Section \ref{subdefns} is provided to introduce the assumptions, notations and some of the background to this subject used throughout the paper. Section \ref{sec:theory} provides theoretical arguments to completely determine the sets of filling systoles in the top-dimensional strata of $\mathcal{P}$. The algorithm and results of our calculation of the Thurston spine is given in Section \ref{spinecomputation}, and the final section surveys the two known equivariant cell decompositions of $\mathcal{T}$ as well as a description of the algorithm for obtaining the more complicated cell decomposition based on Rivin's angle coordinates. The Appendix gives the pseudocode for the computation of the computation of the cell decomposition based on angle coordinates.

\subsection*{Acknowlegments}
	The third author would like to thank Boris Springborn for help and advice in understanding the circle packing theory used in Subsection \ref{anglecoordssec}. 
	
	\subsection{Definitions and Background}
	\label{subdefns}
	
	The purpose of this subsection is to gather together the notation, definitions, and simple observations used throughout the paper.
	
	We denote by $S(x)$ a marked hyperbolic surface of genus 2 corresponding to a point $x$ in the Teichm\"uller space $\mathcal{T}$ of genus 2 surfaces. Subsurfaces of $S(x)$ are all assumed to be embedded with geodesic boundary. A $(g,n)$-\textit{subsurface} of $S$ has genus $g$, $g\in\{0,1\}$, and $n$ boundary geodesics, $n\in\{1,2,3,4\}$.
	
	A \textit{geodesic} is always assumed to be a simple closed geodesic in $S$. A \textit{systole} in $S$ is a shortest geodesic of $S$. The set of systoles of $S$ is denoted by $\mathrm{sys}(S)$. We say that a set $C$ of geodesics \textit{fills} if the complement of $C$ consists of topological disks. Following Thurston, we call the set of all points at which the set of systoles is exactly $C$ the \textit{stratum} labelled by $C$. The stratum labelled by $C$ will also be denoted by $\mathrm{str}(C)$.
	
	Let $S$ be such that $\mathrm{sys}(S)$ fills. Cut and paste arguments can be used to show that the elements of $\mathrm{sys}(S)$ are all non-separating with pairwise geometric intersection number at most one. In what follows, geodesics are assumed to be non-separating and with pairwise geometric intersection number at most one.
	
	Surfaces of genus 2 have a hyperelliptic involution with six fixed points. These six fixed points are called Weierstrass points. Every (simple, non-separating) geodesic passes through exactly two of them, \cite{Hyperelliptic}. For the sets of systoles we will be considering it will be shown that two geodesics cannot pass through the same pair of fixed points. Moreover, if two geodesics in the set intersect, they intersect in one of these six fixed points because otherwise they intersect at least twice.
	
	As a consequence, we may represent the set of systoles $\mathrm{sys}(S)$ by a graph with at most six vertices called the \textit{systolic graph} of $S$. Examples of systolic graphs are given in Figure \ref{systolicgraphs}. The vertices represent the fixed points of the hyperelliptic involution while the edges represent the systoles. Moreover the graph is connected if $\mathrm{sys}(S)$ fills. On the other hand, it will be shown that if the graph is connected and has at least five vertices, then $\mathrm{sys}(S)$ fills. We shall see in Lemma \ref{lem-4ring} that there is a particular case in which the graph is connected and has four vertices. This graph represents a set of systoles that might or might not fill.

For a set of nonseparating geodesics intersecting pairwise at most once, we will also speak of the systolic graph, constructed in the same way as for systoles. 
	
	The integers are used to label the fixed points of the hyperelliptic involution of $S$. A (non-separating) geodesic $a$ in $S$ joining $p$ and $q$ for different integers $p,q\in\{1,2,3,4,5,6\}$ will be denoted $p-q$.  We also write $a=p-q$ making no distinction between $p-q$ and $q-p$. The hyperelliptic involution is denoted by $\tau$.
	
	\begin{prop} \label{prop-graph} An automorphism $\psi$ of the surface $S$, i.e. an element of the mapping class group that fixes $S$ in Teichm\"uller space, induces an automorphism on the systolic graph of $S$.
	\end{prop}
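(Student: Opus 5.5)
The plan is to realise $\psi$ as an isometry and read off its action on vertices and edges of the systolic graph. Since $\psi$ fixes the point $x$ with $S=S(x)$, there is an isometry $\phi$ of the hyperbolic surface $S$ in the isotopy class of $\psi$: the marking composed with $\psi$ is equivalent to the original marking. Because $\phi$ is an isometry, it maps simple closed geodesics to simple closed geodesics of the same length. In particular $\phi$ permutes the systoles, so $\phi(\mathrm{sys}(S))=\mathrm{sys}(S)$; this gives the action on edges.

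For the vertices, I will use that the hyperelliptic involution $\tau$ is central in the isometry group of $S$. The conjugate $\phi\tau\phi^{-1}$ is again an orientation-preserving isometric involution with six fixed points whose quotient is a sphere. By uniqueness of the hyperelliptic involution it equals $\tau$. If $\phi$ is orientation reversing, the same conclusion holds: $\phi\tau\phi^{-1}$ is still a conformal involution with quotient a sphere, and it is also characterised as the unique element acting as $-1$ on homology. Hence $\phi$ maps $\mathrm{Fix}(\tau)$ to itself and so permutes the six Weierstrass points. This gives a permutation $\sigma$ of the vertex set $\{1,\dots,6\}$.

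Next I check incidence. Let $a=p-q$ be a systole, so $a$ passes through the Weierstrass points $p,q$ and through no others, by the cited fact that each nonseparating geodesic passes through exactly two. Then $\phi(a)$ is a systole passing through $\phi(p)$ and $\phi(q)$, and these are Weierstrass points, so $\phi(a)=\sigma(p)-\sigma(q)$. Thus the map on edges is compatible with $\sigma$ on endpoints. Both maps are bijections, since $\phi^{-1}$ induces their inverses. The graph is simple by the paper's convention that two systoles do not share the same pair of fixed points, so $\sigma$ alone determines the edge map, and we obtain a graph automorphism.

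The main subtle step is passing from a mapping class fixing the point of Teichmüller space to an honest isometry, and in particular making sure the isometry commutes with $\tau$, so that Weierstrass points go to Weierstrass points. I would cite the standard fact that the stabiliser of $x$ in the mapping class group equals $\mathrm{Isom}(S(x))$, together with centrality of $\tau$ (the hyperelliptic involution is central in the genus 2 mapping class group). The rest is bookkeeping.
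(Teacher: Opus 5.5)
Your proposal is correct and takes the same approach as the paper, whose proof is the single line that $\psi$ induces an automorphism of $\mathrm{sys}(S)$ and hence of the systolic graph. You make explicit the steps the paper leaves implicit: realising $\psi$ by an isometry, using that it commutes with $\tau$ so that it permutes the Weierstrass points (the vertices), and checking that each systole is sent to the systole through the images of its two Weierstrass points.
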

	\begin{proof} The automorphism $\psi$ induces an automorphism of $\mathrm{sys}(S)$ and the proposition follows.
	\end{proof}
	
Note that the converse to Proposition \ref{prop-graph} does not hold, because a graph automorphism might not preserve the embedding of the graph in the surface, in which case it would not correspond to an automorphism of the surface.

	A geodesic $c$ determines an analytic function $L(c)$ from $\mathcal{T}$ to $\mathbb{R}_{+}$, whose value at $x\in \mathcal{T}$ is given by the length of $c$ on the marked hyperbolic surface $S(x)$. This is a special case of a length function on $\mathcal{T}$.
	
	\begin{defn}[Length function]
		A finite set of curves $C=(c_{1}, \ldots, c_{k})$ together with a finite set of real, positive weights $A=(a_{1}, \ldots, a_{k})$ determine an analytic function $L(A,C):\mathcal{T}\rightarrow \mathbb{R}^{+}$ given by
		\begin{equation*}
			L(A,C)(x)\,=\, \sum_{j=1}^{k} a_{j}L(c_{j})(x)
		\end{equation*}
		$L(A,C)$ is a called a length function. 
	\end{defn}
	Length functions are known to be strictly convex with respect to the Weil-Petersson metric, \cite{Wolpert}. It was shown in \cite{SchmutzMorse} that a length function has a minimum in $\mathcal{T}$ iff $C$ fills. It follows from convexity that, when it exists, a minimum is unique. 
	
	\begin{defn}[Set of minima $\mathrm{Min}(C)$ \cite{SchmutzMorse}]
		Let $C$ be a finite set of curves. The set of minima, $\mathrm{Min}(C)$, is defined to be
		\begin{equation*}
			\{x\in \mathcal{T}\ |\ \exists \text{ an }A\in \mathbb{R}_{+}^{|C|}\text{ such that }L(A,C)(x)\leq L(A,C)(y) \ \forall y\in \mathcal{T}\}
		\end{equation*}
	\end{defn}


	\section{Filling sets of systoles}
	\label{sec:theory}
	
	The aim of this section is to use theoretical arguments to determine the set of filling systoles in the top-dimensional strata of the spine.
	
	\begin{defn}
		Let $p,q,r\in\{1,2,3,4,5,6\}$ be three distinct integers. A {\em 3-ring} is a set of three geodesics $\{p-q,q-r,r-p\}$.

If there is a $(1,1)$-subsurface $A$ of $S$ which contains the set $\{p-q,q-r,r-p\}$, then  $\{p-q,q-r,r-p\}$ is called a {\em triple}.
	\end{defn}
	
	\begin{lem} \label{lem-triple}
	Let $\{a,b,c\}$ be a 3-ring in $S$. If $a,b,c\in \mathrm{sys}(S)$, then $\{a,b,c\}$ is a triple.
	\end{lem}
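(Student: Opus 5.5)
The plan is to build the $(1,1)$-subsurface from two of the three systoles, and then show that the third one cannot leave it.

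\textbf{Step 1: the subsurface $A$.} Write $a=p-q$, $b=q-r$, $c=r-p$. By the standing assumptions, $a$ and $b$ are non-separating and meet at most once. Since both pass through the Weierstrass point $q$, they meet exactly once, transversally at $q$. A regular neighbourhood of $a\cup b$ is then a one-holed torus. Its boundary is homotopic to the commutator $aba^{-1}b^{-1}$, so it is essential and separating, and it is represented by a simple closed geodesic $\delta$. Let $A$ be the $(1,1)$-subsurface bounded by $\delta$ that contains $a\cup b$. It remains to show $c\subset A$.

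\textbf{Step 2: the involution on $A$.} The hyperelliptic involution $\tau$ preserves every simple closed geodesic, so it preserves $a$, $b$, $\delta$ and $A$. On $A$ it acts as the elliptic involution of the one-holed torus, which has exactly three fixed points in the interior. These are $q$, the second fixed point $p$ on $a$, and the second fixed point $r$ on $b$. Thus $p,r\in A$, and on the complementary $(1,1)$-subsurface $B$ the remaining three Weierstrass points are the fixed points of $\tau|_B$. Moreover, $A$ contains a unique simple closed geodesic $c'$ through $p$ and $r$: the one in the class $ab$ or $ab^{-1}$, namely the slope $(1,1)$ curve. It meets each of $a$ and $b$ exactly once. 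The goal becomes $c=c'$.

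\textbf{Step 3: $c$ does not leave $A$ (main obstacle).} Suppose $c\neq c'$. Since $c$ passes through $p,r\in A$ but is not the unique such geodesic in $A$, it must cross $\delta$. As $\delta$ separates, $c$ crosses it an even number of times.

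Next, use the structure of $c$ inside $A$. The curve $c$ meets $a$ only at $p$ and $b$ only at $r$, since intersection numbers are at most one. Cut $A$ along $a\cup b$; this gives an annulus with $\delta$ on one side and the square $aba^{-1}b^{-1}$ on the other. Then $c\cap A$ is a union of arcs in this annulus, with endpoints at the copies of $p$ and $r$ on the square or on $\delta$. Using $\tau$-invariance of $c$, I expect only one configuration to survive: one arc runs from the copies of $p$ to $\delta$, another from the copies of $r$ to $\delta$, and $\tau$ exchanges the two halves of $c$. Each half is an arc from $p$ to $r$ of length $\ell(c)/2$, and it passes through $B$.

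\textbf{Step 4: contradiction via length.} Replace the part of each half lying in $B$ by a subarc of $\delta$, or alternatively compare directly with $c'$. The halves of $c'$ are $p$-to-$r$ arcs inside $A$, homotopic (relative to the endpoints, in $S$) to the concatenation of a half of $a$ and a half of $b$. A half of $a$ has length $\mathrm{sys}/2$, and so does a half of $b$, so the concatenation has length $\mathrm{sys}$, and the geodesic representative is strictly shorter because there is a corner at $q$. Hence $\ell(c')<\ell(a)=\mathrm{sys}(S)$, contradicting the fact that $a$ is a systole. This shows the case $c\ne c'$ is impossible unless the curve $c$ itself is $c'$, so in all cases $c=c'\subset A$.

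In this step the point to check carefully is that it is $c'$ which is forced shorter than a systole, regardless of $c$. If so, the argument actually shows that a 3-ring of systoles cannot have $c'$ longer. The case analysis in Step 3 then serves only to identify $c$ with $c'$: $c'$ is the unique geodesic through $p$ and $r$ that meets $a$ and $b$ once without crossing $\delta$. I would therefore first try the cleaner route of showing directly that a geodesic through $p$ and $r$, meeting $a$ and $b$ at most once each, is isotopic into $A$, using the annulus picture. I expect this topological identification to be the main difficulty.
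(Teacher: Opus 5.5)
\textbf{Step 4 fails, and it is where the content of the lemma lies.} Write $s=\ell(a)=\ell(b)$ for the systole length. A half of $c'$ is homotopic rel endpoints to a half of $a$ followed by a half of $b$, which is a path of length $s/2+s/2=s$. So the comparison gives $\ell(c')/2<s$, i.e.\ $\ell(c')<2s$, not $\ell(c')<s$. You compared the whole of $c'$ with a path that represents only half of it.

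The warning sign, which you half noticed, is that the argument never uses $c$. If it were correct, any two intersecting systoles would produce a shorter closed geodesic, so no two systoles of a genus 2 surface could meet. That is false: the twelve systoles of the Bolza surface meet in pairs. So in the case where $c$ leaves $A$ you have not derived any contradiction. Your alternative, replacing the $B$-parts of $c$ by subarcs of $\delta$, is not developed and comes with no length inequality. The hypothesis that $c$ itself is a systole has to enter the argument essentially.

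\textbf{Smaller errors in Steps 2--3.} The curve $c'$ is not unique. Every simple closed geodesic of $A$ whose slope with respect to $(a,b)$ has both coordinates odd passes through $p$ and $r$. Two of them, in the classes $ab$ and $ab^{-1}$, meet $a$ and $b$ once each. So the goal is $c\subset A$, not $c=c'$, and the inference ``$c\neq c'$ forces $c$ to cross $\delta$'' is not valid.

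The configuration you only expect in Step 3 can be proved. An arc of $c\cap A$ disjoint from $a\cup b$ would lie in the annulus $A\setminus(a\cup b)$ with both endpoints on $\delta$. It would then be homotopic rel endpoints into $\delta$, which is impossible for a geodesic arc. So $c\cap A$ has at most two arcs. Since $\tau$ has no fixed point on $c\cap B$, it cannot preserve a single arc there. Hence $c$ meets $\delta$ exactly four times, with one arc of $c\cap A$ through $p$ and one through $r$.

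\textbf{The missing idea} is the geometric argument the paper uses in exactly this configuration.
\begin{enumerate}
\item $S\setminus(a\cup b\cup c)$ has a hexagonal component $H$. Its six sides are halves of $a,b,c$, all of length $s/2$, and opposite vertices are the same Weierstrass point.
\item $\tau$ acts on $H$ as the rotation by $\pi$ about a fourth Weierstrass point $w$.
\item A segment from $w$ to a vertex, together with its $\tau$-image, forms a closed geodesic of twice its length. So each such segment has length at least $s/2$.
\item The three resulting diagonals cut $H$ into six triangles. One of them has angle at least $\pi/3$ at $w$.
\item In that triangle, the side opposite $w$ has length $s/2$ and is a shortest side. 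So the angle at $w$ is a smallest angle, and the angle sum is at least $\pi$. This is impossible for a hyperbolic triangle.
\end{enumerate}
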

	\begin{proof}Note that there is a unique separating geodesic $z$ in $S$ such that $a$ and $b$ are contained in the $(1,1)$-subsurface $A$ of $S$ with boundary $z$. For the geodesic $c$, there are two different topological possibilities; $c$ is either in $A$ or $c$ intersects $z$ four times. In the latter case,  $c\,\cap\, A$ has two disjoint components. It suffices to show that this case is impossible when $a,b,c$ are systoles of $S$. 
		
		\begin{figure}[!thpb]
			\centering
			\includegraphics[width=0.7\textwidth]{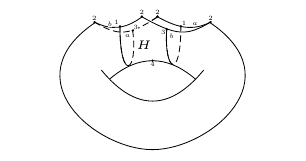}
			\caption{The hexagon $H$ from Lemma \ref{lem-triple} contained in $S\setminus\{a,b\}$. }
			\label{hexagon}
		\end{figure}
		
		Suppose $c\,\cap\, A$ has two disjoint components. Then $S\setminus\{a,b,c\}$ has a component $H$ which is a hexagon as shown in Figure \ref{hexagon}. The vertices of $H$ are fixed points of the hyperelliptic involution $\tau$ of $S$, and the edges are arcs of $a$, $b$, $c$, each with the same length given by half the systole length. The vertices will be labeled $1,2,3$ where each of $1,2,3$ represents a fixed point corresponding to two different vertices of $H$. One further fixed point of $\tau$, call it 4, lies in the center of $H$ and $\tau$ induces an involution of $H$. There are also three geodesic $\alpha=4-1$, $\beta=4-2$, $\gamma=4-3$ in $S$ which lie in $H$. These geodesics partition $H$ into six triangles. At least one of these triangles, call it $T$, has an inner angle $\geq \pi/3$ associated to the vertex 4 while the opposite side to this angle has length given by half the length of one of $a,b$ or $c$. Since the sum of the inner angles of $T$ is less than $\pi$ and since the shortest side of $T$ is opposite the smallest inner angle of $T$ a contradiction is obtained. The side of length equal to half of the length of $a$, $b$, or $c$ cannot be the shortest side of $T$.
	\end{proof}
	
	\begin{rem} {\em When we speak of triangles, of hexagons and so on, this is always with respect to the hyperbolic metric.}
	\end{rem}
	
	\begin{prop} \label{prop-star} Let $S$ be such that $\mathrm{sys}(S)$ contains (at least) two elements $a$ and $b$ that intersect. Then the smaller angle between $a$ and $b$ is $\geq \pi/4$. Moreover, equality only holds if $S$ is the Bolza surface.
	\end{prop}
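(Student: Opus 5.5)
The plan is to convert the angle condition into a length condition on a third simple closed geodesic, and then to use the known upper bound for the systole in genus 2.

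\textbf{Setup.} Since $a$ and $b$ are systoles that intersect, the discussion in Subsection \ref{subdefns} shows that they intersect exactly once, at a Weierstrass point. Write $a=1-2$ and $b=2-3$, let $L=\mathrm{syst}(S)$, and let $\theta\le\pi/2$ be the smaller angle between $a$ and $b$ at the point $2$. As in the proof of Lemma \ref{lem-triple}, $a$ and $b$ lie in a $(1,1)$-subsurface $A$. In $A$ there are exactly two simple closed geodesics $c_\pm$ that meet each of $a,b$ once; in homotopy they are the curves $ab$ and $ab^{-1}$.

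\textbf{Length of the short curve.} Each $c_\pm$ is non-separating and invariant under $\tau$. Each passes through the Weierstrass points $1$ and $3$, so it is a geodesic $1-3$. Half of $c_\pm$ is the geodesic arc from $1$ to $3$ that is homotopic, rel endpoints, to the concatenation of the half-arc of $a$ from $1$ to $2$ with the half-arc of $b$ from $2$ to $3$. Each half-arc has length $L/2$. Lifting to $\mathbb{H}^2$, this gives a geodesic triangle with two sides of length $L/2$, and the included angle at $2$ is $\theta$ for one of the two curves and $\pi-\theta$ for the other. Call the first one $c$. The hyperbolic law of cosines gives
\[
\cosh\bigl(L(c)/2\bigr)=\cosh^2(L/2)-\sinh^2(L/2)\cos\theta .
\]

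\textbf{Turning $L(c)\ge L$ into an angle bound.} Since $c$ is a closed geodesic, $L(c)\ge L$. Put $x=\cosh(L/2)$. The inequality $x\le x^2-(x^2-1)\cos\theta$ rearranges to
\[
\cos\theta\le \frac{x^2-x}{x^2-1}=\frac{x}{x+1}.
\]
The right-hand side is increasing in $x$. So an upper bound on $L$ yields a lower bound on $\theta$.

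\textbf{Using the maximal systole.} Now I invoke the theorem of Jenni (reproved in \cite{SchmutzMorse}): in genus 2 the systole is maximised uniquely, up to isometry, by the Bolza surface, where $\cosh(L/2)=1+\sqrt2$. Hence $x\le 1+\sqrt2$, and therefore
\[
\cos\theta\le \frac{1+\sqrt2}{2+\sqrt2}=\frac{1}{\sqrt2},
\]
that is, $\theta\ge\pi/4$. If equality holds, then every inequality in the chain is an equality. In particular $x=1+\sqrt2$, so $L$ is the maximal systole, and by uniqueness $S$ is the Bolza surface.

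\textbf{Expected obstacle.} The main point I would need to check carefully is the identification of $c$. It must be a genuine simple closed geodesic, and its half must be realised by the third side of the triangle with included angle exactly $\theta$, rather than only being bounded by that side. This should follow from $\tau$-invariance: $c$ passes through $1$ and $3$, and the broken path $1\to2\to3$ lifts to a path whose straightening is the lift of the half-geodesic. The other ingredient is the citation of the uniqueness of the maximal-systole surface in genus 2, which is taken from the literature rather than proved here.
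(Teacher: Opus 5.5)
Your proof is correct and follows essentially the same route as the paper. The paper also completes $\{a,b\}$ to a triple $\{a,b,c\}$ with $c=1-3$ and looks at the isosceles triangle whose legs are half-systoles and whose apex angle is the intersection angle; it then uses that the third side (half of $c$) cannot be shorter than the legs, and concludes via the maximal-systole bound $\cosh(L/2)\le 1+\sqrt2$, with equality only for the Bolza surface (cited there as Theorem 5.2 of \cite{SchmutzMaxima}), where your explicit inequality $\cos\theta\le x/(x+1)$ simply makes quantitative the paper's contradiction that $\theta=\pi/4$ together with $\cosh(L/2)<1+\sqrt2$ would make $c$ shorter than a systole.
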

	\begin{proof}
		We may assume that $a=1-2$ and $b=2-3$. Let $c=1-3$ be a geodesic in $S$ such that $\{a,b,c\}$ is a triple. By Lemma \ref{lem-triple} the triple $\{a,b,c\}$ is contained in a $(1,1)$-subsurface $A$ of $S$. Then
		$A\setminus \{a,b,c\}$ contains two connected components given by isometric  triangles; let $T$ be one of them. The vertices of these triangles are the three fixed points 1,2,3 of the hyperelliptic involution of $S$ while the sides are geodesic arcs contained in $a,b$ and $c$. Let $2x$ be the length of $a$ and of $b$ and let $2y$ be the length of $c$. Then the lengths of the sides of $T$ are given by $x$, $x$ and $y$. Let $\gamma$ be the inner angle in $T$ between $a$ and $b$, let $\beta$ and $\gamma$ be the two other inner angles of $T$. Since $a$ and $b$ are systoles, we have $\alpha=\beta\leq\gamma$.
		
		By Theorem 5.2 in \cite{SchmutzMaxima}, we have $\cosh(x)\leq 1+\sqrt{2}$, and if $\cosh(x)= 1+\sqrt{2}$ all three  inner angles of $T$ are $\pi/4$ and $S$ is the Bolza surface.
		
		Let $x<1+\sqrt{2}$ and $\gamma=\pi/4$. Then $y<x$, a contradiction. It follows that $\gamma>\pi/4$, concluding the proof. 
	\end{proof}
	
	\begin{defn} Let $a_{1},\ldots,a_{k}$ be $k$ different geodesics in a surface of genus 2 that all intersect in the same point, and $3\leq k\leq 5$. We say that $\{a_{1},\ldots,a_{k}\}$ is a $k$\textit{-star}.
	\end{defn}
	
	\begin{cor} \label{cor-star} Let $S$ be such that $\mathrm{sys}(S)$ contains a $k$-star, $3\leq k\leq 5$. Then $k=5$ is impossible and $k=4$ implies that $S$ is the Bolza surface.
	\end{cor}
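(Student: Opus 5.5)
The plan is an angle count at the common intersection point, using Proposition \ref{prop-star}. Let $\{a_1,\ldots,a_k\}\subseteq \mathrm{sys}(S)$ be a $k$-star with common point $p$. Distinct systoles that intersect do so in a single Weierstrass point, so $p$ is a fixed point of $\tau$. The $k$ geodesics pass through $p$ with $k$ distinct tangent directions; distinct geodesics cannot be tangent at $p$, since a geodesic is determined by a point and a direction. These $k$ lines divide the full angle $2\pi$ at $p$ into $2k$ sectors, taking both rays of each geodesic.

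Next I bound each sector. Consecutive rays belong to two different geodesics $a_i,a_j$, and the sector between them is one of the two angles formed by $a_i$ and $a_j$ at $p$. That angle is at least the smaller angle between $a_i$ and $a_j$, which is $\geq \pi/4$ by Proposition \ref{prop-star}. Summing over the $2k$ sectors gives $2k\cdot \pi/4\leq 2\pi$, so $k\leq 4$. This rules out $k=5$.

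For $k=4$ the inequality is forced to be an equality: every sector equals exactly $\pi/4$. In particular, for some pair $a_i,a_j$ the smaller angle is exactly $\pi/4$. The equality clause of Proposition \ref{prop-star} then says that $S$ is the Bolza surface.

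The one step needing care is the claim that each of the $2k$ sectors is bounded below by the smaller angle of a pair of distinct systoles. The two rays bounding a sector come from two geodesics, and these must be different: the two rays of the same geodesic are opposite, so they are separated by an angle $\pi$ containing rays of the other geodesics when $k\geq 2$. With that settled, the rest is just the count.
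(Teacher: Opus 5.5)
Your proof is correct and follows essentially the same angle count as the paper. The paper sums the $k$ angles between cyclically consecutive geodesics $a_i,a_{i+1}$ to $\pi$, whereas you sum the $2k$ sectors to $2\pi$, which is equivalent; in both cases the bound and the equality case of Proposition \ref{prop-star} give $k\neq 5$ and force the Bolza surface when $k=4$.
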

	\begin{proof} If $k=5$, one of the angles between $a_{i}$ and $a_{i+1}$, $i=1,...,5$, where $i$ is taken modulo 5, is strictly smaller than $\pi/4$, contradicting Proposition \ref{prop-star}. If $k=4$, then all four angles between $a_{i}$ and $a_{i+1}$, $i=1,...,k-1$, where $i$ is taken modulo 4, are $\pi/4$ and $S$ is the Bolza surface, see Proposition \ref{prop-star}. 
	\end{proof}
	
	\begin{prop} \label{prop-triple} Let $S$ be such that $\mathrm{sys}(S)$ fills and contains a triple. Then $\mathrm{sys}(S)$ has 9 or 12 elements. In the latter case, $S$ is the Bolza surface and the systolic graph is given by the 1-skeleton of an octahedron. If $\mathrm{sys}(S)$ has 9 elements, the systolic graph is given by the 1-skeleton of a triangular prism.
	\end{prop}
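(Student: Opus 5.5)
Suppose $\mathrm{sys}(S)$ fills and contains the triple $\{1-2,2-3,3-1\}$, lying in a $(1,1)$-subsurface $A$ bounded by a separating geodesic $z$. The complement $S\setminus A$ is a second $(1,1)$-subsurface $B$, which contains the Weierstrass points $4,5,6$. The plan is to show that $\mathrm{sys}(S)$ restricted to $B$ is also a triple, and then to classify the geodesics joining the two halves.

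\textbf{Step 1: both halves are triples.} Every element of $\mathrm{sys}(S)$ has pairwise intersection number at most one with the triple, and meets the triple only at Weierstrass points. Any systole other than $1-2,2-3,3-1$ must therefore leave $A$. Since $\mathrm{sys}(S)$ fills, some systoles must cross $z$, and some must fill $B$. A geodesic contained in $B$ is of the form $p-q$ with $p,q\in\{4,5,6\}$. Such a geodesic lies in the interior of $B$, so on its own it does not fill $B$. A crossing geodesic $p-q$ with $p\in\{1,2,3\}$ and $q\in\{4,5,6\}$ meets $B$ in a single arc through one Weierstrass point of $B$.

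To cut $B$ into discs we need curves through all of $4,5,6$. Using the systolic graph, we argue that the vertices $4,5,6$ must be joined by systoles. Once $4-5$ and $5-6$ (say) are systoles, they form a 3-ring after adding $4-6$ if that is also a systole, and Lemma \ref{lem-triple} then makes it a triple. The case in which $4,5,6$ are connected only via crossing edges is ruled out by an angle count at the vertices of the triple: at a vertex of $A$, the two triple edges already span angles $\ge\pi/4$ by Proposition \ref{prop-star}, and Corollary \ref{cor-star} caps the valence of that vertex at 3, or at 4 for the Bolza surface. This Step is the main obstacle. The difficulty is controlling which configurations of crossing systoles are geometrically possible. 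I would try first to show that each vertex of $\{1,2,3\}$ has exactly one crossing edge unless $S$ is Bolza; this uses the valence bound from Corollary \ref{cor-star} together with the planarity of the graph in $S/\tau$.

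\textbf{Step 2: the crossing edges.} With both triples present, the systolic graph is planar in the sphere $S/\tau$ and contains two disjoint triangles $123$ and $456$, separated by the image of $z$. Corollary \ref{cor-star} bounds every vertex degree by 3 in the non-Bolza case. The crossing edges then form a matching between $\{1,2,3\}$ and $\{4,5,6\}$. Filling forces this matching to be perfect, since otherwise some complementary region of $S$ contains an annulus around $z$. A perfect matching between the two triangles gives the triangular prism, with $3+3+3=9$ systoles.

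If some vertex has degree 4, Corollary \ref{cor-star} says $S$ is the Bolza surface. Its symmetry then forces every vertex to have degree 4, and a 4-regular planar graph on 6 vertices containing two disjoint triangles is the octahedron, with 12 edges. This matches the known 12 systoles of the Bolza surface.

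\textbf{Step 3: ruling out intermediate configurations.} It remains to exclude extra crossing edges in the non-Bolza case beyond a perfect matching, and configurations with fewer crossing edges that still fill. The first kind is blocked by the degree bound of 3. For the second, a missing matching edge leaves a complementary region that is not a disc, which we check by drawing the graph in $S/\tau$ and lifting.
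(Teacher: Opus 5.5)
Your Step 1 is a genuine gap, and the tools you propose cannot close it. Proposition \ref{prop-star} and Corollary \ref{cor-star} only bound how many systoles meet at a Weierstrass point. They cannot force systoles to exist inside $B$.

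Concretely, take the triple $\{1-2,2-3,3-1\}$ together with $1-4$, $2-5$, $3-6$ and no systole inside $B$. This configuration fills $S$: in $S/\tau$ the complement is the inner triangle plus a slit disc with no cone point, each lifting to two discs, and $6-12+4=-2$. It also satisfies every valence bound, including your intermediate goal of exactly one crossing edge at each of $1,2,3$. So it can only be excluded by the geometry of $B$.

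The missing idea, which is the core of the paper's proof, is the extremal property of one-holed tori from Section 4 of \cite{SchmutzMaxima}. Among $(1,1)$-surfaces with prescribed boundary length, the systole is maximal exactly for the surface whose systoles form a triple. Since $A$ carries a triple of systoles, it realises this maximum for the boundary $z$. The surface $B$ has the same boundary, and all its geodesics are at least as long as the systoles of $S$. Hence $B$ is isometric to $A$ and contains a triple $\{4-5,5-6,6-4\}$ of systoles.

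Steps 2 and 3 rest on a false claim: filling does not force the matching to be perfect. Two disjoint triangles plus a single crossing edge, say $1-4$, already fill. The two arcs of $1-4$ cut the annulus between the two triangle-cycles into two discs, and $6-14+6=-2$. This is why the paper deduces only ``at least 7 systoles'' from filling. It is also why drawing the graph and lifting cannot exclude 7 or 8 systoles.

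The count $9$ must again come from geometry, once $A\cong B$ is known. Crossing geodesics then come in families of three of equal length for every twist along $z$. The paper obtains this from the curves $\bar a,\bar b,\bar c$, assembled from the perpendicular arcs $t(a)\cup t(a')$ and so on, which have equal length. Equivalently, the order-three rotations of $A$ and $B$, chosen to agree on $z$, glue to an isometry of $S$ that permutes $1,2,3$ and $4,5,6$ cyclically.

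So crossing systoles come in orbits of three, with one edge at each of $1,2,3$. At least one orbit exists because some systole must cross $z$. With degree at most $3$ (Corollary \ref{cor-star}) there is exactly one orbit, which gives the prism with $9$ systoles. A second orbit creates a 4-star, hence the Bolza surface with $12$.
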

	
	\begin{proof} 
		The proof will be divided into three parts, labelled parts 1, 2 and 3 for ease of reference.
		
		1) Let $\{a,b,c\}$ be a triple in $\mathrm{sys}(S)$. By Lemma \ref{lem-triple}, there is a unique separating geodesic $z$ in $S$ that cuts $S$ into two $(1,1)$-subsurfaces $A$ and $B$ such that $a,b,c$ lie in $A$. It follows from Section 4 of \cite{SchmutzMaxima} that $A$ and $B$ are isometric so that $B$ also  contains  a triple of systoles $\{a', b', c'\}$. The idea is that a triple of systoles in $A$ represents the global maximum of $syst$ for the surface $A$ analogous to the Bolza surface for the genus 2 surface. Then $A$ and $B$ must be isometric because otherwise there would be a geodesic in $B$ shorter than the systoles.
		
		Since $\mathrm{sys}(S)$ fills, there must also be a systole of $S$ which intersects $z$ so that $\mathrm{sys}(S)$ has at least 7 systoles.
		
		2) To each geodesic $p$ in $A$, there is a unique geodesic segment $t(p)$ in $A$ disjoint from  $p$ with endpoints on $z$, making an angle of $\pi/2$ with $z$ at the intersection points. Moreover, it follows from hyperbolic trigonometry that if $p$ and $q$ are two geodesics in $A$ with $L(p)>L(q)$, then $L(t(p))>L(t(q))$ (where $L$ stands for "length"). Since $a$, $b$, $c$ are systoles in $A$, $t(a)$, $t(b)$, $t(c)$ all have the same length and are strictly shorter than $t(p)$ whenever $p\not\in\{a,b,c\}$ is a geodesic in $A$. The same holds for $B$.
		
		\begin{rem}
			\label{passthrough}
			Note that, by symmetry, $t(a)$, $t(b)$ and $t(c)$ each pass through a Weierstrass point.
		\end{rem}
		
		Let $S_{1}$ be the surface of genus 2 obtained from $S$ by a twist deformation along $z$ such that $\bar{a}=t(a)\,\cup\,t(a')$ forms a geodesic in $S_{1}$. Then $\bar{b}=t(b)\,\cup\,t(b')$ and $\bar{c}=t(c)\,\cup\,t(c')$ also form geodesics in $S_{1}$ (where we may exchange the notation of $b'$ and $c'$, if necessary). By construction, $\bar{a}$, $\bar{b}$, $\bar{c}$ all have the same length and are the shortest geodesics of $S_{1}$ that intersect $z$. Note that $\bar{a}$, $\bar{b}$, and $\bar{c}$ are pairwise disjoint.
		
		If $\bar{a}$, $\bar{b}$, and $\bar{c}$ are systoles of $S_{1}$, the length of $t(a)$ must be half the length of $a$. Denote by $2x$ the length of $a$ and $2y$ the length of $t(a)$. Then 
		\begin{equation}\label{eq-1}
			\sinh(y)\sinh(\zeta/2)=\cosh(x)
		\end{equation}
		where $2\zeta$ is the length of $z$. By Corollary 4.1 in \cite{SchmutzMaxima}, we have 
		\begin{equation}\label{eq-2}
			4\cosh^{3}x-6\cosh{2}(x)+1=\cosh(\zeta).
		\end{equation}
		It follows from Equations  (\ref{eq-1}) and (\ref{eq-2}) that $y$ decreases if $x$ increases. Moreover, if $x\rightarrow \infty$, then $y\rightarrow 0$. On the other hand, if $\zeta\rightarrow 0$, then $y\rightarrow\infty$. Hence there is a unique $x$ such that $y=2x$. We call the corresponding surface $S_{2}$. Note that, in $S_{2}$, through every fixed point of the hyperelliptic involution $\tau$, exactly three of the nine systoles pass.
		
		3) By construction in part 2, $\mathrm{sys}(S_{2})$ has exactly 9 elements with systolic graph given by the 1-skeleton of a triangular prism. By part 2 of the argument, we may increase the length of $x$ in $S_{2}$, thereby decreasing the length of $y$. This can be compensated by a twist deformation along $z$. We thereby obtain a one-parameter family of surfaces, determined by $x$, which all have exactly 9 systoles. 
		This can be continued as long as the number of systoles does not increase. If there is a further systole, we obtain a 4-star of systoles, hence the corresponding surface is the Bolza surface by Corollary \ref{cor-star}.  
	\end{proof}
	
	\begin{defn} We call $M_{9}$ the surface $S_{2}$ constructed during the proof of Proposition \ref{prop-triple}. 
	\end{defn}
	It was shown in \cite{SchmutzMorse}, Theorem 44 that $M_{9}$ is a critical point of $syst$ of index 5.
	
	\begin{cor} \label{cor-triple}  This corollary has two parts.
		\begin{enumerate}
			\item{Let $2x$ be the length of a systole in the Bolza surface. Then $\cosh(x)=1+\sqrt{2}\,\sim 2.41$.}
			\item{Let $2x$ the length of a systole in the surface $M_{9}$. Then $\displaystyle \cosh(x)=\,\frac{5+\sqrt{17}}{4}\,\sim 2.28$.}
		\end{enumerate}
	\end{cor}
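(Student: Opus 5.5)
For part (1) I will work in the triangle $T$ from the proof of Proposition \ref{prop-star}. At the Bolza surface, that proof (via Theorem 5.2 of \cite{SchmutzMaxima}) shows that all three inner angles of $T$ equal $\pi/4$. An equiangular hyperbolic triangle is equilateral, so all three sides of $T$ have length $x$, half the systole length. The dual hyperbolic law of cosines gives
\begin{equation*}
\cosh(x)=\frac{\cos\frac{\pi}{4}+\cos^{2}\frac{\pi}{4}}{\sin^{2}\frac{\pi}{4}}=\frac{\frac{\sqrt2}{2}+\frac12}{\frac12}=1+\sqrt2 .
\end{equation*}
This agrees with the bound $\cosh(x)\le 1+\sqrt2$ already quoted there, so part (1) is essentially immediate.

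For part (2) I will use the two relations (\ref{eq-1}) and (\ref{eq-2}) from part 2 of the proof of Proposition \ref{prop-triple}, together with the closing condition defining $M_9$. Write $u=\cosh(x)$. First I eliminate $\zeta$. From (\ref{eq-2}),
\begin{equation*}
\sinh^{2}(\zeta/2)=\frac{\cosh\zeta-1}{2}=\frac{4u^{3}-6u^{2}}{2}=u^{2}(2u-3).
\end{equation*}
Substituting into (\ref{eq-1}) gives $\sinh^{2}(y)\,(2u-3)=1$.

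Next I impose the defining condition of $M_9$. This says that the new geodesic $\bar a=t(a)\cup t(a')$ has the same length $2x$ as $a$, so each $t(a)$ has length $x$. Relation (\ref{eq-1}) comes from a right-angled piece (a pentagon or quadrilateral cut out by $z$, $a$ and $t(a)$, using Remark \ref{passthrough}). In that relation the quantity $y$ appearing inside $\sinh$ is half the length of $t(a)$, so the closing condition reads $y=x/2$.

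Then $\sinh^{2}(x/2)=(u-1)/2$, and the equation becomes $(u-1)(2u-3)=2$, that is $2u^{2}-5u+1=0$. The admissible root, with $u>3/2$ so that $\cosh\zeta>1$, is $u=(5+\sqrt{17})/4\approx 2.28$. Uniqueness of this solution matches the monotonicity argument already given in part 2 of that proof.

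The main obstacle is the bookkeeping of which length the variable $y$ in (\ref{eq-1}) represents. The text is ambiguous between "$t(a)$ has length $2y$" and "$y=2x$", so the closing condition must be translated correctly. I would settle this by rederiving (\ref{eq-1}) from the right-angled pentagon formula, checking that $y$ is the distance from $z$ to the Weierstrass point on $t(a)$, i.e.\ half of $t(a)$. As a consistency check, the alternative reading $y=x$ gives the cubic $2u^{3}-3u^{2}-2u+2=0$. Its root lies outside the range allowed by part 1, since $M_9$ must have shorter systoles than the global maximum Bolza, which confirms the choice $y=x/2$.
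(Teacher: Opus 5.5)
Your proposal is correct and is the paper's argument made explicit. Part (1) is the equilateral $\pi/4$-triangle computation; the $\pi/4$ angles at the Bolza surface are forced by Corollary~\ref{cor-star}, since four systoles pass through each Weierstrass point. Part (2) eliminates $\zeta$ from (\ref{eq-1}) and (\ref{eq-2}) under the closing condition $y=x/2$, giving $2u^{2}-5u+1=0$; the paper's ``$y=2x$'' is a typo for $x=2y$.

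Your final consistency check is wrong, though harmless. The admissible root of $2u^{3}-3u^{2}-2u+2=0$ is $u\approx 1.745<1+\sqrt{2}$, so part (1) does not exclude it. Proposition~\ref{prop-65} does exclude it: a Hamiltonian cycle of the prism is a 6-ring of systoles, which forces $\cosh(x)\ge 2$. In any case your pentagon derivation already settles $y=x/2$ on its own.
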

	\begin{proof}Part (1) of the corollary is well-known, see also the proof of Proposition \ref{prop-star} above. The second part is a consequence of Equations (\ref{eq-1}) and  (\ref{eq-2}). 
	\end{proof}
	
	Denote by $i(a,b)$ the number of intersections between the geodesics $a$ and $b$.
	
	\begin{defn}
		Let  $a,b,a'$ and $b'$ be four geodesics such that $i(a,b)=i(a',b')=1$ and $i(a,a')=i(a,b')=i(b,a')=i(b,b')=0$. We say that $\{a,b\}$,$\{a',b'\}$ are disjoint pairs.
	\end{defn}
	\begin{defn}
		Let $a_{1},\ldots,a_{k}$ be $k$ distinct geodesics, $3\leq k\leq 5$, such that $a_{i}$ intersects (once) only $a_{i-1}$ and $a_{i+1}$, $i=2,\ldots,k-1$, while $a_{1}$ intersects only $a_{2}$, $a_{k}$ intersects only $a_{k-1}$. We then say that $\{a_{1},\ldots,a_{k}\}$ is a $k$-chain.
	\end{defn}
	\begin{defn}
		Let $a_{1},\ldots,a_{k}$ be $k$ distinct geodesics, $4\leq k\leq 6$, such that $\{a_{1},\ldots,a_{k-1}\}$ is a $k-1$-chain while $a_{k}$ intersects only $a_{k-1}$ and $a_{1}$. We say that $\{a_{1},\ldots,a_{k}\}$ is a $k$-ring.
	\end{defn}
	
	\begin{lem} \label{lem-4ring} A 4-ring in $S$ either fills or is contained in a unique $(1,2)$-subsurface of $S$.
	\end{lem}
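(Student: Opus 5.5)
Let the 4-ring be $\{a_1,a_2,a_3,a_4\}$. Each $a_i$ meets exactly $a_{i-1}$ and $a_{i+1}$ (indices mod 4), each once, and is disjoint from the opposite curve $a_{i+2}$. The plan is to take a regular neighbourhood $N$ of $a_1\cup a_2\cup a_3\cup a_4$ and identify it topologically. Then we sort the complementary components of $N$ in $S$: either they are all discs, in which case the ring fills, or a component is non-trivial, which will force the $(1,2)$-subsurface.

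First we compute the Euler characteristic of the union. The union $\Gamma=\bigcup a_i$ is a 4-valent graph with 4 vertices (the four intersection points) and 8 edges, so $\chi(N)=\chi(\Gamma)=-4$.

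Next we count boundary components of $N$, which requires following the ribbon structure. Near each crossing the ribbon structure is forced, since transverse crossings of simple curves look locally like a plus sign. The global structure depends only on whether the curves are two-sided, and they are, because $S$ is orientable. So $N$ is determined up to homeomorphism by the cyclic combinatorics. Tracing boundary paths, as one does for a chain of curves, shows that $N$ has genus 1 with 2 boundary components, i.e. it is homeomorphic to $\Sigma_{1,2}$ with $\chi=2-2-2=-2$. That contradicts $\chi(N)=-4$, so the vertex/edge count must be redone. Each $a_i$ carries 2 intersection points, hence 2 edges, giving 8 edges, while there are only 4 vertices, giving $\chi=4-8=-4$. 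The correct model is therefore genus $g$ with $b$ boundary components and $2-2g-b=-4$. Tracing boundaries as for a ring of annuli plumbed cyclically, which generalizes the $k$-chain computation, the plan is to verify $b=2$ and $g=2$. We expect this boundary trace to be the main obstacle. Concretely, one checks the cyclic plumbing of four annuli: each plumbing of two annuli at one point, chained around a cycle of even length 4, yields two boundary curves. This follows from the standard fact that a cyclic plumbing of $k$ annuli has 2 boundary components when $k$ is even and 1 when $k$ is odd, with orientations matching.

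With $N\cong\Sigma_{2,2}$ inside the closed genus 2 surface $S$, each boundary component bounds a disc, an annulus or a more complicated piece. There are two cases.
\begin{enumerate}
\item Both complementary pieces are discs. Then the ring fills.
\item The two boundary curves of $N$ are isotopic in $S$, cobounding an annulus $R$. Then $N\cup R$ is closed of genus 3, which is absurd; so this cannot occur in this form.
\end{enumerate}
This suggests that the second alternative should instead be handled by tightening. Some boundary curve of $N$ may be inessential or peripheral, and one works with the geodesic representatives. The $(1,2)$-subsurface arises when the union of the ring is homotopic into a subsurface $Y$ with $\chi(Y)=-2$. That happens exactly when the geodesic hull of the ring is not all of $S$: its complement is then a union of annuli, which must consist of one annulus around a single separating or non-separating essential curve $z$ disjoint from all $a_i$. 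Cutting $S$ along a non-separating $z$ gives the $(1,2)$-subsurface, and a separating $z$ is excluded because $a_1,a_2$ would then lie in one $(1,1)$-piece together with $a_3$, forcing intersections there.

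Uniqueness follows because the subsurface is the essential hull of the ring, namely the complement of the unique geodesic disjoint from all $a_i$. Two distinct non-separating curves disjoint from the filling-in-$Y$ set would both be peripheral in $Y$, and $Y$'s two boundaries are both $z$.
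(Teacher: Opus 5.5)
Your proposal has a genuine gap, and its ribbon-graph part is actually wrong. You claim that the regular neighbourhood $N$ of the ring is determined by the cyclic combinatorics, and you invoke a ``standard fact'' that a cyclic plumbing of an even number of annuli has two boundary curves. Both claims are false.

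For a chain the intersection graph is a tree, so you can reorient the curves to make all local intersection signs agree, and the ribbon structure is forced. Around a cycle you cannot do this. Orient the $a_i$ and set $\epsilon_i=a_i\cdot a_{i+1}$ (indices mod 4). Reversing $a_i$ flips both $\epsilon_{i-1}$ and $\epsilon_i$, so $\epsilon_1\epsilon_2\epsilon_3\epsilon_4$ is an invariant, and both values occur. The intersection matrix of $a_1,\dots,a_4$ has Pfaffian $\epsilon_1\epsilon_3-\epsilon_2\epsilon_4$.
\begin{itemize}
\item When the product is $-1$, the Pfaffian is $\pm 2$. Then $N$ has genus $2$, so $N\cong\Sigma_{2,2}$, its complement is two discs, and the ring fills (left of Figure \ref{4rings}).
\item When the product is $+1$, the Pfaffian vanishes and $N\cong\Sigma_{1,4}$. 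Its complement is two discs and an annulus, which is the non-filling ring on the right of Figure \ref{4rings}.
\end{itemize}
Your computation sees only the first case, so it would prove that every 4-ring fills. The contradiction you run into in case (2) is a symptom of this, not a minimal-position issue. Distinct geodesics meeting pairwise at most once are already in minimal position, so there is nothing to tighten.

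Your final paragraph then simply asserts the lemma. That the complement of the hull is a single annulus about one curve $z$ is exactly what has to be shown, and you give no reason for it.

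The hull route can be repaired as follows. The subsurface $Y$ filled by the ring has genus $\ge 1$, since $i(a_1,a_2)=1$. It is not a one-holed torus, since $a_1$ and $a_3$ are disjoint and distinct. The only remaining connected essential subsurfaces of a closed genus 2 surface are $S$ itself and the $(1,2)$-subsurfaces $S\setminus w$.

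The paper argues instead via the 3-chain $\{a_1,a_2,a_3\}$. Cutting along $a_1$ and $a_3$ gives a four-holed sphere, in which the two arcs of $a_2$ are separated by a curve $w$. So the 3-chain lies in, and fills, the unique $(1,2)$-subsurface $S\setminus w$. Then $a_4$ either misses $w$, giving the second alternative, or crosses it. In the latter case $a_4$ cuts the annulus $S\setminus(a_1\cup a_2\cup a_3)$ into discs, and the ring fills.

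Either way, uniqueness follows because any $(1,2)$-subsurface containing the ring must contain this filled subsurface.
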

	
	\begin{proof}
		Let $\{a,b,c\}$ be a 3-chain of geodesics in $S$. Cut $S$ along $a$ and $c$, the result is a $(0,4)$-subsurface $D$ with boundary geodesics $a_{1}$, $a_{2}$, $c_{1}$, $c_{2}$. As $b$ intersects $a$ and $c$, each in exactly one point, $b\cap D$ has two connected components; one (call it $b_{1}$) with endpoints on $a_{1}$ and $c_{1}$ and the other, call it $b_{2}$ with endpoints on $a_{2}$ and $c_{2}$. Then as shown in Figure \ref{Tshirt} there is a geodesic in $D$ separating $b_{1}\cup a_{1}\cup c_{1}$ and $b_{2}\cup a_{2}\cup c_{2}$.  We therefore have shown that there is a unique $(1,2)$-subsurface $C$ of $S$ which contains $\{a,b,c\}$. 
		
		\begin{figure}[!thpb]
			\centering
			\includegraphics[width=0.4\textwidth]{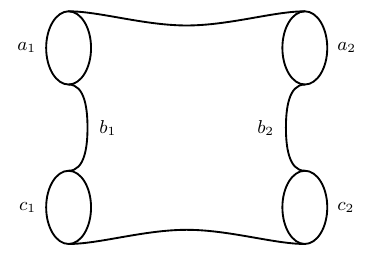}
			\caption{The subsurface $D$ from the proof of Lemma \ref{lem-4ring}. }
			\label{Tshirt}
		\end{figure}
		
		\begin{figure}[!thpb]
			\centering
			\includegraphics[width=0.9\textwidth]{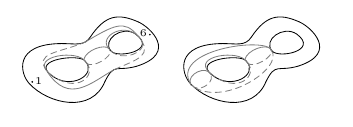}
			\caption{On the left is a filling 4-ring and on the right a 4-ring that does not fill.}
			\label{4rings}
		\end{figure}
		
		Let now $\{a,b,c,d\}$ be a 4-ring of geodesics in $S$. If $d$ is in $C$ (which is possible), then we have the second case of the lemma. If $d$ intersects the boundary geodesics of $C$ 
		(which is also possible), then $\{a,b,c,d\}$ fills $S$. These two possibilities are illustrated in Figure \ref{4rings}.
	\end{proof}
	
	\begin{prop} \label{prop-4ring} 
		Let $S$ be such that $\mathrm{sys}(S)$  contains a 4-ring that fills. Then $S$ is the Bolza surface and $\mathrm{sys}(S)$ has 12 elements.
	\end{prop}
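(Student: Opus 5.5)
We work in the quotient sphere $S/\tau$. Write the filling 4-ring as $\{a,b,c,d\}$ with $a=1-2$, $b=2-3$, $c=3-4$, $d=4-1$. Its image in $S/\tau$ is a closed curve through the cone points $1,2,3,4$. By the Jordan curve theorem it bounds two discs, which contain the remaining Weierstrass points $5,6$ between them.

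\textbf{Step 1: locating $5$ and $6$.} A disc in $S/\tau$ containing two cone points lifts to an annulus, not a disc. In that case the 4-ring lies in a $(1,2)$-subsurface, which is the non-filling alternative of Lemma \ref{lem-4ring}. So, since the 4-ring fills, each disc contains exactly one cone point. We may say the two components of $S\setminus\{a,b,c,d\}$ are discs $O_5\ni 5$ and $O_6\ni 6$.

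Each $O_j$ is the double cover of a quadrilateral, branched at $j$. Hence $O_j$ is a $\tau$-invariant hyperbolic octagon. Its vertices are the points $1,2,3,4$, each appearing twice. Every side is half of a systole, so every side has length $x$, where $2x=\mathrm{syst}(S)$.

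\textbf{Step 2: triangulating $O_5$ from its centre.} Join $5$ to the eight vertices by geodesic arcs. These are halves of the closed geodesics $5-1$, $5-2$, $5-3$, $5-4$, which are simple and non-separating. Since $a,b,c,d$ are systoles, each of these arcs has length $r_k\ge x$. This cuts $O_5$ into eight triangles. Each triangle has two sides $r,r'\ge x$ from $5$, opposite side $x$, and an angle $\theta_i$ at $5$, with $\sum_i\theta_i=2\pi$.

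By the hyperbolic law of cosines,
\[
\cosh x=\cosh r\cosh r'-\sinh r\sinh r'\cos\theta .
\]
With the opposite side fixed at $x$, the angle $\theta$ decreases as $r,r'$ grow. So $\theta$ is maximal in the equilateral case $r=r'=x$, where
\[
\cos\theta_{\max}=\frac{\cosh x}{1+\cosh x}.
\]
I expect the main obstacle to be making this monotonicity rigorous, i.e. checking that it holds on the whole range $r,r'\ge x$. The way I would attack it is to differentiate the law of cosines in $r$ for fixed opposite side and show $\partial\theta/\partial r<0$ when $r'\ge x$.

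\textbf{Step 3: pinching $x$.} From $2\pi=\sum\theta_i\le 8\theta_{\max}$ we get $\theta_{\max}\ge\pi/4$, hence
\[
\frac{\cosh x}{1+\cosh x}\le\frac{\sqrt2}{2},\qquad\text{that is,}\qquad \cosh x\ge 1+\sqrt2 .
\]
On the other hand, $a$ and $b$ are intersecting systoles, so Theorem 5.2 of \cite{SchmutzMaxima}, as used in the proof of Proposition \ref{prop-star}, gives $\cosh x\le 1+\sqrt2$. Therefore equality holds throughout. In particular all $\theta_i=\pi/4$ and every $r_k=x$.

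\textbf{Step 4: conclusion.} Since every $r_k=x$, the four geodesics $5-1,\dots,5-4$ have length $2x$ and are systoles. They form a 4-star at the point $5$, so Corollary \ref{cor-star} shows that $S$ is the Bolza surface. (Equality in Proposition \ref{prop-star} gives the same conclusion.) Since $S$ is Bolza, $\mathrm{sys}(S)$ has 12 elements, with systolic graph the 1-skeleton of an octahedron, by Proposition \ref{prop-triple}.
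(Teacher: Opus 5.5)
\textbf{There is a genuine gap in Step 3: the inequality runs the wrong way.} From $\frac{\cosh x}{1+\cosh x}\le\frac{\sqrt2}{2}$ you get $\cosh x\,(1-\tfrac{\sqrt2}{2})\le\tfrac{\sqrt2}{2}$, that is, $\cosh x\le 1+\sqrt2$, not $\ge$. As a sanity check, equilateral triangles have smaller angles the larger they are, so $\theta_{\max}\ge\pi/4$ says precisely that $x$ is at most its Bolza value. Step 3 therefore just reproves the upper bound of Theorem 5.2 of \cite{SchmutzMaxima}. Both bounds point the same way, nothing is pinched, and the conclusions $\theta_i=\pi/4$ and $r_k=x$ do not follow. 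The root cause is that your only global constraint is that the central angles sum to $2\pi$, and that alone does not force equilateral triangles. Separately, the derivative argument you planned for Step 2 fails as stated. The law of cosines gives $\partial_r\cos\theta=(\cosh x\cosh r-\cosh r')/(\sinh^2 r\,\sinh r')$, which is negative when $r'$ is close to $r+x$ (for instance $r=x$, $r'$ near $2x$). So $\theta$ is not decreasing in $r$ on the whole region $r,r'\ge x$.

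\textbf{The missing ingredient, which the paper uses, is the angle sum at the vertices of the octagons.} The total angle around each of the Weierstrass points $1,2,3,4$ is $2\pi$. Hence, triangulating both $O_5$ and $O_6$ from their centres, the sixteen base angles of the triangles sum to exactly $8\pi$. Equivalently, by Gauss--Bonnet, an octagon of area $2\pi$ has angle sum $6\pi-2\pi=4\pi$. The paper writes ``exactly $2\pi$'' at this point, but $4\pi$ is what is meant and what its argument needs. In each triangle the side of length $x$ is the shortest, so the central angle is the smallest. Summing over all sixteen triangles, the base angles total at least twice the central angles, namely $2\cdot 4\pi=8\pi$. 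Equality forces every triangle to be equiangular, hence equilateral, so every $r_k=x$, and your Step 4 then finishes the proof. This route uses only ``shortest side opposite smallest angle''. It needs neither the monotonicity of Step 2 nor the bound from \cite{SchmutzMaxima}.
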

	\begin{proof}Let $\{a,b,c,d\}\in \mathrm{sys}(S)$ be a 4-ring in $S$ that fills. There are then 2 Weierstrass points $p_{1}$ and $p_{2}$ not contained in any of the geodesics $\{a,b,c,d\}$. Since $\{a,b,c,d\}$ fill, $p_{1}$ and $p_{2}$ must be in two different connected components of $S\setminus \{a,b,c,d\}$. Recall that all crossings of the geodesics  $\{a,b,c,d\}$ occur at the Weierstrass points, so the vertices of the polygons $S\setminus \{a,b,c,d\}$ are all on the Weierstrass points. Cutting $S$ along the 3-chain given by $\{a,b,c\}$ gives an annulus $A$ with $p_{1}$ and $p_{2}$ in the interior, and for which $A\cap d$ has two connected components, and cutting $A$ along $d$ gives two isometric octagons with all sides of length $x$. This is shown in the left side of Figure \ref{4rings} with $p_{1}=1$ and $p_{2}=6$.
		
		Let $E$ be one of the octagons of $S\setminus \{a,b,c,d\}$. The hyperelliptic involution $\tau$ induces an involution of $E$ and fixes the center of $E$ which we call 1. Two opposite vertices of $E$ represent the same fixed point of $\tau$; we assume that they represent $2,3,4,5$. The geodesic segments  in $E$ between two opposite vertices are  geodesics $1-2$, $1-3$, $1-4$, and $1-5$ in $S$. They partition $E$ into 8 triangles. Since the sides of $E$ correspond to systoles in $S$, in each of these 8 triangles, the smallest inner angle is that in 1, in other words that which is opposite to a side of $E$. It follows that the sum of all inner angles of all 8 triangles is at least $6\pi$. Therefore, the sum of the 8 inner angles of $E$ is at least $4\pi$. But since the area of $E$ is half of the area of $S$, the sum of the inner angles of $E$ is exactly $2\pi$. This implies that each of the 8 triangles in $E$ is equilateral. Therefore, the geodesics $1-2$, $1-3$, $1-4$, $1-5$ are also systoles and $S$ is the Bolza surface by Proposition \ref{prop-triple}. 
	\end{proof}
	
	\begin{lem} \label{lem-pair} Suppose $\{a,b\}$, $\{a',b'\}$ are disjoint pairs of geodesics in $\mathrm{sys}(S)$. Then there is a separating geodesic $z$ in $S$ that separates $S$ into two $(1,1)$-subsurfaces $A$ and $A'$, where $A$ contains $a$ and $b$ and $A'$ contains $a'$ and $b'$. The two subsurfaces are isometric and there are involutions $\psi$ and $\psi'=\psi\circ \tau$ of $S$ that exchange the two subsurfaces. 
	\end{lem}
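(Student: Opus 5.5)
Consider the $(1,1)$-subsurface $A$ determined by the intersecting pair $\{a,b\}$: a regular neighbourhood of $a\cup b$ is a one-holed torus whose boundary is homotopic to a unique separating simple closed geodesic $z$, and the same construction for $\{a',b'\}$ gives a one-holed torus $A'$. Since $a',b'$ are disjoint from $a$ and $b$, the curve $a'\cup b'$ can be isotoped off a neighbourhood of $a\cup b$. So $a'$ and $b'$ lie in the complement of $A$, which is a $(1,1)$-subsurface $B$. In genus 2 there is only one separating geodesic in the complement of a one-holed torus, namely its boundary, so the pair $\{a',b'\}$ determines the same $z$. Hence $z$ separates $S$ into $A\supset\{a,b\}$ and $A'=B\supset\{a',b'\}$, which gives the first assertion.

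For isometry I would follow the argument of part 1) of Proposition \ref{prop-triple}, using the geometry of one-holed tori with fixed boundary length $2\zeta$. On $A$ the geodesics $a,b$ have equal length and form a pair intersecting once. Comparing with $A'$, I would use the segments $t(p)$ from part 2) of that proof. If $A$ and $A'$ were not isometric, I would try to produce a curve shorter than the systole by gluing. The cleaner route is a symmetry argument: a one-holed torus of boundary length $2\zeta$ in which some pair of curves meeting once both have a given length $\ell$ is unique up to isometry once the angle between them is fixed.

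So I would show more directly that the systole condition forces the parameters to match. Both $A$ and $A'$ contain shortest curves of length $\ell=\mathrm{syst}(S)$. Each one-holed torus has its own minimal curve length subject to boundary $2\zeta$, and a deformation argument (as in \cite{SchmutzMaxima}, Section 4) shows that if $A$ were not symmetric, one could increase the systole. I expect to invoke that same section of \cite{SchmutzMaxima} that was cited to get isometry in Proposition \ref{prop-triple}. This step is the main obstacle. One cannot conclude abstractly that two one-holed tori sharing a systole length and a boundary length are isometric, since each has a 2-parameter moduli for fixed boundary. Here I would use that within $A$ the curves $a$ and $b$ are both systoles, so the torus lies on the one-parameter locus where the two shortest curves have equal length. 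I would then argue, as the paper does via \cite{SchmutzMaxima}, that equal-length pairs with a common boundary length give isometric tori, possibly with a mirror. This yields an isometry $\phi\colon A\to A'$ taking $\{a,b\}$ to $\{a',b'\}$.

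Finally, to build $\psi$, extend $\phi$ across $z$. The isometry $\phi$ restricted to $z$ differs from the gluing by a rotation of $z$. On each one-holed torus, the elliptic involution (the restriction of $\tau$) rotates $z$ by half its length. Because $\tau$ acts on both sides, the twist parameter along $z$ is compatible with swapping the two sides, after possibly composing $\phi$ with $\tau|_{A}$. This gives an isometric involution $\psi$ exchanging $A$ and $A'$, with $\psi^2$ an isometry fixing each side that can be arranged to be the identity. Then $\psi'=\psi\circ\tau$ is another such involution, since $\tau$ is central and preserves both $A$ and $A'$.
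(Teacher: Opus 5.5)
Your identification of $z$ as the common boundary of the one-holed tori filled by $\{a,b\}$ and by $\{a',b'\}$ is fine and agrees with the paper. The isometry step, however, has a real gap. The maximality/deformation reasoning you borrow from Proposition \ref{prop-triple} does not apply here. In that proof it is the \emph{triple} of systoles that makes $A$ the extremal one-holed torus of \cite{SchmutzMaxima}. Here $S$ is an arbitrary surface on which $a,b,a',b'$ are systoles, so showing that ``one could increase the systole'' gives no contradiction. No angle condition is needed either.

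The missing idea is the elementary Fenchel--Nielsen argument the paper uses:
\begin{itemize}
\item Cutting $S$ along $z$, $a$, $a'$ gives two pairs of pants, each with boundary lengths $2\zeta,2x,2x$, hence isometric.
\item $A$ and $A'$ are then determined by the twist parameters along $a$ and $a'$.
\item $L(b)$ is a strictly convex function of the twist along $a$, symmetric about its minimum, and likewise $L(b')$ along $a'$.
\item So $L(b)=L(b')=2x$ forces the two twists to agree up to sign.
\end{itemize}
This is the precise content of your ``one-parameter locus'' remark: for fixed $\zeta$, the common length $2x$ pins the torus down up to mirror image.

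Your construction of $\psi$ is also not correct as written. Composing $\phi$ with $\tau|_A$ only changes $\phi|_z$ by a half-turn of $z$, so it cannot absorb an arbitrary twist along $z$. The claim that $\psi^2$ ``can be arranged'' to be the identity is therefore unsupported.

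In fact the twist along $z$ is irrelevant, for an orientation reason. Suppose $\phi\colon A\to A'$ preserves the orientations induced from $S$. Then it sends the boundary orientation of $z$ coming from $A$ to the opposite boundary orientation coming from $A'$. So $\phi|_z$ is a reflection of the circle $z$, hence an involution with two fixed points. It follows that $\psi=\phi$ on $A$, $\psi=\phi^{-1}$ on $A'$ is well defined along $z$ for every twist. It is an isometry, it satisfies $\psi^2=\mathrm{id}$, and its fixed points are the two fixed points on $z$ (the paper's $f_1,f_2$).

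You do need such an orientation-preserving $\phi$, because the twists may have opposite signs, in which case your isometry from the previous step is a mirror. This is fixed by noting that a one-holed torus with $L(a)=L(b)$ admits an orientation-reversing isometry swapping $a$ and $b$. It is therefore orientation-preservingly isometric to its mirror image. The paper instead builds $\psi$ explicitly from the symmetric hexagons $K$, $K'$ and the midpoints $f_1,f_2$ on $z$. Your final remark, that $\psi\circ\tau$ is again an involution because $\tau$ is central, is correct.
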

	\begin{proof}
		We first show the existence of $z$, $A$ and $A'$. The geodesic $z$ is on the boundary of the subsurface filled by $a$ and $b$ or, equivalently $a'$ and $b'$. Cutting $S$ along $z$, $a$ and $a'$ gives two isometric pants. The lengths of $b$ and $b'$ are only equal in $S$ if the twist parameters around $a$ and $a'$ are equal up to sign. This gives the isometric surfaces $A$ and $A'$. 
		
		Denote by $2x$ the length of $a$ and by $2\zeta$ the length of $z$.  Cut $A$ along $a$; the result is a pant that can be decomposed into two isometric copies of a right-angled hexagon $H$ in the usual way. The hexagon $H$ is determined by the edge lengths $x$ and $\zeta$. Similarly when we cut $A'$ along $a'$. Let $s$ be the side of $H$ opposite the side on the geodesic $z$. One fixed point of the hyperelliptic involution $\tau$ of $S$ lies in the center of $s$, call it 1. Let 2 be the fixed point of $\tau$ that is the intersection point of $a$ and $b$. Then 2 lies on one of the sides of $H$ on $a$. Again, we have the same situation with respect to $A'$. Therefore, there is a geodesic $c$ in $A$ and a geodesic $c'$ in $A'$ such that $\{a,b,c\}$ and $\{a',b',c'\}$ are triples where $c$ and $c'$ have the same length $2y$. This is illustrated in Figure \ref{Hexfig}.
		
		\begin{figure}[!thpb]
			\centering
			\includegraphics[width=0.5\textwidth]{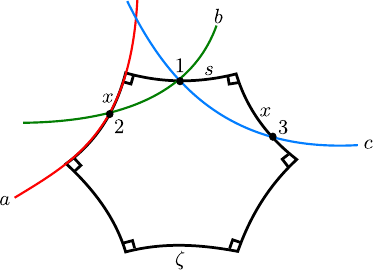}
			\caption{ The hexagon $H$ from the proof of Lemma \ref{lem-pair} with the geodesics $a$ (in red), $b$ (in green) and $c$ (in blue) in the universal cover.}
			\label{Hexfig}
		\end{figure}
		
		Now cut $A$ along $c$. Again, the result is a pair of pants consisting of two isometric copies of a right-angled hexagon $K$. We call $y$ the two sides of $K$ lying along the geodesic $c$ and $\zeta$ the side of $K$ lying along $z$. Let $t$ be the side of $K$ opposite the side $\zeta$. For ease of notation we will use the same symbol for an edge as for its length. The fixed point 1 lies on the center of $t$ while the fixed points 2 and 3 of $\tau$ lie on the center of the sides $y$.
		The two remaining sides of $K$ have the same length and will be labelled $p$. 
		
		Cutting $A'$ along $c'$ produces the same situation with a right-angled hexagon $K'$ with two sides $c'$, two sides $p'$, one side $t'$ and on side $\zeta'$, each with the same length as the corresponding side of $K$. Moreover, the fixed points 4,5,6 of $\tau$ lie on the corresponding places; more precisely, 4 lies on the center of $t'$.
		
		Denote by $p_{1}$, $p_{2}$ the two vertices of $K$ that are endpoints of $\zeta$ and denote by $p'_{1}$, $p'_{2}$ the corresponding vertices of $K'$. Then $p_{1}$, $p_{2}$, $p'_{1}$, $p'_{2}$ lie on $z$ in $S$. Let $f_{i}$ be the midpoints on $z$ of the arc connecting $p_{i}$ and $p'_{i}$, $i=1,2$. It follows by construction that there is an involution $\psi$ of $S$ with fixed points $f_{1}$ and $f_{2}$. Moreover, there is an involution $\psi'=\psi\circ\tau$ of $S$ which has the two fixed points $f'_{1}$, $f'_{2}$ on $z$, corresponding to the centers on $z$ between $p_{1}$ and $p_{2}'$ and between $p_{2}$ and $p_{1}'$ respectively. This is illustrated in Figure \ref{involution}. By construction, both $\psi$ and $\psi'$ exchange 1 and 4.  We fix the notation such that $\psi$ and $\psi'$ exchange 2 and 6 as well as 3 and 5. 
	\end{proof}
	
	\begin{figure}[!thpb]
		\centering
		\includegraphics[width=0.4\textwidth]{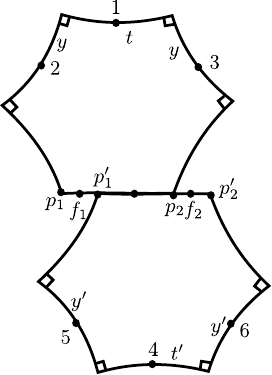}
		\caption{The involution $\psi'$ from Lemma \ref{lem-pair} fixes the points $f'_{1}$ and $f'_{2}$, and fixes the Weierstrass points setwise, where $\psi'(1)=4$, $\psi'(2)=6$ and $\psi'(3)=5$. Moreover, $\psi'$ maps geodesic segments in $K$ to the corresponding geodesic segments in $K'$. For example, the geodesic segment between Weierstrass point number 2 and $f'_{1}$ is mapped to the geodesic segment in $K'$ between Weierstrass point number 6 and $f'_{1}$. }
		\label{involution}
	\end{figure}
	
	\begin{prop} \label{prop-pair} Let $S$ be such that $\mathrm{sys}(S)$ fills and contains disjoint pairs $\{a,b\}$, $\{a',b'\}$, but not a triple. Then one of the following holds
		\begin{enumerate}
			\item{$\mathrm{sys}(S)$ is a 5-chain,}
			\item{$\mathrm{sys}(S)$ is a 6-ring,}
			\item{$\mathrm{sys}(S)$ has exactly 6 elements and contains a non-filling 4-ring.}
		\end{enumerate}
	\end{prop}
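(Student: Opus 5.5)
We work in the setting of Lemma \ref{lem-pair}: there is a separating geodesic $z$ cutting $S$ into isometric $(1,1)$-subsurfaces $A\supset\{a,b\}$ and $A'\supset\{a',b'\}$, with Weierstrass points $1,2,3$ in $A$ and $4,5,6$ in $A'$. We may take $a=1-2$, $b=2-3$ (so they meet at $2$). By the isometry $\psi$, $a'=4-6$ and $b'=5-6$ meet at $6$. The plan is to determine which further systoles can occur, and to use that $\mathrm{sys}(S)$ contains no triple.

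\textbf{Step 1: systoles inside $A$ and $A'$.} A geodesic in $A$ through two of $1,2,3$ other than $a,b$ is either $c=1-3$, which would give a triple, or has intersection number $\geq 2$ with $a$ or $b$. Hence the only systoles in $A\cup A'$ are $a,b,a',b'$. Since $\mathrm{sys}(S)$ fills, every further systole $d$ crosses $z$, and since $d$ is non-separating and meets $z$ it must join a point of $\{1,2,3\}$ to a point of $\{4,5,6\}$. Such a $d$ meets $z$ exactly twice: its arc in $A$ is a $\tau$-invariant arc through one Weierstrass point with endpoints on $z$, and similarly in $A'$. The systolic graph is therefore the two paths $1-2-3$ and $4-6-5$ together with some \emph{cross edges} between the two vertex sets.

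\textbf{Step 2: restricting the cross edges.} We use three constraints.
\begin{enumerate}
\item Pairwise intersection at most one, which is forced by the systole hypothesis as noted in Subsection \ref{subdefns}.
\item The absence of $k$-stars for $k\geq 4$ and of filling 4-rings, by Corollary \ref{cor-star} and Proposition \ref{prop-4ring}, since $S$ is not Bolza: the Bolza surface contains triples.
\item The absence of triples, which also rules out 3-rings by Lemma \ref{lem-triple}.
\end{enumerate}
The no-3-ring condition forbids two cross edges from the endpoints of one edge to a common vertex. The intersection condition is the key point. A cross edge through $2$ crosses $a$ and $b$ at $2$, and one checks that it then meets $a$ or $b$ a second time unless its $A$-arc is the perpendicular-type arc $t(\cdot)$, in analogy with part 2 of the proof of Proposition \ref{prop-triple}. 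We would catalogue the possible $A$-arcs through $1$, $2$, $3$ that meet $a$ and $b$ at most once each. There are essentially one arc per Weierstrass point, given by the arcs $t(c),t(a),t(b)$, which pass through $2,1,3$ respectively by Remark \ref{passthrough}. So each vertex carries at most one cross edge, and the cross edges form a partial matching between $\{1,2,3\}$ and $\{4,5,6\}$.

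\textbf{Step 3: enumeration and filling.} Filling requires the graph to be connected. Given the matching constraint and the $\psi$-symmetry, we enumerate the possibilities:
\begin{enumerate}
\item One cross edge joining two endpoints, such as $3-5$ or $1-4$, gives a 5-chain.
\item Two cross edges $1-4$ and $3-5$ (or the analogous pair) close up a 6-ring.
\item Two cross edges sharing an endpoint of one path with the middle of the other, such as $3-5$ and $2-6$, contain a 4-ring $2-3-5-6$. We must show this 4-ring is non-filling, i.e.\ lies in a $(1,2)$-subsurface in the sense of Lemma \ref{lem-4ring}, and that there are exactly 6 elements.
\end{enumerate}
A single cross edge through a middle vertex $2$ or $6$ is excluded because it does not fill. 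Three cross edges are excluded because they would create a filling 4-ring, a 3-ring, or a vertex of degree 4 (a 4-star). Each of these claims needs a check using that the cross-edge arcs are determined by which Weierstrass point they pass through, together with the twist along $z$.

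\textbf{Main obstacle.} The hardest step is Step 2: showing rigorously that a geodesic crossing $z$ which meets each of $a$ and $b$ at most once has its $A$-arc among the three arcs $t(\cdot)$. Equivalently, the cross edges at a given Weierstrass point are unique, and in case (3) the 4-ring is non-filling. My first approach would be to argue in the hexagon $K$ from the proof of Lemma \ref{lem-pair}, classifying simple $\tau$-invariant arcs by their intersection numbers with $a$ and $b$ via the Dehn--Thurston-type coordinates of the punctured torus $A$.
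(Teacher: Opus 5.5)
\textbf{There is a genuine gap: the cross edge joining the two centers.} In your labels this is $2-6$; in the paper's proof the centers are labelled $1$ and $4$ and it is $e=1-4$. You discard a lone such edge because it ``does not fill'', but that is false. $S\setminus(a\cup b\cup a'\cup b')$ is an annulus with core $z$. Any systole $e$ crossing $z$ crosses it exactly twice and meets $a\cup b$ and $a'\cup b'$ in a single Weierstrass point each, so it cuts this annulus into two discs. Hence $\{1-2,2-3,4-6,5-6,2-6\}$ fills. It is $\psi$-invariant, has no 3-ring and no vertex of degree $4$, and all its intersection numbers are at most one. So none of your three constraints excludes it, yet it is none of the three conclusions. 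Your example for case (3), $\{3-5,2-6\}$, contains the same edge, so the enumeration is off there as well.

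No combinatorial argument removes this edge; you need the metric input of part 1 of the paper's proof. Because there is no triple, the shortest curve $c$ completing a triple with $a,b$ satisfies $L(c)>L(a)=L(b)$, so $t(c)$ is strictly longer than $t(a)$ and $t(b)$. The arcs of the center--center curve in $A$ and $A'$ pass through the centers, so they are at least as long as $t(c)$ and $t(c')$. Using the isometry between $A$ and $A'$, the paper compares this curve with a curve crossing $z$ built from the shorter arcs, so it cannot be a systole.

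\textbf{The same ingredient is what your partial-matching step needs; the justification you give does not establish it.} Classifying arcs is easy, since an arc $t(p)$ meets a curve $q$ in $i(p,q)$ points. But it gives two admissible arcs through the center, $t(c)$ and $t(\bar c)$, one for each curve completing a triple with $a,b$. There you are rescued only by the no-4-star constraint. (Also, $t(a)$ passes through $3$, not $1$.) At an end vertex the arc is unique, but that does not bound the number of cross edges. For example, $1-4$ and $1-5$ can both use the class $t(b)$ in $A$, their $A'$-arcs lie in the disjoint classes $t(b')$ and $t(a')$, and nothing in your constraints forces a second intersection. Adding $\psi(1-5)=3-4$ gives a configuration with no 3-ring, no 4-star, and all intersections at most one.

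The paper disposes of this configuration (its case $e=2-6$, $e'=3-6$) by passing to new disjoint pairs. In your labels these are $\{1-2,1-5\}$ and $\{3-4,4-6\}$, with centers $1$ and $4$, so part 1 applied to them forbids the systole $1-4$.

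So the real obstacle is not the arc classification you single out. It is the lemma that, when there is no triple, the center--center curve of any two disjoint pairs is not a systole. With that lemma, and with $\psi$ forcing $\psi(e)$ whenever $e$ is a systole, your enumeration becomes the paper's case analysis by the $\psi$-type of the first cross edge.
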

	\begin{proof} We will use the same notation as in Lemma \ref{lem-pair} and its proof. The argument will be subdivided into three parts for ease of reference; these do not correspond to the three statements of the proposition.
		
		1) By Lemma \ref{lem-pair}, $S$ has two isometric $(1,1)$-subsurfaces $A$ and $A'$, with common boundary geodesic $z$, where $A$ contains $a$ and $b$ and $A'$ contains $a'$ and $b'$. Choose $c$ in $A$ such that $\{a,b,c\}$ form a triple and such that the length of $c$ is minimal subject to this constraint. By hypothesis,  $c\not\in \mathrm{sys}(S)$ so that the length $2y$ of $c$ is strictly larger than the length $2x$ of $a$. It then follows that $t(c)$ is strictly longer than $t(a)$ (and $t(b)$), where $t(c)$, $t(a)$, $t(b)$ are defined as in the proof of Proposition \ref{prop-triple}. 
		
		Since $\mathrm{sys}(S)$ fills, there is an element $e\in \mathrm{sys}(S)$ that intersects $z$. Suppose $e=1-4$. Let $d\in\{a,b,c\}$ and denote by $T_{d_{i}}$ the pair of endpoints of $t_{d}$ on $z$, $i=1,2$. Define $T_{d'_{i}}$ analogously with $d'\in\{a',b',c'\}$ in place of $d$. Since $A$ and $A'$ are isometric, the shorter distance between $T_{d_{i}}$ and $T_{d'_{j}}$ on $z$, $i,j\in\{1,2\}$, is the same for $d=a$, $d=b$, and $d=c$. Since $t_{c}$ is strictly longer than $t_{a}$, it follows  that there is a geodesic in $S$ that intersects $z$ disjoint from and shorter than $1-4$. Hence $e=1-4$ is not possible. 
		
		2) Now assume that $e=2-5$ so that $\psi(e)=3-6\neq e$. 
		
		Then  $\mathrm{sys}(S)$ contains the 6-ring $\{1-2, 2-5, 5-4, 4-6, 6-3, 3-1\}$. Suppose that $\mathrm{sys}(S)$ contains a further element $e'$. Since $\mathrm{sys}(S)$ does not contain a triple, we have only the possibilities $e'=1-4$, $e'=2-6$, $e'=3-5$. We have seen in part (1) that $e'=1-4$ is not possible. The same argument excludes $e'=2-6$ as well as $e'=3-5$ since $\mathrm{sys}(S)$ contains the disjoint pairs $\{1-2,2-5\}$, $\{3-6,4-6\}$ as well as the disjoint pairs $\{1-3,3-6\}$, $\{2-5,4-5\}$. Therefore, $\mathrm{sys}(S)$ is a 6-ring.
		
		If $e=3-6$ and $\psi(e)=2-5$, we obtain the same result.
		
		Assume now that $e=2-6$ so that $\psi(e)=e$. Then $\mathrm{sys}(S)$ contains the 5-chain $\{3-1, 1-2, 2-6, 6-4, 4-5\}$. Suppose $\mathrm{sys}(S)$ contains a further element $e'$. If $e'=3-5$, we are in the previous case where $\mathrm{sys}(S)$ is a 6-ring. Let $e'=3-6$ so that $\psi(e')=2-5$. Then $\mathrm{sys}(S)$ contains the disjoint pairs $\{1-2,2-5\}$, $\{4-3,4-5\}$, but since $\mathrm{sys}(S)$ also contains $1-4$, we obtain a contradiction by part (1). Let $e'=3-5$ so that $\psi(e')=2-6$. Then $\mathrm{sys}(S)$ contains the disjoint pairs $\{1-2,1-4\}$, $\{3-6,4-6\}$ which gives a contradiction since by (1), $2-6$  then cannot be a systole. That $e'=3-4$, $\psi(e')=1-5$ is not possible will be shown in part (3). Since $\mathrm{sys}(S)$ does not contain a triple and since $e'=1-4$ is excluded by part (1), the existence of $e'$ is not possible. Therefore, we have one of the first two cases of the proposition.
		
		The same result is obtained if  $e=3-5$. 
		
		3) Assume now that $e=1-5$ so that  $\psi(e)=3-4$. Consequently, $\mathrm{sys}(S)$ contains the 4-ring $\{1-3,3-4,4-5,5-1\}$. Suppose that $\mathrm{sys}(S)$ contains a further element $e'$. Since $\mathrm{sys}(S)$ does not contain a triple, nor a 4-star (by Corollary \ref{cor-star}), the unique possibility is $e'=2-6$, but by part (2), this is not possible. Therefore, $\mathrm{sys}(S)$ has exactly 6 elements and we have the third case of the proposition.
		
		The same result is obtained if $e=3-4$ or $e=1-6$ or $e=2-4$. 
	\end{proof}
	
	\begin{cor} \label{cor-pair1} Let $S$ be such that $\mathrm{sys}(S)$ contains a 5-ring. Then $\mathrm{sys}(S)$ has exactly five elements or contains a triple.
	\end{cor}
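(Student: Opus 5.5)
The plan is to reduce to Proposition \ref{prop-pair} by locating disjoint pairs inside the 5-ring. Label the 5-ring $\{a_1,\ldots,a_5\}$ with $a_i$ meeting exactly $a_{i-1}$ and $a_{i+1}$ (indices mod 5). Then $\{a_1,a_2\}$ and $\{a_3,a_4\}$ are disjoint pairs: $a_1$ is disjoint from $a_3,a_4$, and $a_2$ is disjoint from $a_4$. The pair $a_2,a_3$ does intersect, however, so I will instead use $\{a_1,a_2\}$ and $\{a_4,a_5\}$. Here $a_1$ meets $a_5$, which also fails.

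In a 5-ring, every two pairs of consecutive edges share or are adjacent to one another, so the 5-ring itself never contains disjoint pairs. The argument therefore has to go through the systolic graph. The 5-ring occupies five Weierstrass points, say $1-2,2-3,3-4,4-5,5-1$, with point $6$ unused.

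First, I will check whether the 5-ring fills. In the quotient sphere, the five edges form a pentagon cycle with $6$ lying on one side. The complement on the other side is a disk, and on the $6$ side it is a once-punctured disk. Lifting to $S$, that region contains the branch point $6$, so its preimage is a disk. The regions lift to disks exactly when each complementary region of the quotient contains at most one branch point, so I expect the 5-ring to fill. This allows the earlier Propositions to be applied to $\mathrm{sys}(S)$.

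Now suppose $\mathrm{sys}(S)$ has a sixth element $e$ and contains no triple; I will derive a contradiction. By Corollary \ref{cor-star}, a systole through a vertex already of degree $2$ is still allowed, unless $S$ is the Bolza surface, which contains triples. The candidates for $e$ are of two kinds. The first kind is a chord of the pentagon such as $1-3$; this creates the 3-ring $\{1-2,2-3,1-3\}$, which is a triple by Lemma \ref{lem-triple}, contradicting the assumption. The second kind is an edge $p-6$, say $1-6$. Then $\{1-6,1-2\}$ and $\{3-4,4-5\}$ are disjoint pairs, since the point sets $\{1,2,6\}$ and $\{3,4,5\}$ are disjoint and geodesics meet only at shared Weierstrass points. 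Proposition \ref{prop-pair} then applies: $\mathrm{sys}(S)$ fills, contains disjoint pairs, and contains no triple, so it must be a 5-chain, a 6-ring, or a set of 6 elements containing a non-filling 4-ring.

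The main obstacle is excluding these three outcomes, and this must be done carefully. A 5-chain has only five elements and cannot contain a 5-ring. A 6-ring's systolic graph is a hexagon, which contains no 5-cycle. In the third case, $\mathrm{sys}(S)$ would be the five ring edges plus $1-6$, a graph with a single cycle (the pentagon). It therefore contains no 4-ring at all, which is a contradiction. I also need the fact, stated earlier, that two systoles never join the same pair of Weierstrass points; otherwise they would intersect twice or be equal. This makes the graph description and the enumeration of candidates legitimate. Hence $\mathrm{sys}(S)$ either equals the 5-ring or contains a triple.
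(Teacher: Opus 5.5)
Your proof is correct and follows the paper's argument: a sixth systole is either a chord of the pentagon, which gives a 3-ring and hence a triple by Lemma \ref{lem-triple}, or an edge $p-6$, which produces disjoint pairs so that Proposition \ref{prop-pair} forces a triple (you use $\{1-6,1-2\}$, $\{3-4,4-5\}$ where the paper uses $\{1-5,1-6\}$, $\{3-2,3-4\}$, which makes no difference). Your check that the 5-ring fills, and your explicit exclusion of the three alternatives in Proposition \ref{prop-pair}, supply details that the paper leaves implicit.
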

	\begin{proof}
		By hypothesis, $\mathrm{sys}(S)$ contains (at least) five elements which may be described as $1-2$, $2-3$, $3-4$, $4-5$, $5-1$. Assume that $\mathrm{sys}(S)$ contains a further element $a$ which is $1-q$. If $q=3$ or $q=4$, then $\mathrm{sys}(S)$ contains a triple. If $q=6$,  then $\{1-5, 1-6\}$, $\{3-2,3-4\}$ are disjoint pairs and  $\mathrm{sys}(S)$ contains a triple by Proposition \ref{prop-pair}. If $a=p-q$ for $p\in\{2,3,4,5\}$, the same argument applies, thereby proving the corollary. 
	\end{proof}
	
	\begin{cor} \label{cor-pair2} Let $S$ be such that $\mathrm{sys}(S)$ fills and contains a 4-ring that does not fill. Then $\mathrm{sys}(S)$ either contains a triple or has exactly 6 elements.
	\end{cor}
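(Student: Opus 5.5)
Let the non-filling 4-ring in $\mathrm{sys}(S)$ be $\{a,b,c,d\}=\{1-2,2-3,3-4,4-1\}$. We assume $\mathrm{sys}(S)$ contains no triple and show that $\mathrm{sys}(S)$ has exactly $6$ elements. The idea is to locate disjoint pairs inside this 4-ring and then apply Proposition \ref{prop-pair}.

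By Lemma \ref{lem-4ring}, the 4-ring lies in a unique $(1,2)$-subsurface $C$ of $S$. Opposite edges of the ring, for instance $a=1-2$ and $c=3-4$, are disjoint, while consecutive edges meet once. Hence $\{a,b\}$ and $\{c,d\}$ are candidates for disjoint pairs. This needs $i(a,c)=i(a,d)=i(b,c)=i(b,d)=0$, but $b$ meets $c$ at $3$ and $a$ meets $d$ at $1$, so this choice fails. A pair of intersecting systoles disjoint from another intersecting pair must use four distinct Weierstrass points, with no systole joining the two pairs.

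So I would instead use the fact that $\mathrm{sys}(S)$ fills while the 4-ring does not. Then there is a systole $e$ crossing the boundary of $C$. Since systoles intersect only at Weierstrass points, $e$ has an endpoint in $\{5,6\}$. If $e=5-6$, the systolic graph would be disconnected from the ring unless further systoles exist; by filling, the graph is connected, so we may take $e=p-5$ with $p\in\{1,2,3,4\}$, say $p=1$ by symmetry of the ring.

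Now we look for disjoint pairs. The pair $\{1-5, 1-2\}$ together with $\{3-4, ?\}$ would need a partner systole through $3$ or $4$ that avoids $1,2,5$. The natural candidates are systoles at $6$, for example $3-6$ or $4-6$. Since vertex $6$ must also be covered (the graph has all six vertices, being connected and filling in a genus 2 surface whose complement consists of disks containing no Weierstrass point interior unless it is the centre of a polygon), I expect a short case analysis on how $6$ attaches:
\begin{itemize}
\item If $6$ attaches to $1$, $1$ has valence $\ge 4$, giving a 4-star. By Corollary \ref{cor-star} this forces the Bolza surface, which contains triples.
\item If $6$ attaches to $5$ or to a vertex of the ring other than $1$, we find disjoint pairs such as $\{5-1,1-2\}$ and $\{3-4,4-6\}$, or $\{2-1,1-5\}$ and $\{3-6,3-4\}$.
\end{itemize}
Proposition \ref{prop-pair} then applies. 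Its outcomes are a 5-chain, a 6-ring, or exactly 6 elements containing a non-filling 4-ring. The first two are incompatible with containing our 4-ring $\{1-2,2-3,3-4,4-1\}$, since a 5-chain or 6-ring contains no 4-cycle. Hence the third case holds and $|\mathrm{sys}(S)|=6$.

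The main obstacle is the assertion that vertex $6$ is forced to appear. If the complement polygons can have a Weierstrass point at their centre, as in the octagon of Proposition \ref{prop-4ring}, then $6$ need not be on any systole. If $6$ is absent, the graph on $\{1,\dots,5\}$ has the 4-ring plus edges at $5$. Two edges at $5$ to adjacent ring vertices give a triple. Edges to opposite ring vertices, say $5-1$ and $5-3$, yield the disjoint pairs $\{2-1,1-5\}$ and $\{3-4,\ \cdot\}$ only if an extra systole exists. I would handle this remaining case either by Corollary \ref{cor-pair1}, since $\{1-2,2-3,3-5,5-1\}$ together with $4$ forms a 5-ring-type configuration, or by the angle-sum argument used in Proposition \ref{prop-4ring}, applied to the polygon containing $6$, to derive a contradiction.
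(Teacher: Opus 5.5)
There is a genuine gap. Your case analysis leaves open exactly the configuration the corollary has to exclude: $6$ lies on no systole and $5$ is joined to the ring by a single systole, i.e.\ $\mathrm{sys}(S)=\{1-2,2-3,3-4,4-1,1-5\}$.

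\textbf{Why this case is open.} This set fills. The complement of the ring in $S$ is two rhombi and an annulus around the geodesic $t=5-6$ along which $C$ is cut. The systole $1-5$ crosses that annulus from one boundary cycle to the other, leaving a disk that contains $6$. The set has five elements and no triple. It contains no disjoint pairs, since two intersecting pairs occupy two triples of Weierstrass points inside $\{1,\dots,5\}$ and therefore share a point. So Proposition \ref{prop-pair}, Corollary \ref{cor-pair1} and Corollary \ref{cor-star} are all unavailable.

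\textbf{Your fallbacks do not close it.}
\begin{itemize}
\item There is no $5$-ring here. In your subcase with $1-5$ and $3-5$ the graph is the bipartite $K_{2,3}$. That subcase is harmless anyway: it already has exactly $6$ systoles, and any further one creates a $3$-ring, hence a triple by Lemma \ref{lem-triple}.
\item The angle-sum argument of Proposition \ref{prop-4ring} relied on the octagon having exactly half the area of $S$. Here, counting the two copies of $5$ as vertices of angle $\pi$, the triangle comparison around $6$ only bounds the angle sum of the polygon containing $6$ from below by $4\pi$. Gauss--Bonnet gives that sum as $6\pi$ plus twice the area of a rhombus, so no contradiction arises.
\item You cannot invoke Corollary \ref{cor-ring3} via the long $Y$ $\{2-3,1-2,1-4,1-5\}$. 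Part d) of the proof of Proposition \ref{prop-ring3} passes from a non-filling $4$-ring with one pendant systole to a triple, i.e.\ it relies on the present corollary, so this would be circular.
\end{itemize}

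\textbf{The missing idea} is the symmetry the paper uses. The ring curves have equal length and both boundary curves of $C$ are copies of $t$. Hence $C$ is glued from isometric pairs of pants along $a,c$, and also along $b,d$. Composing the two resulting reflections gives an involution $\psi$ with $a\leftrightarrow c$ and $b\leftrightarrow d$. It induces an isometry of $S$, because it acts on each boundary copy of $t$ as a half-turn, which commutes with the gluing.

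So the crossing systole $e$ is accompanied by a second systole $\psi(e)\neq e$ through the opposite ring vertex. In your labelling $\psi(1-5)=3-6$. Then $\{1-2,1-5\}$ and $\{3-4,3-6\}$ are disjoint pairs, and Proposition \ref{prop-pair} finishes at once, with no case distinction on vertex $6$.

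\textbf{A smaller point.} Your reason why $e$ passes through $5$ or $6$ is not valid, because $\partial C$ is not a systole. The correct argument: a systole through two ring vertices either shares both Weierstrass points with a ring curve, so meets it twice, or is a diagonal, which produces a triple by Lemma \ref{lem-triple}.
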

	\begin{proof}
		Let $\{a,b,c,d\}\in \mathrm{sys}(S)$ be a 4-ring in $S$ that does not fill. By Lemma \ref{lem-4ring}, there is a $(1,2)$-subsurface $C$ of $S$ containing $\{a,b,c,d\}$. Since the two boundary geodesics of $C$ have the same length and $a,b,c,d$ all have the same length, $C$ is a union of two isometric pairs of pants, each with a boundary geodesic on $a$ and on $c$. Alternatively, $C$ is a union of two isometric pairs of pants, each with a boundary geodesic on $b$ and on $d$. There is an involution $\psi$ of $C$ obtained by composing two reflections. One of these reflections restricts to a reflection of each of the pants with boundaries containing $a$ and $c$, interchanging $a$ and $c$ and fixing $b$ and $d$ setwise. The other reflection restricts to a reflection of each of the pants with boundaries containing $b$ and $d$, interchanging $b$ and $d$ and fixing $a$ and $c$ setwise. One of these reflections is illustrated in Figure \ref{involution2}.
		
		\begin{figure}[!thpb]
			\centering
			\includegraphics[width=0.3\textwidth]{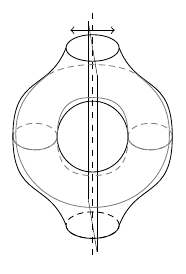}
			\caption{The geodesics $\{a,b,c,d\}$ inside the $(1,2)$-subsurface $C$. The reflection shown interchanges $a$ and $c$ and fixes $b$ and $d$ setwise.}
			\label{involution2}
		\end{figure}
		
		The hyperelliptic involution $\tau$ of $S$ also induces an involution of $C$. 
		
		By hypothesis, there is an element $e\in \mathrm{sys}(S)$ that intersects the boundary geodesic of $C$. Suppose that $e\,\cap\,C$ has two connected components. Then $e$ intersects all elements of $\{a,b,c,d\}$. This implies that $\{a,b,e\}$ is a triple in $S$ that is not contained in a $(1,1)$-subsurface $S$, contradicting Lemma \ref{lem-triple}. Therefore, $e\,\cap\,C$ has a unique component, also called $e$, and $\psi(e)\neq e$. We may assume that $e$ intersects $a$ and $b$. Then $e'$ intersects $c$ and $d$ so that $\{e,a\}$, $\{e',c\}$ are disjoint pairs. The corollary now follows from Proposition \ref{prop-pair}. 
	\end{proof}

	\begin{defn}  We denote by $M_{6}$ an unmarked surface $S$ representing a point in moduli space with the property that $\mathrm{sys}(S)$ is a 6-ring and all intersections between systoles occur at right angles.
	\end{defn}
	\begin{defn} We call $M_{5}$ an unmarked surface $S$ representing a point in moduli space with the property that $\mathrm{sys}(S)$ is a 5-ring for which all intersections between systoles occur with the same angle.
	\end{defn}
	The surfaces  $M_{6}$ and $M_{5}$ were defined in Theorem 44 of \cite{SchmutzMorse}, where they were shown to be critical points of $syst$ of index 3 and 4 respectively.
	
	\begin{prop} \label{prop-65}
		This proposition has two parts. 
		\begin{enumerate}
			\item{ $M_{6}$ exists and is unique. If $2x$ is the length of its systoles, then $\cosh(x)=2$ and this is the minimal length for all surfaces $S$ where $\mathrm{sys}(S)$ contains a 6-ring.}
			\item{$M_{5}$ exists and is unique. If $2x$ is the length of its systoles, then $\displaystyle\cosh(x)=\,\frac{2+\sqrt{5}}{2}\,\sim 2.12$ and this the minimal length for all surfaces $S$ where $\mathrm{sys}(S)$ contains a 5-ring.}
		\end{enumerate}
	\end{prop}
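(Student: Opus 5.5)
The plan is to handle both parts the same way: cut $S$ along the ring of systoles, identify the polygons that result, and reduce the statement to an extremal problem for equilateral hyperbolic polygons of prescribed total area. Throughout, the systoles meet only at Weierstrass points, and each systole of length $2x$ passes through exactly two of them. Those two points are exactly where it crosses its two neighbours in the ring. So every systole in the ring is cut into two arcs of length $x$, and every complementary region is a polygon all of whose sides have length $x$.

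\textbf{Part (1).} For a 6-ring the graph on $S$ has $V=6$ vertices (each of degree $4$) and $E=12$ edges. Since $\chi(S)=-2$, there are $F=4$ faces, and they have $24$ corners in total. Because $\mathrm{sys}(S)$ fills and $\tau$ preserves the configuration, I would show that the faces are four hexagons with sides of length $x$, exchanged in pairs by $\tau$ and by the involution $\psi$ of Lemma \ref{lem-pair}. By Proposition \ref{prop-pair} the 6-ring arises from two disjoint pairs, and $\psi$ exchanges the two sides $A$ and $A'$; this symmetry is what makes the hexagons congruent in pairs. Since $\mathrm{area}(S)=4\pi$, the faces have total area $4\pi$ and their angle sums add up to $12\pi$.

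Next I would invoke the isoperimetric inequality for hyperbolic $n$-gons: among $n$-gons of fixed area, the regular one has the smallest maximal side length. Fixing the side length $x$, this bounds the area of each hexagon from above by the area of the regular hexagon of side $x$. The area of that regular hexagon is increasing in $x$, and the four areas must add up to $4\pi$. Hence $x$ is at least the side of the regular hexagon of area $\pi$, which is the regular right-angled hexagon. From $\cosh(s/2)=\cos(\pi/n)/\sin(\alpha/2)$ with $n=6$ and $\alpha=\pi/2$ we get $\cosh(x/2)=\sqrt{3/2}$, so $\cosh x=2$.

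Equality forces all four hexagons to be regular and right-angled, so every intersection is orthogonal; this gives uniqueness up to isometry. For existence, I would glue four regular right-angled hexagons along the combinatorics of the 6-ring. It then remains to check that no other closed geodesic has length at most $2x$. By Corollary \ref{cor-pair1} and Proposition \ref{prop-pair}, the only other candidates are $1-4$, $2-6$ and $3-5$, and I would compute their lengths directly in the glued hexagons.

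\textbf{Part (2).} For a 5-ring we have $V=5$ and $E=10$, so $F=3$ faces with $20$ corners in total. The sixth Weierstrass point lies off the ring, and $\tau$ fixes the face containing it. I expect the faces to be two pentagons exchanged by $\tau$ and one $\tau$-invariant decagon, with all sides of length $x$. If the intersection angle is $\varphi$, the angles at each vertex alternate between $\varphi$ and $\pi-\varphi$.

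For $M_5$ I would impose that all angles are equal. The two pentagons and the decagon then have to be regular, and the area condition reads $2A_5+A_{10}=4\pi$. The pentagon angle $\alpha$ and the decagon angle $\beta$ satisfy the vertex condition $2\alpha+2\beta=2\pi$ (two corners of each type at each vertex). Combining this with the equal-side condition
\begin{equation*}
\frac{\cos(\pi/5)}{\sin(\alpha/2)}=\frac{\cos(\pi/10)}{\sin(\beta/2)}
\end{equation*}
should determine $\alpha$ and $\beta$ uniquely, and I expect it to give $\cosh x=(2+\sqrt5)/2$.

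\textbf{Main obstacle.} The hard step is minimality in part (2). The areas of the pentagons and of the decagon are not fixed separately, so the isoperimetric inequality alone is not enough. I would first replace each face by the regular polygon with the same side $x$, which only increases the areas. I would then minimise $x$ subject to the angle-compatibility constraints at the vertices and $2A_5+A_{10}\le 4\pi$. My first attempt would be a Lagrange/convexity argument: area as a function of side length for regular $n$-gons is concave, and the angle constraints couple the pentagon and decagon angles. The goal is to show that the optimum is the equal-angle configuration. If that fails, I would fall back on the fact that $M_5$ is a critical point of $syst$ (\cite{SchmutzMorse}). Together with the strict convexity of length functions, this shows that $M_5$ minimises the sum of the five lengths on the stratum, which gives the claimed bound.
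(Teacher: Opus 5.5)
\textbf{Comparison with the paper.} The paper proves none of existence, uniqueness or minimality itself. It cites Section 5 and parts (iii)--(iv) of the proof of Theorem 44 of \cite{SchmutzMorse}, and only computes $\cosh x$ from the right-angled hexagon tessellation of $M_6$ and a decomposition of $M_5$ into congruent quadrilaterals. Your isoperimetric argument is a genuinely different, self-contained route to minimality, with the equality case (all faces regular) built in. For part (1) it is sound. The ring fills, and the corners at each Weierstrass point are $\varphi,\pi-\varphi,\varphi,\pi-\varphi$, so the faces are convex equilateral polygons. The face structure is read off in $S/\tau$, where the ring is an embedded cycle through the cone points: four hexagons for the 6-ring; two pentagons and a decagon for the 5-ring.

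\textbf{Part (2): the obstacle is not real.} Your first attempt also has the inequality backwards. Replacing faces by regular ones \emph{increases} area, so with $A_n(x)$ the area of the regular $n$-gon of side $x$ you get $4\pi\le 2A_5(x)+A_{10}(x)=14\pi-10(\alpha_5(x)+\alpha_{10}(x))$. The right-hand side is strictly increasing in $x$. It equals $4\pi$ exactly when $\alpha_5(x)+\alpha_{10}(x)=\pi$, which is your $M_5$ condition. That condition gives $\tan(\alpha/2)=\cos(\pi/5)/\cos(\pi/10)$ and $\cosh^2(x/2)=\cos^2(\pi/5)+\cos^2(\pi/10)=1+\sqrt5/4$, so $\cosh x=(2+\sqrt5)/2$ and $x\ge x_{M_5}$. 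The separate face areas never need to be fixed, and no Lagrange or concavity argument is needed.

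Your fallback is also valid, and it is essentially what the paper imports. Positive weights with $\sum a_i\nabla L(c_i)=0$ at $M_5$ make $M_5$ the global minimum of $\sum a_iL(c_i)$ on $\mathcal{T}$. Since all five lengths are equal at both $S$ and $M_5$, this gives $x_S\ge x_{M_5}$. You only need to add that all 5-rings lie in one mapping class group orbit.

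\textbf{The genuine gap is existence.} You say that, by Corollary \ref{cor-pair1} and Proposition \ref{prop-pair}, the only other candidates are $1-4$, $2-6$, $3-5$. This is circular: those results assume the ring curves are already systoles and only restrict which further curves can share their length. They cannot rule out a geodesic of your glued surface that is strictly shorter than $2x$. They also do not account for the infinitely many other geodesics through a given pair of Weierstrass points, or for separating geodesics. You need either a real lower bound for all other closed geodesics of the glued surface, or the citation to \cite{SchmutzMorse} that the paper uses. For $M_5$ you do not address existence at all.

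A smaller point concerns uniqueness of $M_5$. Equal unoriented intersection angles only tell you that each pentagon corner is $\varphi$ or $\pi-\varphi$. So regularity of the faces does not follow immediately; it needs an argument.
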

	\begin{proof}
		To prove part (1), note that the existence of $M_{6}$ and of $M_{5}$ as well as their uniqueness was proven in Section 5 of \cite{SchmutzMorse}. In $M_{6}$, the 6 systoles determine a tessellation by four isometric right-angled hexagons of edge length $x$. Consequently, $\cosh(x)=2$. The surface $M_{5}$ can be partitioned into 10 isometric quadrilaterals. Two opposite angles of such a quadrilateral $Q$ are $\pi/2$ while the other two are $\pi/5$ and $\pi/10$.  The diagonal in $Q$ between the two right angles has the length $x$. This gives $\displaystyle\cosh(x)=\,\frac{2+\sqrt{5}}{2}$ as required.
		The minimality of the length of the systoles in $M_{6}$ and in $M_{5}$ follows from part (iii) and part (iv) of the proof of Theorem 44 in \cite{SchmutzMorse}. 
	\end{proof}
	
	\begin{defn}  Suppose $a,b,c$ and $d$ are four different geodesics in $S$ such that $\{a,b,c\}$ is a 3-chain and $\{b,c,d\}$ is a 3-star. Suppose also that $a$ and $d$ are disjoint. We then say that $\{a,b,c,d\}$ is a long $Y$.
	\end{defn}
	
	\begin{defn} We say that a filling set $C$ of geodesics is minimal if for every $a\in C$, $C\setminus\{a\}$ does not fill.
	\end{defn} 
	
	\begin{prop}[Proposition \label{prop-minimal} from the introduction]
Every minimal filling subset of systoles is either a 4-star, a 4-ring, a 4-chain, or a long $Y$.
	\end{prop}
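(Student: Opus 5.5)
Let $C\subseteq \mathrm{sys}(S)$ be a minimal filling set. The plan is to translate the statement into a statement about the systolic graph $G$ of $C$, and then run a purely combinatorial case analysis. That analysis is constrained by the topological facts already established: Lemma \ref{lem-triple}, Corollary \ref{cor-star}, Lemma \ref{lem-4ring} and the discussion of when a connected graph fills.

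\textbf{Graph-theoretic setup.} Since $C$ fills, $G$ is connected. Every vertex of a filling configuration is covered: a Weierstrass point not lying on any element of $C$ sits in the interior of a complementary disk, and each disk contains at most one such point by the involution argument used in Proposition \ref{prop-4ring}. So $G$ has at least four vertices, and four only in the filling 4-ring situation of Lemma \ref{lem-4ring}.

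Next we show that a minimal $C$ has exactly four elements. The complement of $C$ consists of disks, so an Euler characteristic count shows that a filling set needs enough edges for $V-E+F=-2$. Here $V$ is the number of Weierstrass points lying on curves of $C$, each curve contributes two edges, and the faces are the disks. This forces $|C|\ge 4$ when crossings only occur at Weierstrass points. Conversely, we use the stated fact that a connected systolic graph with at least five vertices fills. If $|C|\ge 5$, then $G$ has at least five edges on at most six vertices. Either $G$ contains a cycle, and an edge on that cycle can be removed keeping the graph connected with the same vertex set, still at least five vertices. Or $G$ is a tree on six vertices, and removing an edge to a leaf leaves a connected graph on five vertices. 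In both cases the smaller set still fills, contradicting minimality. So $|C|=4$.

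\textbf{Classifying the four-edge graphs.} Four edges and connectivity leave two possibilities.
\begin{itemize}
\item The vertex count is five, so $G$ is a tree. The trees with four edges are the path, the star $K_{1,4}$, and the ``fork'' (a vertex of degree 3 with one leg extended). These are exactly the 4-chain, the 4-star and the long $Y$.
\item The vertex count is four, so $G$ has one cycle. The cycle is either a 4-cycle, giving a 4-ring, or a triangle with a pendant edge.
\end{itemize}
The triangle-with-pendant case must be ruled out. By Lemma \ref{lem-triple}, the triangle is a triple contained in a $(1,1)$-subsurface $A$. A fourth curve meeting one vertex once cannot exit through $\partial A$ without intersecting it twice, and then the union fails to fill. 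Alternatively, four vertices with a triangle leave a complementary piece containing two uncovered Weierstrass points, so the set does not fill.

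\textbf{Main obstacle.} The delicate step is the reduction to $|C|=4$. There we must ensure that deleting an edge preserves both connectivity and a vertex count of at least five. We must also handle carefully the borderline case where $G$ has five edges but only the four-vertex 4-ring core plus a pendant edge; there, removing the wrong edge drops the vertex count to four. In that case I would remove a cycle edge instead, leaving a tree on five vertices, which fills. The topological fact that connected graphs on at least five vertices fill is taken from the discussion preceding the proposition; if needed, I would justify it by the count of complementary regions using the hyperelliptic quotient picture (a connected planar tree on the sphere containing at least five of the six cone points has complement a single disk with at most one cone point, which lifts to disks).
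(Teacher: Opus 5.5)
\textbf{Gap: four vertices with five or six edges.} Your reduction to $|C|=4$ fails when $G$ has only four vertices but five or six edges, i.e.\ when $G$ is a 4-cycle with a chord or $K_4$. For such $G$ your sentence ``an edge on that cycle can be removed keeping the graph connected with the same vertex set, still at least five vertices'' is false. Deleting edges does not repair it either. From the 4-cycle with a chord, deleting an edge gives a 4-ring or a triangle with a pendant edge, and in this configuration neither fills, so no contradiction with minimality arises that way.

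What you actually need is the claim from your setup that a filling $G$ on four vertices must be the 4-ring. That claim does not follow from ``each complementary disk contains at most one free Weierstrass point,'' and you never prove it. The ``main obstacle'' you single out, a 4-ring plus a pendant edge, is harmless. It has five vertices, and your cycle-edge deletion already covers it.

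\textbf{The fix.} The missing ingredient is the one you use only for the triangle-with-pendant case. By Lemma \ref{lem-triple}, every 3-cycle of $G$ is a triple, so it lies in a $(1,1)$-subsurface $A$. Then $A/\tau$ is a disk whose only cone points are the three vertices of the triangle. Hence, in $S/\tau$, the side of the triangle away from $\partial A$ is a face of $G$ containing no cone point; no other edge can enter it. On four vertices this rules out every graph containing a triangle:
\begin{itemize}
\item For the 4-cycle with a chord, both triangular faces are cone-free, so the two free Weierstrass points lie in the remaining face.
\item For $K_4$, all four faces would be cone-free, which is impossible.
\end{itemize}
So a filling 4-vertex $G$ is triangle-free and contains a cycle, i.e.\ it is the 4-cycle.

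Alternatively, use the paper's observation that deleting an edge of a triple from a filling set leaves a filling set, because the cone-free triangular face merges with its neighbouring face. Then a minimal filling set contains no triangle at all. This disposes of the triangle-with-pendant case and the dense 4-vertex graphs at once.

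\textbf{Comparison with the paper.} With this patch your route is complete. It runs: connected graphs on at least five vertices fill, so minimality forces four edges; then enumerate the connected 4-edge graphs. The paper instead excludes triples and then splits on whether $\Sigma$ contains a 3-star, giving the long $Y$ or 4-star, or has maximal degree two, giving chains and rings. Your global ``at least five vertices fills'' criterion makes the passage to four elements more explicit than the paper's case split does.
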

	\begin{proof}
		Let $\Sigma$ be a subset of $\mathrm{sys}(S)$ that fills and is minimal. Recall that the systolic graph corresponding to $\Sigma$ must be connected. If $\Sigma$ contained only 2 elements, there would be a separating geodesic in $S$ disjoint from all the systoles. If $\Sigma$ contained only 3 elements, then it is a triple, a 3-star, or a 3-chain. In all cases, there is a geodesic in $S$ disjoint from all the elements of $\Sigma$, see Lemma \ref{lem-triple} and its proof as well as the proof of Lemma \ref{lem-4ring}. This shows that $\Sigma$ has at least 4 elements. Moreover, $\Sigma$ cannot contain a triple since $\Sigma\setminus\{a\}$ would also fill if $a$ is chosen to be an element of this triple. 
		
		Assume that $\Sigma$ has  4 elements and contains a 3-star. There is then a unique $(1,2)$-subsurface  of $S$ that contains this 3-star. Moreover, since $\Sigma$ does not contain a triple, the fourth element of $\Sigma$ necessarily intersects the boundary of this subsurface and hence $\Sigma$ fills. Note $\Sigma$ is a long $Y$.
		
		For what follows we may assume that $\Sigma$ is minimal and does not contain a 3-star. In particular, $\Sigma$ is not a 4-star nor a long $Y$. Moreover, $\Sigma$ cannot contain a 4-ring that does not fill since $\Sigma$ would then contain a 3-star (one of the elements of $\Sigma$ not contained in the 4-ring must intersect an element of the 4-ring). It therefore follows that every cardinality 3 subset $\Sigma'$  of $\Sigma$ with connected systolic graph is a 3-chain. If $\Sigma$ is a 4-chain, then $\Sigma$ fills by Lemma \ref{lem-4ring} and its proof. Therefore, we may exclude that $\Sigma$ contains a 4-chain.  It follows that  $\Sigma$ is a 4-ring that fills and we are done. 
	\end{proof}
	
	\begin{defn} Let $C$ be a set of geodesics that fills and is minimal. We denote by $E(C)$ the set of all surfaces of genus 2 where the elements of $C$ all have the same length.
	\end{defn}

\begin{lem} \label{lem-ring1} Let $S$ be a $(1,2)$-surface where the two boundary components $t_{1}$ and $t_{2}$ have the same length $2t$. Let $1$, $2$, $3$, $4$ be the fixed points of the hyperelliptic involution of $S$. Let $S$ contain a 4-ring $a=1-2$, $b=2-3$, $c=3-4$, $d=1-4$ such that these four geodesics have the same length $2x$. Then $S$ is determined by $x$ and $t$.
\end{lem}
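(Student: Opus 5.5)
The plan is to cut $S$ along the two disjoint curves $a$ and $c$ of the 4-ring, as in the proof of Lemma \ref{lem-4ring}. This leaves two pairs of pants $P_{1}$ and $P_{2}$. Each has one boundary on a copy of $a$, one on a copy of $c$, and one equal to $t_{1}$ or $t_{2}$ respectively. The boundary lengths of each pant are $2x,2x,2t$, so both pants are determined up to isometry by $x$ and $t$, and they are isometric to each other. Since the boundary lengths are fixed, the only remaining moduli of $S$ are the two twist parameters $s_{a}$ and $s_{c}$ along $a$ and $c$. The dimension count agrees: a $(1,2)$-surface with fixed boundary lengths has four real parameters, namely the lengths of $a$ and $c$ and the two twists. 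It therefore suffices to show that the conditions $L(b)=L(d)=2x$ determine $(s_{a},s_{c})$ uniquely, given that $b$ and $d$ are the specified curves in the marked surface.

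To make this explicit I will use the hyperelliptic involution $\tau$, restricted from the ambient genus 2 surface. It fixes every simple closed geodesic setwise, so it preserves $a$, $c$, $t_{1}$ and $t_{2}$, and hence each of $P_{1}$ and $P_{2}$. On each pant it acts as the rotation by $\pi$ that swaps the two right-angled hexagons. The fixed points $1,2$ lie on $a$ at distance $x$ from each other, and $3,4$ lie on $c$ at distance $x$ from each other. I measure the twist $s_{a}$ as the signed distance from the seam foot on $a$ (as seen from $P_{1}$) to the point $1$, and define $s_{c}$ from the point $3$ on $c$ in the same way. The geodesic $b=2-3$ is $\tau$-invariant, so it consists of two arcs exchanged by $\tau$, one in each pant. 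Hence $L(b)=2\,\ell(2,3)$, where $\ell$ is the length of the geodesic arc in $P_{1}$ joining the given points of $a$ and $c$ in the correct homotopy class. Hexagon and pentagon trigonometry give a single explicit function $F$, depending only on $x$ and $t$, with
\begin{equation*}
\cosh\bigl(L(b)/2\bigr)=F(s_{a}+x,\,s_{c}),\qquad \cosh\bigl(L(d)/2\bigr)=F(s_{a},\,s_{c}+x).
\end{equation*}
I expect $F$ to have the form $F(u,v)=\alpha\cosh u\cosh v+\beta\sinh u\sinh v+\gamma$, with coefficients depending only on $x$ and $t$.

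The main obstacle is uniqueness of the solution of the system $F(s_{a}+x,s_{c})=F(s_{a},s_{c}+x)=\cosh x$. My first approach is the explicit one: subtract the two equations to obtain a relation between $s_{a}$ and $s_{c}$, then substitute into one equation and check that the resulting one-variable equation is strictly monotone in the relevant range. I expect the relation to reduce to something like $s_{a}-s_{c}$ being determined, up to the symmetry exchanging the roles of $a$ and $c$.

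As a fallback I would use convexity. By \cite{Wolpert}, $L(b)+L(d)$ is strictly convex along twist lines in the two-dimensional twist fiber, and it is proper on that fiber because $\{b,d\}$ together with $a$ and $c$ fill $S$. Combined with the symmetry $\psi$ exchanging $b$ and $d$ (the composition of reflections used in Corollary \ref{cor-pair2}), this should force the equal-length locus to meet the level set in a single point. Either route shows that $(s_{a},s_{c})$, and therefore $S$, is determined by $x$ and $t$.
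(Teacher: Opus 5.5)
Your twist-coordinate framework is a workable alternative to the paper's argument. As written, however, one step is false, and the decisive uniqueness step is only anticipated; the outcome you anticipate is not what actually happens.

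\textbf{The action of $\tau$.} $\tau$ does not preserve $t_1$, $t_2$, $P_1$ or $P_2$. It acts on $H_1$ by $-1$, so it reverses the orientation of $a$, $c$ and $t$. An orientation-preserving map of a pair of pants that preserves a boundary component also preserves that component's induced orientation. Hence $\tau$ must exchange $P_1$ and $P_2$, and $t_1$ and $t_2$; there is no ``rotation by $\pi$'' of $P_1$ of the kind you describe. Your next sentence (the two arcs of $b$ are exchanged by $\tau$, one in each pant) is true precisely because of this swap. If $\tau$ did preserve $P_1$, the points $1,2,3,4$ would be fixed by the geometry of $P_1$ alone, so $s_a$ and $s_c$ could not be free parameters. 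The correct picture is $\tau(P_1)=P_2$. The points $1,2$ are then the midpoints on $a$ between the seam feet of $P_1$ and of $P_2$, and $s_a$ is half the twist along $a$; likewise for $c$.

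\textbf{The uniqueness step.} Let $h$ be the length of the seam of $P_1$ from $a$ to $c$, so $\cosh h=(\cosh^2x+\cosh t)/\sinh^2x$. Orient $a$ and $c$ so that your two formulas describe disjoint arcs of $b$ and $d$; with the other sign the system has no solution at all. Then $\gamma=0$, $\beta=1$ and $\alpha=\cosh h$, i.e.
\[
F(u,v)=\tfrac{\cosh h+1}{2}\cosh(u+v)+\tfrac{\cosh h-1}{2}\cosh(u-v).
\]
Subtracting the two equations gives $(\cosh h-1)\sinh(s_a-s_c)\sinh x=0$, so $s_a=s_c$. Then $\cosh(2s_a+x)=\frac{3-\cosh h}{1+\cosh h}\cosh x$, whose solutions are $2s_a+x=\pm w$. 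So the reduced equation is not monotone, and the solution in the twist plane is not unique.

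The two roots are exchanged by $s\mapsto -s-x$ in both coordinates. This is the mirror symmetry that fixes $a$ and $c$ and exchanges $b$ and $d$ (relabel $1\leftrightarrow 2$, $3\leftrightarrow 4$). The two roots therefore give isometric surfaces, and the lemma holds in that sense. They are not related by exchanging $a$ and $c$: that relabeling ($1\leftrightarrow 3$, $2\leftrightarrow 4$, i.e.\ the involution $\psi$ of Corollary \ref{cor-pair2}) acts by $(s_a,s_c)\mapsto(s_c,s_a)$ and fixes both roots.

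The same computation shows your convexity fallback cannot work as stated. $\{L(b)=2x\}$ is a closed convex curve in the twist plane and $\{L(b)=L(d)\}$ is the line $s_a=s_c$, so they meet in two points. Since $\psi$ fixes that line pointwise, it cannot collapse the two points to one.

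\textbf{Comparison with the paper.} The paper avoids all of this. The four half-arcs bound a hyperbolic rhombus in the pant $B$ bounded by $t_1,a,c$. Its diagonals (halves $e$ of $1-3$ and $f$ of $2-4$) bisect each other perpendicularly at the midpoint $M$ of the $a$--$c$ seam $p$; this gives your relation $s_a=s_c$ for free. The right triangles $1M2$ and $1PM$ then give $\cosh e\cosh f=\cosh x$ and $\sinh e\sinh f=\sinh x\sinh(L(p)/2)$, which determine $\{e,f\}$. The swap $e\leftrightarrow f$ there is exactly the mirror ambiguity above.
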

\begin{proof}
The proof will be divided into two pieces. 

a) The geodesics $a, b, c$ and $d$ are each made up of two geodesics arcs (that will be called halves) with endpoints on the fixed points. Four of these halves contained in $a,b,c$ and $d$ form a (hyperbolic) rhombus. Therefore, the two diagonals in this rhombus $1-3$ of length $2e$ and $2-4$ of length $2f$ intersect perpendicularly. Moreover, $1-3$ halves the angle in 1 between $a$ and $d$.

b) $S$ contains the $(0,3)$-subsurface $B$ with boundary components $t_{1}$, $a$, $c$. $B$ is determined by $x$ and $t$. By a), the geodesic segments $e$ and $f$ intersect perpendicularly in $M$ where $M$ is the center of the common perpendicular $p$ in $B$ between $a$ and $c$. The length $L(p)$ of $p$ is determined by $x$ and $t$. Let $P$ be the intersection point between $p$ and $a$, compare Figure \ref{fig-ring1}.

By a), the angle in 1 between $a$ and $e$ is the same as the angle in 1 between $e$ and $d$, we call it $\gamma$ (compare Figure \ref{fig-ring1}). By hyperbolic trigonometry, we have 
$$\sin(\gamma) =\,\frac{\sinh(f)}{\sinh(x)}\,=\,\frac{\sinh(L(p)/2)}{\sinh(e)}\:.$$
Therefore, $\sinh(e)\sinh(f)$ is determined by $x$ and $t$. Since $\cosh(e)\cosh(f)=\cosh(x)$, $\cosh(e)\cosh(f)$ is also determined by $x$ and $t$. We conclude that $e+f$ as well as $e-f$ is determined by $x$ and $t$ from which the lemma follows. 
\end{proof}

\begin{figure} 
\centering
\includegraphics[width=0.4\textwidth]{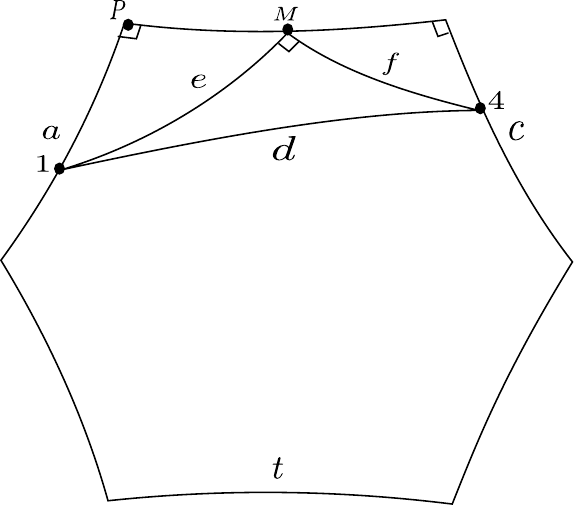}
\caption{The $(0,3)$-subsurface $B$ is composed by two isometric right-angled hexagons, one is showed in this figure.}
 \label{fig-ring1}
\end{figure}

\begin{figure} 
\centering
\includegraphics[width=0.4\textwidth]{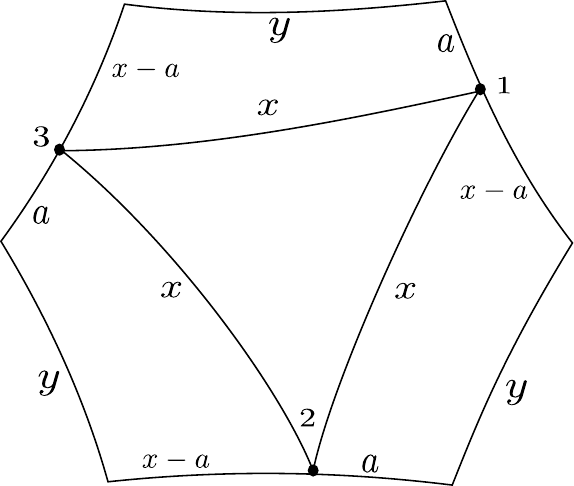}
\caption{The  right-angled hexagon $H$ of Corollary \ref{cor-ring1}.}
\label{fig-ring2}
\end{figure}

\begin{cor} \label{cor-ring1} Let $S\in \mathcal{T}$ be such that $S'=syst(S)$ is the 1-skeleton of a triangular prism. Let $2x$ be the length of these systoles. Let 
$$p,q,r\in S',\,p=1-4,\,q=2-5,\,r=3-6,$$ be
such that none of them belongs to a triple in $S'$. Let $B$ be a $(0,3)$-subsurface of $S$ with boundary geodesics $p,q,r$. Let $H$ be the right-angled hexagon contained in $B$ such that $1$, $2$, and $3$ lie in $H$ where $\{1-2,1-3,2-3\}$ is a triple in $S'$. Then there is an $a$, $0<a\leq x/2,$ such that $H$ is as in Figure \ref{fig-ring2}. 
\end{cor}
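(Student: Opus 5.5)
The plan is to describe $H$ as a right-angled hexagon with alternate sides of length $x$ on $p$, $q$, $r$. I then locate the Weierstrass points $1,2,3$ on these three sides and show that each sits at the same distance $a$ from a corresponding vertex. I am reading Figure \ref{fig-ring2} as showing this picture: each of $1$, $2$, $3$ lies on one of the $x$-sides, at distance $a$ from one end and $x-a$ from the other. If the figure shows something different, the last step will have to be adjusted.

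\emph{Step 1: the hexagon.} Since $p,q,r$ all have length $2x$, the pair of pants $B$ is cut by its three seams (the common perpendiculars between boundary components) into two isometric right-angled hexagons. Each has alternate sides of length $x$ lying on $p,q,r$. The other three sides also have a common length $s$, determined by $\cosh s=\cosh x/(\cosh x-1)$ via the hexagon formula. The points $1=p\cap\{1-2,1-3\}$, $2$ and $3$ lie on $p$, $q$, $r$ respectively. They lie in the same hexagon $H$ because the triple triangle with vertices $1,2,3$ and sides of length $x$, which is half of the $(1,1)$-subsurface from Lemma \ref{lem-triple}, is disjoint from $p,q,r$. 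So it lies in $B$, and I take $H$ to be the hexagon containing it. Here I must check that this triangle crosses no seam. I expect to argue this using the fact that the seams are the unique shortest arcs between boundary components, while the triangle sides are arcs of systoles.

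\emph{Step 2: symmetry.} Next I use the automorphisms of the prism configuration. The graph automorphisms induced via Proposition \ref{prop-graph} include an order-three symmetry $1\to2\to3$, $4\to5\to6$. Geometrically this is realized by the rotation of the three-element family of disjoint pairs, using the involutions $\psi$ of Lemma \ref{lem-pair} together with the isometry of the two $(1,1)$-subsurfaces from Proposition \ref{prop-triple}. I must check that the rotation is a genuine isometry, not merely a graph automorphism. This rotation permutes $p,q,r$ cyclically and maps $H$ to itself. Hence the position of $1$ on the $p$-side equals, up to the cyclic order, the positions of $2$ and $3$ on their sides. This yields a single parameter $a$, measured consistently (say counterclockwise) along each $x$-side.

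\emph{Step 3: the bounds.} If $a=0$, then a Weierstrass point would lie on a seam endpoint. The reflection of $B$ across its seams, combined with $\tau$, would then force the arc $1-2$ to run along a seam. That arc would have length $s$, contradicting the systole equality, or would force a fourth systole through $1$, giving a 4-star and hence the Bolza surface by Corollary \ref{cor-star}, which has no prism graph. So $a>0$. Finally, $a\le x/2$ can be arranged by reversing the orientation convention, i.e.\ replacing $a$ by $x-a$, which is a relabelling of the figure.

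The main obstacle is Step 2: producing an actual isometry of order three (or an appropriate reflection) rather than just a graph symmetry. I would try first to build it explicitly from the isometric triples in $A$ and $A'$ and the twist that defines the family in part 3 of Proposition \ref{prop-triple}.
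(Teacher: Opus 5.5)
You have the right target (each of $1,2,3$ on an $x$-side of $H$, splitting it into lengths $a$ and $x-a$ in the same cyclic pattern), and an isometry $\rho$ of order three would deliver it. Such a $\rho$ permutes $\{B,B'\}$ and $\{H,H'\}$, and an element of order three cannot swap two objects, so $\rho(H)=H$ and the offsets coincide. This would also make your unfinished Step~1 unnecessary.

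The gap is that $\rho$ is never produced, and the tools you name do not give it. The involutions $\psi$ of Lemma~\ref{lem-pair} have order two and exchange $A$ and $A'$. An isometry $A\to A'$ says nothing about self-maps of $A$. Proposition~\ref{prop-graph} goes from isometries to graph automorphisms, not back.

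What your route needs is rigidity of the one-holed torus $A$ (boundary $z$ of length $2\zeta$) carrying the equal-length triple $\{1-2,2-3,1-3\}$. In the Fricke identity $X^2+Y^2+Z^2-XYZ=2-2\cosh\zeta$, the case $X=Y=Z$ has a unique solution $X>2$. So the marked structure on $A$ is determined by $\zeta$, and the order-three mapping class permuting the triple is realized by an isometry $\rho_A$ that rotates $z$ by a third of its length. The same holds for $A'$. Since twisting along $z$ commutes with rotations of $z$, $\rho_A$ and $\rho_{A'}^{\pm1}$ glue to an isometry of $S$ for every twist.

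The paper's one-line proof cites Lemma~\ref{lem-ring1}; the natural way to read it is as a local argument with no global isometry. Each square face of the prism, e.g.\ $\{4-1,1-2,2-5,5-4\}$, is a 4-ring of equal length in the $(1,2)$-subsurface $S\setminus r$. Halves of its elements bound a rhombus in $B$ with one side on $p$ and one on $q$. The proof of Lemma~\ref{lem-ring1}, applied with $p,q$ as the opposite pair (its pair of pants can then be taken to be $B$), puts the orthogonal crossing point $M$ of the diagonals at the midpoint of the seam of $B$ from $p$ to $q$. The half-turn of the rhombus about $M$ gives $d(1,s_p)=d(5,s_q)$ and $d(4,s_p)=d(2,s_q)$ for the feet $s_p,s_q$ of that seam. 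Running over the three square faces forces a single offset $a$. Once completed, your route proves more (a $\mathbb{Z}_3$-symmetry of every prism surface); the paper's needs only the rhombus.

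Your Step~3 argument for $a>0$ is also wrong as stated. If $1$ were a vertex of $H$, the cyclic pattern would make all six vertices of $H$ Weierstrass points. The points $1$ and $2$ would then sit at vertices \emph{not} joined by a seam, so nothing forces the arc $1-2$ onto a seam. Moreover, the reflection of $B$ in its seams is an isometry of $B$ that need not extend to $S$. What actually fails is this: each seam would then join two Weierstrass points, e.g.\ $1$ to $5$. Together with its $\tau$-image in $B'$ it closes up to a simple closed geodesic $1-5$ of length $2s$, where $\cosh s=\cosh x/(\cosh x-1)\le\cosh x$ because $\cosh x\ge(5+\sqrt{17})/4>2$ on the prism locus (Proposition~\ref{prop-triple}, Corollary~\ref{cor-triple}). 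This contradicts $\mathrm{sys}(S)=S'$. Finally, the two triangles of the triple are not ``half'' of $A$: the complement of the triple in $A$ is two triangles and an annulus around $z$.
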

\begin{proof}
This follows from Lemma \ref{lem-ring1}. 
\end{proof}

\begin{prop} \label{prop-ring1} 
There is no $S\in\mathcal{T}$ such that $\mathrm{sys}(S)=\{1-2,2-3,3-4,4-5,2-5,4-6\}$. 
\end{prop}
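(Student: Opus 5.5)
The plan is to reduce the statement to a configuration already constrained by Lemma~\ref{lem-pair} and Proposition~\ref{prop-pair}, and then rule that configuration out by hyperbolic trigonometry. Suppose $S$ exists with $\mathrm{sys}(S)=\{1-2,2-3,3-4,4-5,2-5,4-6\}$. First I record the combinatorics of the systolic graph. It is the 4-ring $R=\{2-3,3-4,4-5,5-2\}$ with a pendant edge $1-2$ at vertex $2$ and a pendant edge $4-6$ at vertex $4$. In particular, $2$ and $4$ are centres of 3-stars.

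The graph is connected with six vertices, so $\mathrm{sys}(S)$ fills. The set contains no triple, since no three of these edges form a 3-ring. If $R$ filled, Proposition~\ref{prop-4ring} would make $S$ the Bolza surface with 12 systoles, which is absurd. Hence $R$ does not fill, and by Lemma~\ref{lem-4ring} it lies in a unique $(1,2)$-subsurface $C$. Next, $\{1-2,2-3\}$ and $\{4-5,4-6\}$ are disjoint pairs. Lemma~\ref{lem-pair} therefore applies: there is a separating geodesic $z$ bounding isometric $(1,1)$-subsurfaces $A\ni 1,2,3$ and $A'\ni 4,5,6$. There is also an involution $\psi$ exchanging $A$ and $A'$ and preserving $\mathrm{sys}(S)$. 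The systoles crossing $z$ are exactly $2-5$ and $3-4$, and $\psi$ must permute them. Since $\psi$ maps $\{1-2,2-3\}$ to $\{4-5,4-6\}$, it sends the intersection point $2$ to $4$. Comparing with the proof of Proposition~\ref{prop-pair}, this places us in case (3): six systoles containing a non-filling 4-ring.

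The point is that this graph is more special than a generic case (3) configuration. In the normalisation of Lemma~\ref{lem-pair}, the intersection point of the disjoint pair in $A$ is itself an endpoint of a systole crossing $z$. In the notation of Lemma~\ref{lem-pair}, $2$ plays the role of the fixed point $a\cap b$, and the crossing systole $2-5$ joins it to a point of $A'$ that is not $a'\cap b'$. I will show that this forces a contradiction in one of two ways.

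The first is a length comparison in the spirit of part (1) of the proof of Proposition~\ref{prop-pair}. Let $c=1-3$ be the shortest geodesic completing $\{1-2,2-3\}$ to a triple. By hypothesis $L(c)>2x$, so $t(c)$ is strictly longer than $t(a)$ and $t(b)$. I will locate the arc of $2-5$ inside the hexagons $K$ and $K'$ obtained by cutting $A$ and $A'$ along $c$ and $c'$. Using the isometry $\psi$, I expect to build a geodesic crossing $z$ that is strictly shorter than $2-5$ (or than $3-4$), which contradicts these being systoles.

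The second uses Lemma~\ref{lem-ring1} and Corollary~\ref{cor-ring1} to parametrise $C$ by $x$ and the boundary half-length $t$. Inside the rhombus picture of Lemma~\ref{lem-ring1}, I will compute the three angles at the 3-star centred at $2$, namely those between $1-2$, $2-3$ and $2-5$. I then expect one angle to be forced below $\pi/4$, contradicting Proposition~\ref{prop-star}. The equality case is excluded because $S$ would then be Bolza.

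The main obstacle is this last geometric step. The combinatorics only pins the configuration down to case (3) of Proposition~\ref{prop-pair}, and that case must genuinely be allowed for other labellings (presumably the 4-ring-plus-pendants configurations the paper keeps). So the contradiction has to come from the position of the vertex $2$, that is, from the fact that the pendant edge and a crossing edge meet at the same Weierstrass point.

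My first attempt will be the explicit trigonometry in the right-angled hexagon $H$ of Corollary~\ref{cor-ring1}. I will write $\cosh$ of the length of $2-5$ in terms of $x$, $\zeta$ and the twist along $z$, using equations (\ref{eq-1})--(\ref{eq-2})-type relations. I will then compare it with the competing crossing geodesic through $1$. If that comparison is inconclusive, I will fall back on the angle bound at vertex $2$.
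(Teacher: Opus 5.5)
\textbf{There is a genuine gap: the proposal reduces correctly to case (3) but contains no argument for the contradiction, and its guiding heuristic rests on a false premise.}

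Your reduction to case (3) of Proposition~\ref{prop-pair} is correct, but it only restates the hypothesis. Your claim that this graph is ``more special than a generic case (3) configuration'' is false. In case (3) the crossing systole always joins the intersection point of one disjoint pair to a non-intersection point on the other side. In part (3) of that proof, $e=1-5$ and $\psi(e)=3-4$. Together with $1-2,1-3,4-5,4-6$, the graph is the 4-ring $\{1-3,3-4,4-5,5-1\}$ with pendants $1-2$ and $4-6$ at the opposite vertices $1$ and $4$. The subcases $e=3-4,1-6,2-4$ give the same graph up to relabelling. So Proposition~\ref{prop-ring1} says exactly that case (3) never occurs, which is why no such graph appears in Theorem~\ref{thm37}. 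The coincidence ``pendant vertex $=$ endpoint of a crossing systole'' that you want to exploit is automatic and carries no information. Everything after the reduction still has to be proved, and both of your routes end at ``I expect''.

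Route 1 cannot be done as a direct comparison at $S$. The arc of $2-5$ in $A$ passes through $2$, so it is at least as long as the long perpendicular $t(c)$, where $c$ is the shortest $1-3$ curve. The natural competitor $1-6$ (the curve disjoint from the ring) crosses $A$ and $A'$ in arcs homotopic to the short perpendiculars $t(2-3)$ and $t(4-5)$. But the length of a geodesic crossing $z$ also depends on where these arcs' endpoints sit along $z$, i.e.\ on the twist. Unlike in part (1) of Proposition~\ref{prop-pair}, no symmetry equalises those offsets.

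The paper gets the missing control in three steps.
\begin{enumerate}
\item[(b)] The angle between a pendant and $t=1-6$ cannot be $\pi/2$: a right angle would force $L(1-6)$ to be twice a height of the triangle of the triple $\{3-4,4-5,3-5\}$, hence $<2x$. So twisting along $t$ can keep the pendants at length $2x$.
\item[(c)] By Lemma~\ref{lem-ring1}, one deforms keeping the ring at length $2x$ and lengthening $t$ until the rhombus diagonals $2-4$ and $3-5$ are equal. Propositions~\ref{prop-star} and \ref{prop-triple} show no new systole appears.
\item[(d)] Then $2$ and $5$ are midpoints of the hexagon sides on $2-3$ and $4-5$. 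A twist along $1-6$ changes none of $L(1-6)$, $L(2-3)$, $L(4-5)$, and yields a surface with equal-length pairs $\{1-2,1-5\}$, $\{3-6,4-6\}$. There the argument of part (1) of Proposition~\ref{prop-pair} gives $L(1-6)<L(2-3),L(4-5)$, and this inequality transfers back to $S$.
\end{enumerate}
The key missing idea is this normalisation, followed by a twist along the curve disjoint from the ring rather than along $z$.

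Route 2 has no mechanism. Proposition~\ref{prop-star} only says that each of the three angles at $2$, which sum to $\pi$, is at least $\pi/4$. That is easily satisfied, and you give no relation from Lemma~\ref{lem-ring1} that would push one of them below $\pi/4$.
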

\begin{proof}
The proof will be broken into four parts, labeled a, b, c and d for ease of reference.

a) Suppose that such an $S$ exists. Let $2x$ be the length of the systoles of $S$. Let $t=1-6$ be the unique simple closed geodesic that intersects none of $2-3,3-4,4-5,2-5$; let $2\tau$ be the length of $t$. Let $z$ be the separating closed geodesic that separates $1-2,1-6$ from $3-4,4-5$; let $2\zeta$ be the length of $z$. Let $p=2-4$, $q=3-5$ such that neither $p$ nor $q$ intersects $t$; recall from Lemma \ref{lem-ring1} and its proof that $p$ and $q$, as the diagonals of a hyperbolic rhombus, intersect perpendicularly.

b) Suppose that $1-2$ and $1-6$ intersect perpendicularly. This implies $\sinh(\tau)\sinh(x)=\cosh(\zeta/2)$. By a), $\{3-4,4-5,3-5\}$ form a triple and there is a set of halves contained in $3-4, 4-5$ and $3-5$ form a triangle $T$. In $T$, the angle between $3-4$ and $4-5$ is strictly smaller than $\pi/2$ by Proposition \ref{prop-star}. Therefore, the ``height'' $h$ in $T$ from $3$ to $4-5$ is strictly smaller than $x$. The symbol $h$ will also be used to denote the length of $h$. Then $\sinh(h)\sinh(x)=\cosh(\zeta/2)$. It follows that $h=\tau<x$, contradicting that the systoles of $S$ have length $2x$. This shows that the angle $\phi$ between $1-2$ and $1-6$ is bounded away from $\pi/2$. By the same argument, the angle between $5-6$ and $1-6$ is bounded away from $\pi/2$.

c) Assume $S$ is such that $p$ and $q$ have different lengths. By Lemma \ref{lem-ring1}, there then exist surfaces in the neighborhood of $S$ in $\mathcal{T}$ such that $2-3,3-4,4-5,2-5$ all keep the length $2x$ while $t$ gets strictly longer than $\tau$ and the difference of the lengths of $p$ and $q$ becomes strictly smaller. Moreover, by a twist deformation along $t$ and by b), $1-2$ and $5-6$ keep the same length $2x$. This can be continued until $p$ and $q$ have the same length or until we obtain a further systole. Note that during these changes, $t$ becomes strictly longer than $2\tau$ so  that $1-6$ cannot become a systole. Assume that we obtain a further systole $\sigma$ in $\Sigma$. Then $\sigma=2-4$ or $\sigma=3-5$ is impossible by Proposition \ref{prop-star}. If $\sigma= 1-3$, by the symmetry of $\Sigma$, $4-6$ is also a systole and, by Proposition \ref{prop-triple}, $1-6$ is a systole which was excluded. By the same argument $\sigma\in\{1-3,3-6,5-6\}$ is impossible. Finally, $\sigma\in \{1-4,2-6\}$ is impossible by Proposition \ref{prop-star}, considering that four systoles intersect in $4$ or in $2$.

d) By c) we may assume that $p$ and $q$ have the same length in $S$. Consider the $(0,3)$-subsurface $B$ of $S$ with boundary geodesics $1-6$, $2-3$ and $4-5$ and let $H$ be the right-angled hexagon contained in $B$ such that $2$ and $5$ lie on $H$. In $H$, let $C_{16}$, $C_{23}$, $C_{45}$ be the centers of the sides  corresponding to $1-6$, $2-3$,$4-5$, respectively. Since $p$ and $q$ have the same length, it follows from Lemma \ref{lem-ring1} that $2=C_{23}$, $5=C_{45}$. Moreover, $2-3$ must be strictly shorter than the geodesic segment $\sigma_{12}$ between $2=C_{23}$ and $C_{16}$. 

\begin{figure} 
\centering
\includegraphics[width=0.4\textwidth]{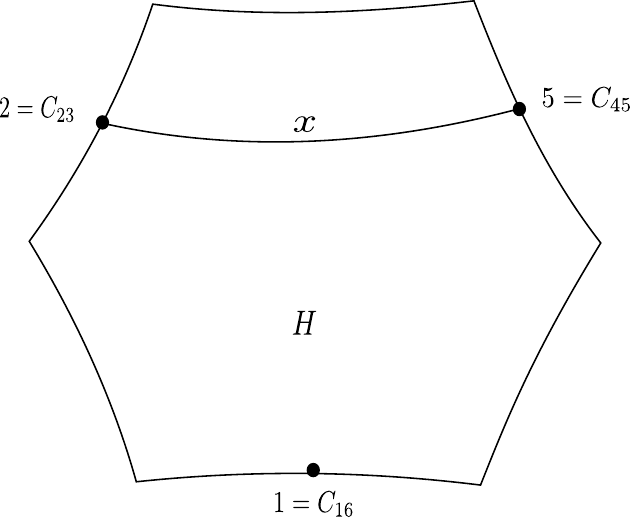}
\caption{Figure from part d) of the proof of Proposition \ref{prop-ring1}.}
\label{fig-ring5}
\end{figure}

Suppose now that $\Sigma$ is the surface obtained by a twist deformation of $S$ along $1-6$ such that $C_{16}$ is the point $1$, see Figure \ref{fig-ring5}. Since the geodesic segment $\sigma_{15}$ between $C_{45}$ and $C_{16}$ has the same length as $\sigma_{12}$, we have simple closed geodesics $1-2$ and $1-5$ in $\Sigma$ of the same length. By the symmetry of $\Sigma$, there are also simple closed geodesics $3-6$ and $4-6$ of the same length as $1-2$ and $1-5$. It then follows by the argument in part 1) of the proof of Proposition \ref{prop-pair} that $1-6$ is strictly shorter than $2-3$ and $4-5$. Since the lengths of $1-6$, $2-3$, and $4-5$ are not changed by a twist deformation along $1-6$, this would also be the case in $S$, a contradiction. Therefore, $S$ cannot exist and the proposition is proved. 
\end{proof}

\begin{prop} \label{prop-ring2} There is no $S\in\mathcal{T}$ such that $\mathrm{sys}(S)=\{1-2, 2-3, 3-4, 4-5, 3-6\}$. 
\end{prop}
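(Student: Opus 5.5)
Suppose such an $S$ exists and let $2x$ be the systole length. The plan is to argue by contradiction, in the same way as Proposition \ref{prop-ring1}: deform $S$ so that no new systole appears, reach a symmetric configuration, and then show that in that configuration some geodesic disjoint from the configuration is too short.

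\textbf{Step 1 (setup).} The systolic graph is a 4-chain $1-2,2-3,3-4,4-5$ with an extra leaf $3-6$ at $3$. So $\{2-3,3-4,3-6\}$ is a 3-star and $\{1-2,2-3,3-4\}$ is a 3-chain. By the proof of Lemma \ref{lem-4ring}, the 3-star lies in a unique $(1,2)$-subsurface $C$. The geodesic $1-2$ meets only $2-3$, and $4-5$ meets only $3-4$. I would identify the curve that is disjoint from the chain part, namely $t=1-5$, the unique simple closed geodesic missing $2-3$, $3-4$ and $3-6$. Let $2\tau$ be its length. I would also identify the separating geodesic $z$ that separates $\{1-2,2-3\}$-type curves from $\{3-4,4-5\}$-type curves. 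By Proposition \ref{prop-star}, all angles at $2$, $3$ and $4$ are strictly greater than $\pi/4$. Since three systoles meet at $3$, the three angles there sum to $\pi$.

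\textbf{Step 2 (deformation).} As in part c) of the proof of Proposition \ref{prop-ring1}, I would apply Lemma \ref{lem-ring1}-type rigidity to the $(1,2)$-subsurface containing the star at $3$. The aim is to move within the set where the five curves keep equal length $2x$, making $t$ longer and making the two diagonals through $3$ equal. Twisting along $t$ keeps $1-2$ and $4-5$ at length $2x$, using a part b)-style argument that the angle at $2$ between $1-2$ and $2-3$ stays away from $\pi/2$. During this deformation I would check that no new systole can appear, by listing the candidates:
\begin{itemize}
\item new edges at $3$ would give a 4-star, which Corollary \ref{cor-star} restricts to the Bolza surface;
\item edges closing a 3-ring would give a triple, and Proposition \ref{prop-triple} forces a 9- or 12-systole configuration incompatible with $1-5$ being long;
\item edges creating disjoint pairs would force the situation of Proposition \ref{prop-pair}, namely a 5-chain, a 6-ring or a 6-element set, and the 6-element case with a non-filling 4-ring is excluded by Proposition \ref{prop-ring1}.
\end{itemize}
We may therefore assume $S$ is in the symmetric position.

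\textbf{Step 3 (contradiction).} In the symmetric position, $\{1-2,2-3\}$ and $\{3-4,4-5\}$ are nearly disjoint pairs across $z$. I would use the involution of Lemma \ref{lem-pair} (or its analogue) exchanging them, in the way part d) of Proposition \ref{prop-ring1} uses it. Twisting along the curve through $3$ and $6$ would produce geodesics of equal length on both sides. Then the argument of part 1) of the proof of Proposition \ref{prop-pair} gives a geodesic crossing $z$ that is shorter than the one realised by $3-6$. That contradicts $3-6$ being a systole, since twist lengths are unaffected.

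\textbf{Main obstacle.} The hardest step is Step 2: controlling the deformation so the set of systoles neither grows nor loses $1-2$ and $4-5$. In particular, one must verify that the angle conditions are bounded away from degenerate values, as in part b) of Proposition \ref{prop-ring1}. I would first try the height comparison $\sinh(h)\sinh(x)=\cosh(\zeta/2)$ from part b) to rule out right angles.
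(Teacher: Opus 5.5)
There is a genuine gap, and the decisive one is in Step 3. The argument of part 1) of the proof of Proposition \ref{prop-pair} needs two disjoint pairs of systoles. Lemma \ref{lem-pair} uses them to produce a separating geodesic $z$ with \emph{isometric} halves $A\cong A'$ and the involution $\psi$. The comparison of the segments $t(a)$, $t(c)$ across $z$ uses exactly this isometry.

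Your configuration contains no disjoint pair, since $2-3$ and $3-4$ meet at the Weierstrass point $3$. Let $z$ bound the one-holed torus $A$ filled by $1-2$ and $2-3$. Then $A$ contains the points $1,2,3$, both $3-4$ and $3-6$ cross $z$, and the complementary one-holed torus contains only one systole, $4-5$. So no separating geodesic separates $\{1-2,2-3\}$ from $\{3-4,4-5\}$, and nothing forces $A$ and $A'$ to be isometric. ``Nearly disjoint pairs'' cannot be fed into that argument, and the geodesic shorter than $3-6$ is never produced.

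Step 2 is also not grounded. Lemma \ref{lem-ring1} is a rigidity statement for a $4$-ring, and its proof rests on the rhombus with perpendicular diagonals. For the $3$-star $\{2-3,3-4,3-6\}$ in $S\setminus t$ there is no rhombus and no pair of ``diagonals through $3$''. Counting parameters, $S\setminus t$ is not determined by $x$ and the length of $t$: one degree of freedom remains. So the ``symmetric position'' you deform to is not defined.

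The missing idea is that the paper does not try to prevent new systoles; it forces one to appear and exploits it.
\begin{itemize}
\item It uses the pants decomposition $\{1-2,4-5,3-6\}$, whose two pants $B$, $B'$ are isometric.
\item It raises the common length to $2(x+\varepsilon)$ and twists along $1-2$ and $4-5$ so that $2-3$ and $3-4$ keep up, until a further systole appears.
\item The case analysis via Corollary \ref{cor-star} and Propositions \ref{prop-pair} and \ref{prop-ring1} shows that the resulting surface $\Sigma$ contains a triple. By Proposition \ref{prop-triple}, $\Sigma$ then has the triangular-prism systolic graph.
\end{itemize}
Your remark that a triple would force $t=1-5$ to be a systole is correct: $1$ and $5$ are the two vertices not adjacent to $3$, so they share a triangle in any prism. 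But in the paper this is the configuration one runs \emph{into}, not one to avoid.

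The contradiction is then local. Corollary \ref{cor-ring1} gives the explicit right-angled hexagon of $\Sigma$. The surfaces just before $\Sigma$, which still have systole set $\{1-2,2-3,3-4,4-5,3-6\}$, are drawn as perturbations of it (Figure \ref{fig-ring3}). Comparing the resulting quadrilaterals (Figure \ref{fig-ring4}) yields both $a>c>b$ and $a<c$. Your Step 3 lacks any such explicit geometric input at the degenerate configuration.
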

\begin{proof}
The proof will be broken up into two pieces. 

 a) Suppose that such an $S$ exists and let $2x$ be the length of the five systoles. There are isometric $(0,3)$-subsurfaces $B$ and $B'$ of $S$ with boundary geodesics $1-2$, $4-5$, $3-6$. $B$ is composed of two right-angled hexagons $H$ and $H'$; we may assume that $3$ lies on $H$. Note that we may replace $x$ by $x+\varepsilon$ for $\varepsilon>0$ and by twist deformations along $1-2$ and along $4-5$, we may prolong the lengths of $2-3$ and $3-4$ to $2(x+\varepsilon)$. This can be done as long as there is not a further systole. Let $\sigma$be a further systole in  $\Sigma$. Then $\sigma$ cannot pass through $3$ since this would contradict Corollary \ref{cor-star} as there would be four systoles passing through $3$. Let $\sigma\in\{1-4, 2-5, 1-6, 5-6, 1-5\}$. Then $\mathrm{sys}(\Sigma)$ contains a disjoint pair and it follows from Proposition \ref{prop-pair} and Proposition \ref{prop-ring1} that $\mathrm{sys}(\Sigma)$ contains a triple. This is also the case if $\sigma\in \{2-4, 2-6, 4-6\}$. We therefore may assume that $S$ is in a neighborhood of a $\Sigma$ where $\mathrm{sys}(\Sigma)$ is a 1-skeleton of a triangular prism, see Proposition \ref{prop-triple}.

\begin{figure} 
\centering
\includegraphics[width=0.7\textwidth]{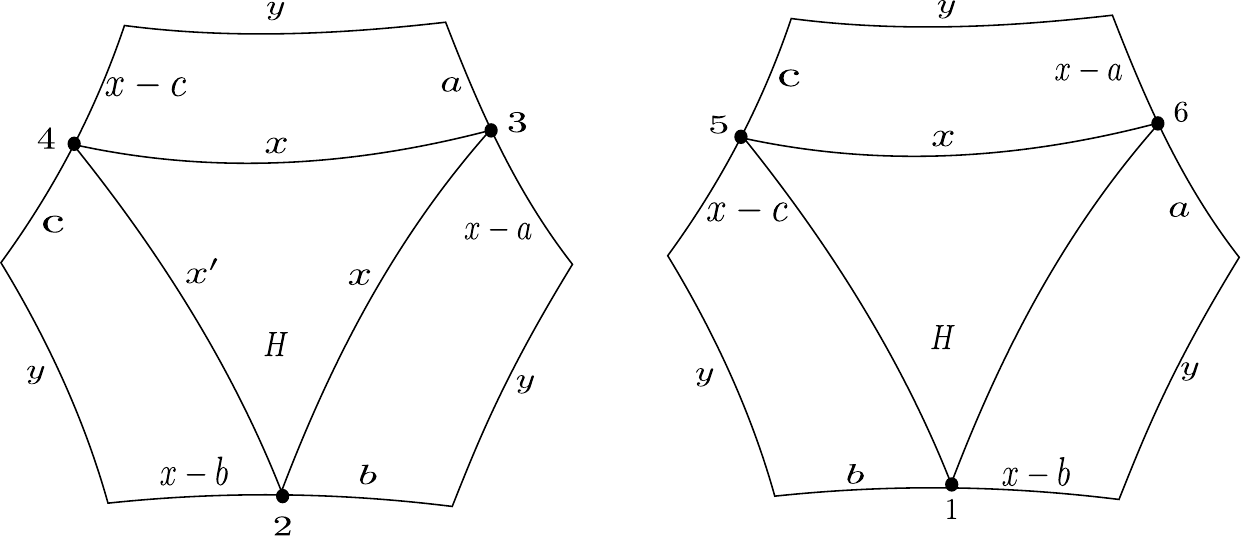}
\caption{Figure from part b) of the proof of Proposition \ref{prop-ring2}.}
\label{fig-ring3}
\end{figure}

\begin{figure} 
\centering
\includegraphics[width=0.7\textwidth]{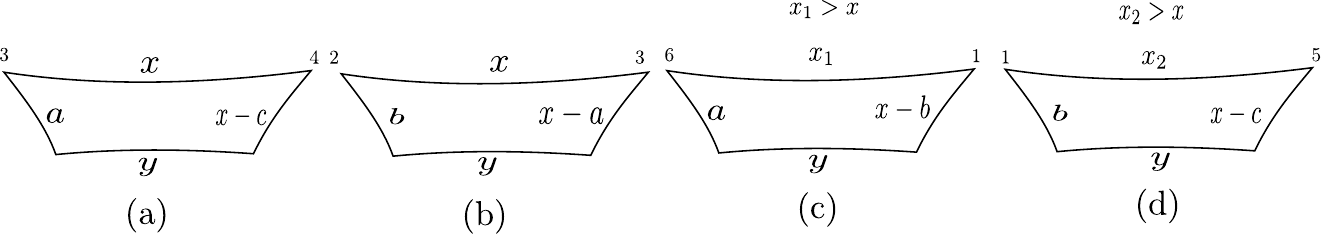}
\caption{The quadrilaterals from part b) of the proof of Proposition \ref{prop-ring2}.}
\label{fig-ring4}
\end{figure}

b) By Corollary \ref{cor-ring1} and Figure \ref{fig-ring2} we know how we may represent the systoles of $\Sigma$. Since $S$ is in a neighborhood of $\Sigma$ we may represent $S$ as in Figure \ref{fig-ring3}. We then obtain the quadrilaterals as in Figure \ref{fig-ring4}. Comparing Figure \ref{fig-ring4}(a) and Figure \ref{fig-ring4}(c), we see that $x-b>x-c$, hence $b<c$. Comparing Figure \ref{fig-ring4}(b) and Figure \ref{fig-ring4}(d), it follows that $x-a<x-c$ so that $c<a$ and thus $a>c>b$. On the other hand, comparing Figure \ref{fig-ring4}(a) and Figure \ref{fig-ring4}(b) and considering $a>b$, we have $x-c<x-a$ so that $a<c$. This is a contradiction, proving the proposition. 
\end{proof}

\begin{prop} \label{prop-ring3} There is no $S\in \mathcal{T}$ such that $\mathrm{sys}(S)$ is a long $Y$.
\end{prop}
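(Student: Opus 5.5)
Label the long $Y$ as $a=1-2$, $b=2-3$, $c=3-4$, $d=3-5$. Here $\{a,b,c\}$ is a 3-chain, $\{b,c,d\}$ is a 3-star at the Weierstrass point $3$, and $a$ is disjoint from $c$ and $d$. The fixed point $6$ lies on none of these geodesics. Suppose $\mathrm{sys}(S)=\{a,b,c,d\}$ and let $2x$ be the common length. I would follow the strategy of Proposition \ref{prop-ring2}: deform $S$ inside the stratum-closure while keeping the four geodesics systoles, until a further systole appears, and then contradict the earlier classification.

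The first step is the deformation. The 3-star $\{b,c,d\}$ lies in a unique $(1,2)$-subsurface $C$, and $a$ crosses its boundary. The surface $S$ is a point of $E(\{a,b,c,d\})$, which has dimension $6-3=3$, so there is room to move. Concretely, lengthen all four systoles simultaneously to $2(x+\varepsilon)$ by combining twist deformations with length changes, exactly as in part a) of the proof of Proposition \ref{prop-ring2}. Since no point of $\mathcal{T}$ has a bounded-away-from-infinity family with only four systoles along such a path (the systole is bounded above by the Bolza value, Corollary \ref{cor-triple}), the process must terminate at a surface $\Sigma$ where a fifth geodesic $\sigma$ becomes a systole.

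The second step is a case analysis on $\sigma$. If $\sigma$ passes through $3$, we get a 4-star, so $\Sigma$ is the Bolza surface by Corollary \ref{cor-star}. This surface is away from the stratum, which I handle by choosing the deformation direction so that it is avoided, or alternatively by noting that near Bolza every systole set contains a triple. If $\sigma$ creates a 3-ring, such as $2-4$, $2-5$ or $4-5$, then Lemma \ref{lem-triple} and Proposition \ref{prop-triple} force $\Sigma$ to be near the prism surface. If $\sigma$ is one of $1-4$, $1-5$, $1-6$, $4-6$, $5-6$, $2-6$, then $\mathrm{sys}(\Sigma)$ contains a disjoint pair, for instance $\{a,\sigma\}$ against a pair in the star. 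In that case Proposition \ref{prop-pair}, Corollary \ref{cor-pair2}, Proposition \ref{prop-ring1} and Proposition \ref{prop-ring2} show that $\mathrm{sys}(\Sigma)$ either contains a triple or is one of the configurations already excluded. So in every case $S$ lies in a small neighbourhood of a surface $\Sigma$ whose systolic graph is the triangular prism, with the long $Y$ sitting as a subgraph: two prism edges of one triangle and the two rungs from its vertex $3$, or the symmetric variant.

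The final step, and the main obstacle, is to rule out that a long $Y$ alone survives as the systole set near the prism surface. I would use Corollary \ref{cor-ring1}, which gives the explicit right-angled hexagon with parameter $a\le x/2$. Perturbing as in Figure \ref{fig-ring3} means introducing small parameters for the shifts of the Weierstrass points along the hexagon sides, and then writing the quadrilaterals that compute the lengths of the prism edges. The constraint is that the four long-$Y$ edges stay equal while the other five prism edges become strictly longer. I expect this to produce chains of inequalities among these parameters like the $a>c>b$ versus $a<c$ contradiction in Proposition \ref{prop-ring2}. The delicate part is choosing the right pairs of quadrilaterals to compare, using the symmetry from the involution of Lemma \ref{lem-pair}.
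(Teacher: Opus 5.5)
\textbf{There is a genuine gap.} Your first two steps only show that, after sliding $S$ along the deformation path, $S$ is close to a surface $\Sigma$ whose systolic graph is a triangular prism, or which is the Bolza surface. That is not a contradiction. The long $Y$ is a subgraph of both configurations, so whether long-$Y$ strata exist near such $\Sigma$ is exactly the content of the proposition. Your last paragraph leaves this as an expectation that quadrilateral comparisons as in Proposition \ref{prop-ring2} will give incompatible inequalities, and nothing in the proposal shows that they do. The situation is also less rigid than in Proposition \ref{prop-ring2}: with four equal lengths the equal-length locus is $3$-dimensional, and the comparisons used there are specific to that five-element configuration.

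The Bolza endpoint is not handled either. Your deformation raises the systole length, so terminating at the global maximum is a real possibility, and you cannot freely steer a deformation whose direction is forced by that requirement. Your alternative claim, that near the Bolza surface every systole set contains a triple, is false: the $3$-cells of the spine at $M_{12}$ are labelled by $4$-chains, and $5$-rings, $5$-chains and $6$-rings also occur there (Theorem \ref{mainthm}, Section \ref{spinecomputation}). A minor point: for $\sigma\in\{1-4,1-5,4-6,5-6\}$ there is no disjoint pair. These cases give a $4$-ring or the configuration of Proposition \ref{prop-ring2}, and if $\mathrm{sys}(\Sigma)$ has more than five elements you must feed it back through the earlier cases.

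\textbf{The missing idea.} The paper uses a deformation that \emph{preserves} the four lengths and pushes one chosen curve monotonically away. In your labelling:
\begin{itemize}
\item Let $z$ be the boundary of the $(1,1)$-subsurface filled by $3-4$ and $3-5$, and let $1-6$ be the unique geodesic disjoint from the star $\{2-3,3-4,3-5\}$.
\item Twisting along $z$ and along $1-2$ in a balanced way keeps $2-3$, hence all of $\mathrm{sys}(S)$, at length $2x$.
\item The paper shows the relevant angles stay away from $\pi/2$, using the arguments of part b) of Proposition \ref{prop-ring1} and part 1) of Proposition \ref{prop-pair}. Hence the twist can be chosen to lengthen $1-6$ strictly and shorten $2-6$, and continued until a new systole appears at some $\Sigma$.
\item The systole length is unchanged and below the Bolza value, so $\Sigma$ is not the Bolza surface. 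A case analysis like yours then shows its systolic graph is a prism.
\item Every prism containing $\{1-2,2-3,3-4,3-5\}$ has an edge joining $1$ and $6$; check the three possible triangles through $3$. That edge shares no Weierstrass point with the star, so it is $1-6$, whose length now exceeds $2x$.
\end{itemize}
This gives the contradiction without any local analysis near the prism family.
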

\begin{proof}
The proof will be broken up into four parts. 

 a) Suppose that such an $S$ exists, let $\mathrm{sys}(S)=\{2-3, 3-4,4-5,4-6\}$ and let $2x$ be the length of the systoles. Let $z$ be the boundary geodesic of the $(1,1)$-subsurface of $S$ that contains $4-5$ and $4-6$. Let $1-2$ be the unique simple closed geodesic that does not intersect $3-4,4-5,4-6$, and let $1-3$ be the shortest geodesic such that $\{1-2,1-3, 2-3)\}$ is a triple.

b) It follows from the argument in part b) of Proposition \ref{prop-ring1} that the angle between $1-2$ and $2-3$ and the angle between $1-3$ and $2-3$ are bounded away from $\pi/2$.

\begin{figure} 
\centering
\includegraphics[width=0.3\textwidth]{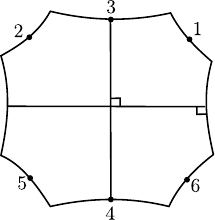}
\caption{The octagon $O$.}
\label{fig-ring6}
\end{figure}

c) Suppose the angles between $z$ and $3-4$ are $\pi/2$. Cutting $S$ along $5-6$ and $1-2$, we then obtain two isometric right-angled octagons $O$ and $O'$, where the octagon $O$ is shown in Figure \ref{fig-ring6} (see also part 2 of the proof of  Proposition \ref{prop-triple}). If the lengths of $1-3$ and $2-3$ in $O$  are equal, then 1 and 2 lie in the center of their respective sides in $O$. It then follows from the argument in part 1) of the proof of Proposition \ref{prop-pair} that the simple closed geodesics $1-6$ and $2-5$ are shorter than $3-4$. Since in $S$, $2-3$ is strictly shorter than $1-3$, it follows that in $S$, $1-6$ is strictly shorter than $3-4$. This shows that it is not possible that $z$ and $3-4$ intersect perpendicularly in $S$.

d) In $S$ we may execute a combined twist twist deformation along $z$ and along $2-3$ that keeps the length of $3-4$ unchanged and hence the lengths of all elements of $\mathrm{sys}(S)$ unchanged. Moreover, by b) and c), this combined twist deformation can be chosen such that the length of $1-2$ increases while the length of $1-3$ decreases. Again by b) and c), this can be done until there is a further systole $\sigma$ in $\Sigma$. Note that $\sigma$ must intersect $z$ and/or $2-3$. Note also that $\sigma$ cannot pass through 4 since this would contradict Corollary \ref{cor-star} (four systoles would pass through 4). If $\sigma\in\{3-5,3-6\}$, then $\mathrm{sys}(\Sigma)$ contains a triple. If $\sigma\in\{1-2,1-3\}$, then $\mathrm{sys}(\Sigma)$ contains a disjoint pair and hence a triple by Proposition \ref{prop-pair} and Proposition \ref{prop-ring1}. If $\sigma\in\{2-5,2-6\}$, then $\mathrm{sys}(\Sigma)$ contains a 4-ring which does not fill and hence a triple by Proposition \ref{prop-pair} and Proposition \ref{prop-ring1}. If $\sigma\in\{1-5,1-6\}$, then $\mathrm{sys}(\Sigma)$ contains a further element by Proposition \ref{prop-ring2} and thus a triple since $\mathrm{sys}(\Sigma)$ is then among the cases treated above. This shows that $\mathrm{sys}(\Sigma)$ is the 1-skeleton of a triangular prism, see Proposition \ref{prop-triple}.

Recall that $5-6$ is not a systole of $\Sigma$ since $5-6$ is not changed by our combined twist deformation. It follows that $\mathrm{sys}(\Sigma)$ contains the triple $\{3-4, 4-5, 3-5\}$ or the triple $\{3-4, 4-6, 3-6\}$. This implies that $\mathrm{sys}(\Sigma)$ also contains the triple $\{1-2,2-6,1-6\}$ or the triple $\{1-2, 2-5, 1-5\}$. However, by our combined twist deformation, $1-2$ cannot be a systole of $\Sigma$. This shows that $\Sigma$ and hence $S$ cannot exist and proves the proposition. \end{proof}

\begin{cor} \label{cor-ring3} Let $S\in\mathcal{T}$ be such that $\mathrm{sys}(S)$ contains a long $Y$. Then $\mathrm{sys}(S)$ contains a triple.
\end{cor}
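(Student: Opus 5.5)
We work with the labelling of Proposition \ref{prop-ring3}: the long $Y$ is $\{2-3,3-4,4-5,4-6\}\subset\mathrm{sys}(S)$, and we assume for contradiction that $\mathrm{sys}(S)$ contains no triple. By Proposition \ref{prop-ring3}, $\mathrm{sys}(S)$ strictly contains the long $Y$, so there is a further systole $\sigma$. The plan is to go through the possible $\sigma$ and show that each choice produces a configuration that an earlier result rules out or that forces a triple.

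By Corollary \ref{cor-star}, $\sigma$ cannot pass through $4$. If $\sigma$ passed through $4$, then $4$ would be the centre of a 4-star of systoles, so $S$ would be the Bolza surface, whose systoles contain triples (Proposition \ref{prop-triple}). Hence $\sigma$ is one of
\[
1-2,\ 1-3,\ 1-5,\ 1-6,\ 2-5,\ 2-6,\ 3-5,\ 3-6,\ 5-6,
\]
together with $2-4$ and $1-4$ excluded as above.
\begin{enumerate}
\item $\sigma\in\{3-5,3-6\}$, or $\sigma=5-6$: this gives the 3-rings $\{3-4,4-5,3-5\}$, $\{3-4,4-6,3-6\}$ or $\{4-5,4-6,5-6\}$. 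By Lemma \ref{lem-triple} each of these is a triple.
\item $\sigma\in\{1-2,1-3\}$: then $\{2-3,\sigma\}$ and $\{4-5,4-6\}$ are disjoint pairs, in the same way as in part d) of the proof of Proposition \ref{prop-ring3}.
\item $\sigma\in\{2-5,2-6\}$: this gives the 4-ring $\{2-3,3-4,4-5,2-5\}$ (or its $6$-analogue). That 4-ring does not fill, since otherwise $S$ is the Bolza surface by Proposition \ref{prop-4ring}. Corollary \ref{cor-pair2} then yields either a triple or exactly 6 elements.
\item $\sigma\in\{1-5,1-6\}$: this gives a 5-chain together with the extra leg $4-6$ (or $4-5$).
\end{enumerate}

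In the pair and ring cases we apply Proposition \ref{prop-pair}. Without a triple, $\mathrm{sys}(S)$ must be a 5-chain, a 6-ring, or a 6-element set containing a non-filling 4-ring. None of these has a vertex of degree $3$, except the last one, which is exactly the set excluded in Proposition \ref{prop-ring1}. But our graph has vertex $4$ of degree $3$, so each outcome is a contradiction. Here we have to check that the 6-element possibility really is $\{1-2,2-3,3-4,4-5,2-5,4-6\}$ up to relabelling, using the constraint that no vertex has degree $4$.

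For the case $\sigma\in\{1-5,1-6\}$ we also recall that $\mathrm{sys}(S)$ fills and so has a connected graph. A set of the shape $\{1-2,2-3,3-4,4-5,3-6\}$ is excluded by Proposition \ref{prop-ring2}, so any such set would need yet another systole. We then iterate: adding one more edge either creates a 3-ring, hence a triple by Lemma \ref{lem-triple}, or a disjoint pair or a non-filling 4-ring, which are handled as above. Since there are only six vertices and each has degree at most $3$, this case analysis terminates.

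The main obstacle is doing this bookkeeping exhaustively. We must make sure that every graph on the six Weierstrass points that contains a degree-3 vertex and no 3-ring reduces to one of the excluded configurations: Propositions \ref{prop-ring1}, \ref{prop-ring2}, \ref{prop-ring3}, the disjoint-pair trichotomy, or Corollary \ref{cor-pair1}.
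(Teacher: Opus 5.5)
Your route is the paper's. The paper proves this corollary by pointing to part d) of the proof of Proposition \ref{prop-ring3}, which is exactly a case analysis on an extra systole $\sigma$, and Proposition \ref{prop-ring3} is what guarantees $\sigma$ exists. Adding $\sigma=5-6$ and the Bolza case is appropriate in your static setting. The gap is in the step you flag yourself: identifying outcome (3) of Proposition \ref{prop-pair} with the set of Proposition \ref{prop-ring1} ``using the constraint that no vertex has degree 4''. That fails.

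Take $\sigma=1-3$ and a sixth systole $2-5$. The set $\{1-3,2-3,3-4,4-5,4-6,2-5\}$ has these properties:
\begin{itemize}
\item it has no 3-ring and no vertex of degree 4;
\item it contains the disjoint pairs $\{1-3,2-3\}$, $\{4-5,4-6\}$;
\item it contains the 4-ring $\{2-3,3-4,4-5,2-5\}$, which is non-filling by Proposition \ref{prop-4ring}.
\end{itemize}
So it satisfies conclusion (3) of Proposition \ref{prop-pair} as stated. But its degree-3 vertices $3,4$ are adjacent, whereas in $\{1-2,2-3,3-4,4-5,2-5,4-6\}$ the degree-3 vertices $2,4$ are opposite on the ring. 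Hence Proposition \ref{prop-ring1} does not apply. The same happens with sixth systole $2-6$, $1-5$ or $1-6$, and again inside your iteration for $\sigma\in\{1-5,1-6\}$. For $\sigma=1-2$ your degree count does work, since the only admissible sixth systoles are $2-5$ and $2-6$.

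The ring case leaks too. The set $\{2-3,3-4,4-5,4-6,2-5\}$ contains no disjoint pair. The statement of Corollary \ref{cor-pair2} (``triple or exactly 6 elements'') is satisfied by $\{2-3,2-5,2-6,3-4,4-5,4-6\}$, a $K_{2,3}$ with no disjoint pair at all. So Proposition \ref{prop-pair} cannot even be invoked there.

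What closes these cases is information in the proofs, not the statements. The paper's terse ``by Proposition \ref{prop-pair} and Proposition \ref{prop-ring1}'' relies on it implicitly.
\begin{itemize}
\item Part 1 of the proof of Proposition \ref{prop-pair} shows that, absent a triple, no systole joins the two intersection points of the disjoint pairs. This disposes of $\sigma=1-3$ at once, since $3-4$ joins $3$ and $4$.
\item Part 3 of that proof pins the 6-element outcome down to exactly the set of Proposition \ref{prop-ring1}, up to relabelling, with the pair intersection points opposite on the 4-ring.
\item The proof of Corollary \ref{cor-pair2} supplies the disjoint pairs in the ring case. It takes the systole crossing the boundary of the $(1,2)$-subsurface together with its image under the involution $\psi$, and that is what excludes configurations like the $K_{2,3}$ above.
\end{itemize}
With these refined forms your bookkeeping closes. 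With only the stated results plus degree counting, it does not.
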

\begin{proof}This was shown in part d) of the proof of Proposition \ref{prop-ring3}. 
\end{proof}

\begin{thm} \label{thm37}
Let $S\in\mathcal{T}$ be such that $\mathrm{sys}(S)$ fills. Then either $S$ is the Bolza surface or the systolic graph is the 1-skeleton of a triangular prism or the systolic graph is a 5-ring or a 6-ring or  a 4-chain. In all cases, the systolic graph contains a 4-chain.
\end{thm}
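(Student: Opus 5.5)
We combine the classification results proved earlier in this section by a case analysis on the structure of $\mathrm{sys}(S)$, using that its systolic graph is connected whenever $\mathrm{sys}(S)$ fills. By Proposition \ref{prop-minimal}, $\mathrm{sys}(S)$ contains a minimal filling subset $\Sigma$, which is a 4-star, a 4-ring, a 4-chain or a long $Y$. We treat these four possibilities in turn, together with the question of whether $\mathrm{sys}(S)$ contains a triple.

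\emph{Case 1: $\mathrm{sys}(S)$ contains a triple.} Proposition \ref{prop-triple} applies directly. Either $S$ is the Bolza surface, with the octahedral graph, or the systolic graph is the 1-skeleton of a triangular prism. Both graphs contain a path on four edges visiting five distinct vertices. For the prism with triangles $\{1,2,3\}$ and $\{4,5,6\}$ and rungs $1-4,2-5,3-6$, the path $3-2-1-4-5$ works. After checking against the definition of a 4-chain that the intersection pattern is right (consecutive edges share a vertex, non-consecutive edges share none), this gives a 4-chain.

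\emph{Case 2: no triple.} We go through the types of $\Sigma$. If $\Sigma$ is a 4-star, Corollary \ref{cor-star} forces the Bolza surface. If $\Sigma$ is a 4-ring, it fills by minimality, and Proposition \ref{prop-4ring} again gives the Bolza surface. If $\Sigma$ is a long $Y$, Corollary \ref{cor-ring3} produces a triple, which contradicts the case assumption. So in Case 2 we may assume $\Sigma$ is a 4-chain, say $\{1-2,2-3,3-4,4-5\}$. If $\mathrm{sys}(S)=\Sigma$ we are done. Otherwise there is an extra systole $e$, and we enumerate the possibilities for $e$:
\begin{itemize}
\item An edge $1-3$, $2-4$ or $3-5$ creates a 3-ring, hence a triple by Lemma \ref{lem-triple}, which is excluded.
\item $e=1-5$ gives a 5-ring. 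Corollary \ref{cor-pair1} then says $\mathrm{sys}(S)$ is exactly this 5-ring, since the triple alternative is excluded.
\item $e=1-4$ or $e=2-5$ gives a 4-ring that does not fill. Corollary \ref{cor-pair2} reduces this to six elements, and Proposition \ref{prop-pair} applies.
\item An edge to vertex $6$ gives one of three configurations: disjoint pairs, handled by Proposition \ref{prop-pair}; the star-shaped configuration $\{1-2,2-3,3-4,4-5,3-6\}$, excluded by Proposition \ref{prop-ring2}; or a long $Y$, which yields a triple by Corollary \ref{cor-ring3}.
\end{itemize}

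Proposition \ref{prop-pair} leaves three outcomes: a 5-chain, a 6-ring, or six elements containing a non-filling 4-ring. The 6-ring is one of the allowed conclusions. The remaining two outcomes must be pinned down or excluded.

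\emph{The six-element outcome.} The six-element graph containing a non-filling 4-ring is, up to relabelling, exactly the configuration of Proposition \ref{prop-ring1}, and is therefore excluded. I expect the main bookkeeping obstacle to be checking that every such six-element graph is equivalent to $\{1-2,2-3,3-4,4-5,2-5,4-6\}$ under relabelling and the hyperelliptic symmetry. A similar check is needed for the configuration of Proposition \ref{prop-ring2}.

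\emph{The 5-chain outcome.} A 5-chain is a set of five systoles that is not among the listed types, so it must be excluded. Take the 5-chain $1-2,2-3,3-4,4-5,5-6$. It contains the long $Y$? No: it contains no 3-star. I would instead argue as follows. The 5-chain is a 4-chain plus an edge to $6$ at an end, giving disjoint pairs. I would then use the twist argument of Proposition \ref{prop-ring3}, part d): deform while keeping the lengths fixed until a new systole appears. This should reach either the 6-ring or the prism, and at a maximal-systole point neither can happen. This exclusion is the step I am least sure the earlier results cover directly.

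\emph{Conclusion.} Every surviving graph, namely Bolza, the prism, the 5-ring, the 6-ring and the 4-chain, visibly contains a 4-chain, which proves the last assertion of Theorem \ref{thm37}.
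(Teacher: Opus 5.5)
Your argument breaks down at the step you flagged: excluding the 5-chain.

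\textbf{Why the twist argument fails.} It produces no contradiction. Suppose you deform a surface whose systoles form a 5-chain, keeping the five lengths equal, until a new systole appears. Landing on a 6-ring point or a prism point is perfectly consistent. Both graphs contain 5-chains (the prism contains $3-2,2-1,1-4,4-5,5-6$). Nothing forbids $M_6$ or $M_9$ from lying in the closure of a 5-chain stratum, and the clause ``at a maximal-systole point neither can happen'' has no content. In Proposition~\ref{prop-ring3} the deformation argument works only because the terminal prism would be forced to contain $1-2$ or $5-6$, which the deformation provably keeps non-systolic. Nothing analogous is available here.

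\textbf{The step cannot be closed from the earlier results.} Proposition~\ref{prop-pair} lists the 5-chain as one of its three outcomes, and Proposition~\ref{prop-ring1} removes only outcome (3). The paper's own proof dismisses this case in one sentence citing exactly these two propositions, but the exclusion does not follow from them. Worse, Section~\ref{spinecomputation} reports strata labelled by 5-chains: the face $M_{12}M_9^4M_6^4$ and the edge $M_9^4M_6^4$ lie in $\mathrm{str}(\{c_1,c_4,c_7,c_{10},c_{12}\})$. So your doubt is well founded. What the cited results actually establish is the list with 5-chains added. The final assertion still survives: every filling systolic graph contains a 4-chain, since a 5-chain contains one.

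\textbf{Comparison with the paper's route.} Apart from this, your case analysis is sound but more roundabout. The paper splits on whether some Weierstrass point lies on three systoles. If so, filling forces a long $Y$ (or a triple), Corollary~\ref{cor-ring3} gives a triple, and Proposition~\ref{prop-triple} gives the prism or the Bolza surface. If not, the systolic graph is a path or a cycle. It is then a 4-ring (excluded by Proposition~\ref{prop-4ring}), a 5-ring, a 6-ring, a 4-chain or a 5-chain.

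The same observation shortens your enumeration:
\begin{enumerate}
\item The extensions $1-4$, $2-5$, $2-6$, $3-6$, $4-6$ of your 4-chain each create a 3-star, hence a long $Y$. Corollary~\ref{cor-ring3} disposes of them directly. Your claim that $1-4$ yields a \emph{non-filling} 4-ring was unjustified anyway, since Lemma~\ref{lem-4ring} allows both alternatives.
\item The six-element outcome of Proposition~\ref{prop-pair} has two vertices of degree three, so it also contains a long $Y$. This makes the relabelling check against Proposition~\ref{prop-ring1} and the appeal to Proposition~\ref{prop-ring2} unnecessary.
\end{enumerate}
The only extensions that genuinely remain are $1-5$, which gives the 5-ring via Corollary~\ref{cor-pair1}, and $1-6$, $5-6$, which lead to the 5-chain or 6-ring alternative of Proposition~\ref{prop-pair}.
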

\begin{proof} Assume that the systolic graph contains a 3-star. Then it contains a long $Y$ since the systoles fill. It follows from Corollary \ref{cor-ring3} that the systolic graph contains a triple. It must therefore be the 1-skeleton of a triangular prism or $S$ is the Bolza surface, see Proposition \ref{prop-triple}.

We thus may assume that there are at most two systoles passing through each Weierstrass point. Consequently, the systolic graph is a 4-ring (which fills) or a 5-ring or a 6-ring or a 4-chain or a 5-chain. In the latter case the systolic graph contains a disjoint pair. It then follows from Proposition \ref{prop-pair} and Proposition \ref{prop-ring1} that the systolic graph cannot be a 5-chain. By Proposition \ref{prop-4ring}, it is impossible that the systolic graph is a 4-ring (which fills). This proves the theorem. 
\end{proof}

\begin{thm} \label{thm-irm} Let $C$ be a 4-star, a 4-chain, a 4-ring which fills, or a long $Y$. Then:
		\begin{enumerate}
			\item{ $E(C)$ contains, up to isometry, a unique element where the length of the geodesics of $C$ is minimal.}
			
			\item{If $C$ is a 4-star or a 4-ring that fills, the minimal element of part (1) is the Bolza surface. If $C$ is a 4-chain, then the minimal element of part (1) is $M_{6}$.}
			
			\item{Let $C$ be a long $Y$ and let $M_{Y}$ be the corresponding minimal element of part (1). Then the elements of $C$ are not the systoles of $M_{Y}$.}
		\end{enumerate}
\end{thm}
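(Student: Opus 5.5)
For part (1), I would work with the length function $L(A,C)$ and the set $E(C)$. Since $C$ fills, each $L(A,C)$ with positive weights has a unique minimum in $\mathcal{T}$ (Wolpert's convexity plus \cite{SchmutzMorse}). The plan is to show that the minimum of the common length $\ell$ on $E(C)$ is attained at one of these minima, namely the minimum of $L(A,C)$ for a suitable weight vector $A$.

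Because the curves of $C$ fill, $\ell$ is proper on $E(C)$, so a minimizer $x_0$ exists. At $x_0$ the gradients $\nabla L(c_i)$ have to be positively dependent. If they were not, some tangent direction would strictly decrease every $L(c_i)$ simultaneously. From such a direction one could produce a nearby point of $E(C)$ with smaller common length, using the fact that the lengths of a filling set give local coordinates on the relevant stratum. I would prove this transversality-type step with Kerckhoff/Wolpert gradient arguments, or simply cite the characterization of $\mathrm{Min}(C)$ in \cite{SchmutzMorse}.

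So $x_0\in\mathrm{Min}(C)$ and $x_0$ minimizes $L(A,C)$ for some $A$. Uniqueness up to isometry then follows from strict convexity of the $\max$ of the $L(c_i)$ along Weil--Petersson geodesics. If there were two minimizers of $\ell$ on $E(C)$, the WP geodesic between them would have $\max_i L(c_i)$ strictly below the common value at its midpoint. Minimizing the max over the stratum would then give a contradiction, once one shows that the minimum of $\max_i L(c_i)$ over all of $\mathcal{T}$ is attained on $E(C)$. This holds because a minimizer of the max with unequal lengths could be perturbed, again by positive independence, to lower the largest lengths.

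For part (2), I would use the symmetry of $C$ together with uniqueness. The automorphisms of the systolic graph that preserve the embedding are realized by mapping classes preserving $C$. These act on $E(C)$ and fix the unique minimizer, so the minimizer has at least this symmetry group.

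For a 4-star, by Corollary \ref{cor-star} a 4-star all of whose elements are systoles forces the Bolza surface. More directly, symmetry makes the four angles at the common point equal to $\pi/4$, and Proposition \ref{prop-star} then identifies the Bolza surface. For a filling 4-ring, I would repeat the octagon argument from Proposition \ref{prop-4ring}. Symmetry forces the two octagons to be regular, which gives the Bolza configuration.

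For a 4-chain, I would show that the minimizer of the equal-length problem has the chain meeting at right angles. The first-order condition $\sum a_i\nabla L(c_i)=0$ combined with Wolpert's cosine formula gives that the twist derivatives vanish, and the symmetry gives equal weights. One then checks that the right-angled configuration is exactly $M_6$: completing the chain by the two geodesics closing the 6-ring gives four right-angled hexagons with $\cosh x=2$, as in Proposition \ref{prop-65}. The main obstacle I expect is this identification. One needs to see that the minimality conditions for only four curves already force all six hexagon sides equal, presumably via the involutions of Lemma \ref{lem-pair}.

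For part (3), no direct construction is needed. Suppose the elements of $C$ were the systoles of $M_Y$. Then $\mathrm{sys}(M_Y)$ would contain a long $Y$, so by Corollary \ref{cor-ring3} it contains a triple. By Proposition \ref{prop-triple} the systoles then form the prism or the octahedron configuration, whose systoles all have equal length, at least that of $M_9$. The comparison I would aim for is to exhibit a point of $E(C)$ with strictly smaller common length than the systole of $M_9$, contradicting minimality of $M_Y$. Natural candidates are a deformation of $M_5$ or $M_6$ restricted to a long $Y$ with systole length governed by $\cosh x=2<(5+\sqrt{17})/4$. I would have to check that such a configuration actually contains a long $Y$ of equal-length curves.
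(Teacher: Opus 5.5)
\textbf{Part (1).} Your route differs from the paper's. The paper cites \cite[Corollary~3.10]{MS} and gets $E(C)\neq\emptyset$ from the Bolza surface, whose systoles contain all four configurations. Your min--max argument is sound and self-contained: $\max_i L(c_i)$ is strictly convex along Weil--Petersson geodesics, and its minimizer over $\mathcal{T}$ lies in $E(C)$ because $C$ is minimal filling. It also gives $E(C)\neq\emptyset$, which you otherwise assume. Two things to tighten:
\begin{itemize}
\item State why a proper subset $C'\subsetneq C$ has no vanishing nontrivial nonnegative combination of gradients. Such a combination would give a critical point, hence by convexity a minimum, of a length function of a non-filling set, contradicting \cite{SchmutzMorse}.
\item Drop the first ``perturb back into $E(C)$'' step. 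Four lengths are not local coordinates, and positive independence alone does not let you stay in $E(C)$.
\end{itemize}

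\textbf{Part (2).} There is a genuine gap for the 4-star and the filling 4-ring. Corollary~\ref{cor-star}, Proposition~\ref{prop-star} and the octagon estimate in Proposition~\ref{prop-4ring} all assume the curves are \emph{systoles}. At the minimizer of $E(C)$ you only know they have equal length. Symmetry alone closes this. The stabilizer of $C$ contains an order-8 mapping class (the rotation of the Bolza surface about a Weierstrass point), which must fix the unique minimizer. The Bolza surface is the only genus-2 surface with an automorphism of order 8.

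For the 4-chain $a_1,\dots,a_4$, your twist argument does force right angles; positivity of the weights suffices. But ``symmetry gives equal weights'' is false: symmetry only gives $w_1=w_4$ and $w_2=w_3$, and at $M_6$ the weights are proportional to $(1,2,2,1)$. The identification with $M_6$, which you leave open, is a short computation:
\begin{itemize}
\item Twisting along the curve $a_5$ that closes the chain to a 6-ring also gives $a_4\perp a_5$.
\item Hence $a_2$ and $a_4$ are unions of seams of the pants decomposition $\{a_1,a_3,a_5\}$.
\item Take the right-angled hexagon with alternate sides $x,x,y$ (half-lengths of $a_1,a_3,a_5$). Requiring both relevant seams to have length $x$ gives $\cosh x(\cosh x-2)(\cosh x+1)=0$.
\item So $\cosh x=2$, then $\cosh y=2$, which is $M_6$.
\end{itemize}

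\textbf{Part (3).} The intended claim (compare the introduction) is $\mathrm{sys}(M_Y)\neq C$, and you already have it after your second sentence. Corollary~\ref{cor-ring3} and Proposition~\ref{prop-triple} give $9$ or $12$ systoles rather than $4$; alternatively apply Proposition~\ref{prop-ring3} directly. This is essentially the paper's argument.

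The length comparison you then propose is unsupported. $M_5$ and $M_6$ have ring systolic graphs with no 3-star, so no long $Y$ in them consists of systoles, and its common length is not governed by $\cosh x=2$. You also give no reason why the minimal common length of a long $Y$ should lie below that of $M_9$. If you meant the stronger statement $C\not\subseteq\mathrm{sys}(M_Y)$, that step is a real gap.
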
 
	\begin{proof}
		1) By Corollary 3.10 of \cite{MS}, in order to prove part  (1), we have only to show that $E(C)$ is non-empty since, by Proposition \ref{prop-minimal}, $C$ is a minimal set that fills. Note that if $S$ is the Bolza surface, then $\mathrm{sys}(S)$ contains a 4-star, a 4-chain, a 4-ring (which fills) as well as a long $Y$, so that part (1) holds.
		
		2) Part (2) follows from Corollary \ref{cor-triple},  Corollary \ref{cor-star} and from Proposition \ref{prop-65}. 
		
		3) Part (3) is a consequence of Corollary \ref{cor-ring3}, of Proposition \ref{prop-triple}, and of the fact that $M_{9}$ is a critical point of $syst$. 
	\end{proof}

\begin{thm}
\label{newtheorem}
The only minimal filling sets realised as sets of systoles are 4-chains.
\end{thm}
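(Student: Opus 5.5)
The statement asks which minimal filling sets can occur as the full set $\mathrm{sys}(S)$. By Proposition \ref{prop-minimal}, a minimal filling set of systoles is a 4-star, a filling 4-ring, a 4-chain, or a long $Y$. So the plan has two parts. First, exclude the first three as exact systole sets. Second, show that 4-chains genuinely occur.

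For the exclusions I invoke earlier results one case at a time.
\begin{enumerate}
\item \emph{4-star.} If $\mathrm{sys}(S)$ contained a 4-star, Corollary \ref{cor-star} forces $S$ to be the Bolza surface. By Proposition \ref{prop-triple}, $\mathrm{sys}(S)$ then has 12 elements, so it is not the 4-star itself.
\item \emph{Filling 4-ring.} Proposition \ref{prop-4ring} says that if $\mathrm{sys}(S)$ contains a filling 4-ring, then $S$ is Bolza with 12 systoles. Again the 4-ring cannot be the whole systole set.
\item \emph{Long $Y$.} This is exactly Proposition \ref{prop-ring3}: no $S$ has $\mathrm{sys}(S)$ equal to a long $Y$.
\end{enumerate}
Theorem \ref{thm37} is consistent with this: the only filling systolic graphs with four edges are 4-chains.

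For existence I would build a surface with $\mathrm{sys}(S)$ a 4-chain by perturbing $M_6$, whose systoles form a 6-ring $\{a_1,\dots,a_6\}$ meeting at right angles with $\cosh(x)=2$. A 4-chain inside this ring, say $C=\{a_1,a_2,a_3,a_4\}$, fills by Lemma \ref{lem-4ring}. Since the $a_i$ fill, their length functions have no common descent direction at a minimum. The plan is to find a tangent direction $v$ at $M_6$ that keeps the lengths of $a_1,\dots,a_4$ equal to first order while strictly increasing the lengths of $a_5$ and $a_6$. By local finiteness of systoles, all other geodesics stay strictly longer nearby.

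Concretely, I take the Fenchel–Nielsen-type coordinates adapted to the ring; the gradients of the six length functions span a space of dimension at most 6. I then use a small deformation of $M_6$ along a curve where $L(a_1)=\dots=L(a_4)$. Theorem \ref{thm-irm}(2) identifies $M_6$ as the minimum on $E(C)$, so $E(C)$ is a submanifold of dimension $6-3=3$ near $M_6$ (assuming transversality of the gradients). Along $E(C)$, I need $L(a_5)-L(a_1)$ and $L(a_6)-L(a_1)$ to be positive in some direction. I would exhibit this with explicit twist deformations. For example, a twist along the separating geodesic splitting the ring symmetrically changes $a_5,a_6$ but preserves lengths in $C$ to first order, combined with a length change compensating second-order effects.

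The main obstacle is this positivity. At $M_6$, the point is a critical point of index 3 for $syst$, so there are descent directions, but I need an ascent direction for the two dropped curves within $E(C)$. I would first try an argument by contradiction. Suppose $L(a_5)$ and $L(a_6)$ cannot both exceed $L(C)$ near $M_6$ on $E(C)$. Then $M_6$ would be a local extremum of a suitable positive combination restricted to $E(C)$. Using that the gradients of the six lengths satisfy a unique positive linear relation at $M_6$, with all coefficients equal by symmetry, this would contradict the independence of $\nabla L(a_5)$ and $\nabla L(a_6)$ from the span of the others. Failing that, I would fall back on the numerical verification in Section \ref{spinecomputation}: find a point in a 3-cell whose systole set is exactly a 4-chain, as asserted in Theorem \ref{mainthm}.
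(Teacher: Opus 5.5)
Your exclusion half is exactly the paper's proof. The paper deduces the theorem in one line from Proposition \ref{prop-minimal}, Corollary \ref{cor-star}, Proposition \ref{prop-4ring} and Proposition \ref{prop-ring3}, used just as you use them. The paper reads the theorem only as this restriction. That a 4-chain really occurs as $\mathrm{sys}(S)$ is not proved there; it rests on the numerical computations of Section \ref{spinecomputation}, e.g.\ the 3-cell in $\mathrm{str}(\{c_1,c_4,c_{10},c_{12}\})$.

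Your added existence argument has a genuine flaw.

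\textbf{The gradients at $M_6$ span at most 3 dimensions.} At $M_6$ all intersections between systoles are orthogonal. By Wolpert's cosine formula, each twist field $t_{a_i}$ therefore annihilates every $dL(a_j)$, so it lies in $K=V^{\perp}$, where $V$ is the span of the six gradients. By twist--length duality, $\nabla L(a_i)$ is a constant multiple of $Jt_{a_i}$. Hence $\dim V=\dim\mathrm{span}\{t_{a_i}\}\le\dim V^{\perp}$, so $\dim V\le 3$. This matches index $3$ and the 3-dimensional unstable manifold described in Section \ref{spinecomputation}.

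\textbf{Your contradiction argument fails.} The linear relations among the six gradients form a space of dimension at least $3$, so there is no unique positive relation. Under your own transversality assumption, $\nabla L(a_1),\dots,\nabla L(a_4)$ already span $V$. So $\nabla L(a_5)$ and $\nabla L(a_6)$ lie in their span, and no contradiction can come from their ``independence''.

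\textbf{Your first-order plan also fails.} If $E(C)$ is a 3-manifold at $M_6$, its tangent space contains $K$, and $\dim K\ge 3$, so the tangent space equals $K$. On $K$ all six length differentials vanish. So no first-order choice of direction inside $E(C)$ can make $a_5,a_6$ longer than $a_1,\dots,a_4$. Whether they separate is a second-order question. This is consistent with the paper's remark that higher-order derivatives were needed where the differentials are linearly dependent.

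So your existence part currently rests only on your numerical fallback. There you should cite the computation of Section \ref{spinecomputation} itself rather than Theorem \ref{mainthm}, since that theorem is deduced from the very statement you are proving.
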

\begin{proof}
This is a consequence of Proposition \ref{prop-minimal}, Corollary \ref{cor-star} and Propositions \ref{prop-4ring} and \ref{prop-ring3}.
\end{proof}

\begin{thm}
\label{thm-crit}
 Up to isometry, the function {\em syst} on moduli space has exactly four different critical points given by $M_{5}$, $M_{6}$, $M_{9}$ and the Bolza surface.
\end{thm}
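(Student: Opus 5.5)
We will combine the classification of filling systole sets in Theorem \ref{thm37} with the standard characterisation of critical points of $syst$ from \cite{SchmutzMorse}. A point $x$ is critical exactly when the systoles fill. In that case the gradients of the length functions of the systoles must satisfy the condition that $0$ lies in the interior of their convex hull in $T_x\mathcal{T}$ (``eutaxy''). Equivalently, there is no tangent direction along which all systole lengths increase to first order, and the set of systoles is \emph{perfect} relative to the index.

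First, we restrict to points where $\mathrm{sys}(S)$ fills. If it does not fill, the complement of the systoles contains a non-disc component, hence an essential curve disjoint from all systoles. A Fenchel--Nielsen deformation (twist, or stretching a pants decomposition containing that curve) then increases all systole lengths at first order, so $S$ is regular. This uses the observation from \cite{Thurston}, \cite{MS} that critical points lie in $\mathcal{P}$.

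By Theorem \ref{thm37}, a filling $\mathrm{sys}(S)$ falls into one of five cases:
\begin{enumerate}
\item $S$ is the Bolza surface (12 systoles);
\item the systolic graph is a triangular prism (9 systoles);
\item a 6-ring;
\item a 5-ring;
\item a 4-chain.
\end{enumerate}

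We go through these cases in turn. The Bolza surface is critical; in fact it is the global maximum. In the prism case, the proof of Proposition \ref{prop-triple} exhibits a one-parameter family parametrised by $x$, with $M_9$ as an endpoint. Along the family, moving away from $M_9$ increases $x$ and hence all nine systole lengths. So only $M_9$ can be critical, and it is critical by Theorem 44 of \cite{SchmutzMorse}.

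For a 6-ring, the four hexagons cut out by the systoles must be equiangular and right-angled at a critical point. Otherwise we can perturb the hexagon angles toward more balanced ones while increasing all six lengths. Proposition \ref{prop-65}(1) identifies the resulting minimiser with $M_6$ and gives its uniqueness. The 5-ring case is analogous: at a critical point all intersection angles must be equal, which forces $S=M_5$ by Proposition \ref{prop-65}(2).

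Finally, for a 4-chain we must show that no critical point occurs. A set of only four systole length functions in the 6-dimensional space $T_x\mathcal{T}$ cannot have $0$ in the interior of the convex hull of their gradients unless the gradients are linearly dependent. Here they are independent: a 4-chain fills minimally, so by Corollary 3.10 of \cite{MS} the map $\mathcal{T}\to\mathbb{R}^4$ given by the four lengths is a submersion near such points. Hence there is a direction increasing all four lengths, and no critical point arises. This matches the remark in the introduction that 4-chain strata contain no critical points.

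The main obstacle is making the angle-balancing arguments for the 6-ring and 5-ring rigorous. One must check that an unbalanced configuration admits a deformation increasing every systole length simultaneously, without creating new shorter curves. I would first try to do this through the minimality statements in Proposition \ref{prop-65} combined with convexity of length functions: a non-minimiser of the weighted length function $L(A,C)$ with equal weights is not in $\mathrm{Min}(C)$, so some direction increases all lengths. This reduces criticality to membership in $\mathrm{Min}(C)$ with all systoles of equal length, and uniqueness of minima gives a single point per orbit.
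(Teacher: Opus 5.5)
\textbf{Gap in the 4-chain case.} Your outline (critical $\Rightarrow$ systoles fill $\Rightarrow$ one of the five cases of Theorem \ref{thm37}) matches the paper, but your exclusion of 4-chains does not go through. You assert that because a 4-chain $C=\{c_1,\dots,c_4\}$ fills minimally, its length map $\mathcal{T}\to\mathbb{R}^4$ is a submersion, so the four gradients are independent. Minimal filling cannot give this. Consider the point of $E(C)$ of least common length, which is $M_6$ by Theorem \ref{thm-irm}(2) and lies in the closure of the 4-chain strata. There the four gradients are necessarily dependent: either the differences $\nabla L(c_i)-\nabla L(c_1)$ are already dependent, or $E(C)$ is smooth there and the Lagrange condition puts $\nabla L(c_1)$ in their span. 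So you would need a separate argument that no dependence occurs on the open stratum $\mathrm{str}(C)$, and you give none. The paper invokes Corollary 3.10 of \cite{MS} for existence and uniqueness of that length-minimising point, not for a submersion property.

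Independence is also more than you need. Eutaxy means a relation $\sum_{c} a_c\nabla L(c)=0$ with all $a_c>0$, i.e.\ $0$ in the \emph{relative} interior of the convex hull; the interior in $T_x\mathcal{T}$ would exclude $M_5$, $M_6$, $M_9$, whose systole gradients do not span. The missing idea is the paper's convexity step. Such a relation makes $S$ the unique minimum of $L(A,C)$ on $\mathcal{T}$. Since $L(A,C)=(\sum_c a_c)\,\ell$ on $E(C)$, with $\ell$ the common length, $S$ is the unique length-minimising point of $E(C)$. For a 4-chain this forces $S=M_6$ by Theorem \ref{thm-irm}, but the systoles of $M_6$ form a 6-ring, not a 4-chain.

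\textbf{Other issues.} Your last paragraph reaches for this reduction but misstates it. $\mathrm{Min}(C)$ consists of minima of $L(A,C)$ for \emph{all} positive $A$, so failing to minimise the equal-weight function excludes nothing. With general positive weights, the reduction settles the prism, 6-ring and 5-ring cases via Proposition \ref{prop-triple} and its proof and Proposition \ref{prop-65}, exactly as in the paper. The angle-balancing heuristics are then unnecessary.

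The same reduction repairs your prism argument. Monotonicity of $x$ along the family does not give a first-order increasing direction, as $M_9$ itself shows: the nine lengths increase along the family starting at $M_9$, yet $M_9$ is critical. What works is that $\ell(S)>\ell(M_9)$ with $M_9\in E(C)$, which rules out every other point of the family.

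Finally, critical points are not ``exactly'' the points with filling systoles, since the whole 3-dimensional spine has that property. Only critical $\Rightarrow$ filling holds. Quote it from \cite{Thurston} rather than deriving it from a twist along a curve disjoint from all systoles, because such a twist leaves their lengths unchanged.
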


\begin{proof} By Theorem 44 in \cite{SchmutzMorse} $M_{5}$, $M_{6}$, $M_{9}$ and the Bolza surface are critical points of $syst$. It remains to show that there are no further critical points.

Suppose $S$ is a critical point of $syst$. It is known that $syst(S)$ fills; this is a corollary of a proposition first proven in \cite{Thurston}. Since the systoles fill, $syst(S)$ must be one of the five sets described in Theorem \ref{thm37}. Moreover, $S$ must be such that the systoles in $S$  have the shortest length of all surfaces in $E(syst(S))$, otherwise $S$ is not critical by the convexity properties of the length functions. It then follows from Theorem \ref{thm-irm} that $syst(S)$ cannot be a 4-chain. Concerning the four other cases of Theorem \ref{thm37}, it follows from Proposition \ref{prop-triple} and its proof as well as from Proposition \ref{prop-65} that $S$ must be one of $M_{5}$, $M_{6}$, $M_{9}$ and the Bolza surface.  
\end{proof}

\begin{rem} Theorem \ref{thm-crit} appears in \cite{SchmutzMorse} as Theorem 44 (pg 443f). However, the proof that there are no other critical points is only sketchy and needed to be made rigorous. In particular, the possible existence of sets $syst(S)$ which contain a long $Y$, but not a triple, was not considered in \cite{SchmutzMorse}.
\end{rem}

	\section{Spine Computation}
	\label{spinecomputation}
	This section explains how we computed the  quotient of the Thurston spine by the action of the mapping class group, and shows the resulting 3-dimensional simplicial complex in pictorial form. The code, and instructions on how to use it can be found in the GitHub repository: \url{https://github.com/a-n-n-i/computations}.
	\subsection{Description of the algorithm}
	A key ingredient in this computation is local finiteness. Suppose a critical point $p$ has set of systoles $C$. Recall that there is then a neighbourhood of $p$ on which the systoles are all contained in the set $C$. In most cases, it is enough to use first derivatives of lengths of curves in $C$ to determine the strata adjacent to $p$. In a few cases where differentials of lengths were linearly dependent, it was necessary to use higher order derivatives.
	
	Differentials of lengths were computed with respect to the following parameterisation: A filling subset of $C$ must contain at least four curves. Cutting the surface along a 4-chain yields a 12-gon. It follows from hyperbolic trigonometry that parameterising the surface via this polygon requires three angle parameters and six edge length parameters, \cite{Ni}. Given any six parameters, the remaining three can be determined numerically by the following equation:
	\begin{equation*}
	\label{polygoneq}
	\prod_{i=1}^{12} M_i = I \quad \text{in } \mathrm{PSL}(2,\mathbb{R})
	\end{equation*}
	where $M_i$ is a matrix that represents a translation through a distance equal to the length of edge $i$ in the upper half plane model of the hyperbolic plane, followed by a rotation through the angle at the vertex at the end of edge $i$.
	
	The structure of the systoles around a critical point was studied in Section 5 of \cite{MS}. For the local maximum given by the Bolza surface, the gradients of the lengths of the twelve systoles span the tangent space to $\mathcal{T}$ at the point of $\mathcal{T}$ corresponding to the Bolza surface. However, if $p$ is one of the other three critical points, with set of systoles $C$, $\{\nabla L(c)(p)\ |\ c\in C\}$ does not span $T_{p}\mathcal{T}$, and it was shown in Section 5 of \cite{MS} that the unstable manifold of $p$ has tangent space given by the orthogonal complement of $\{\nabla L(c)(p)\ |\ c\in C\}$, which does not depend on the choice of metric used to define gradients. Moreover, the orthogonal complement of $\{\nabla L(c)(p)\ |\ c\in C\}$ is tangent to $\mathcal{P}$ at $p$. The critical point $M_{6}$ has index 3, and in this example it is not hard to see that there are no strata of $\mathcal{P}_{g}$ below $M_{6}$. The other critical points all have strata of $\mathcal{P}$ below them; these strata are contained in unstable manifolds of critical points with smaller index. Numerically, it is easier to compute the strata below a critical point, as this can mostly be done using first order derivatives.
	
	If $p$ is one of the four critical points, we are essentially calculating an analogue of a Voronoi decomposition $\mathcal{V}(p)$ of the unit tangent space to $\mathcal{T}$ at $p$. Let $C$ be the set of systoles at the critical point. A unit vector $\mathbf{v}$ in $T_{p}\mathcal{T}$ is in a cell of $\mathcal{V}(p)$ labelled by $C'\subset C$ if the inner product of $\mathbf{v}$ with $-\nabla L(c_{i})$ for all $c\in C'$ is equal and is less than the inner product of $\mathbf{v}$ with $-\nabla L(c_{i})$ for all $c_{i}\in C\setminus\{c\}$. Except in cases where higher order derivatives are needed, the cell decomposition $\mathcal{V}(p)$ determines the stratification near $p$. What we found was that this information was enough to piece together the strata making up $\mathcal{P}$. Every stratum is adjacent to at least two critical points.

	\subsection{The simplicial complex}
By a theorem of Lojasiewicz, \cite{Lojasiewicz1964}, there exists a triangulation of $\mathcal{P}$ compatible with the stratification, i.e. the interior of every simplex is contained in a single stratum. Our results are stated in the form of such a triangulation.
	
	Let $M_{12}$ be a marked surface representing the Bolza surface and let $C=\{c_1, c_2, \ldots, c_{12} \}$ be the set of systoles of $M_{12}$, as shown in Figure \ref{Bolza_surface}. Denote by $M_{9}$ the marked surface with systoles $\{c_1, c_2, c_3, c_5, c_6, c_8, c_9, c_{11}, c_{12} \}$, by $M_6$ the marked surface with systoles  $\{c_1, c_2, c_7, c_8, c_9, c_{11}\}$,  and by $M_{5}$ the marked surface with systoles $\{c_1, c_2, c_5, c_7, c_{11} \}$.

From \cite{SchmutzMorse} , we know that for the orientation preserving automorphism groups of critical points, we have:
	\[
	\operatorname{Aut}(M_{12}) \cong \operatorname{GL}(2,3),\quad
	\operatorname{Aut}(M_6) \cong 2D_6,\quad
	\operatorname{Aut}(M_9) \cong D_6,\quad
	\operatorname{Aut}(M_5) \cong \mathbb{Z}_{10}.
	\]
	To explicitly compute the elements of these automorphism groups, we represented the automorphisms as permutations of the systoles, and generated the entire automorphism group by finding some generators. Since the orders of the automorphism groups is known, we were able to verify that the entire group was obtained in this way.
	\begin{figure}[!ht]
		\includegraphics[width=0.4\textwidth]{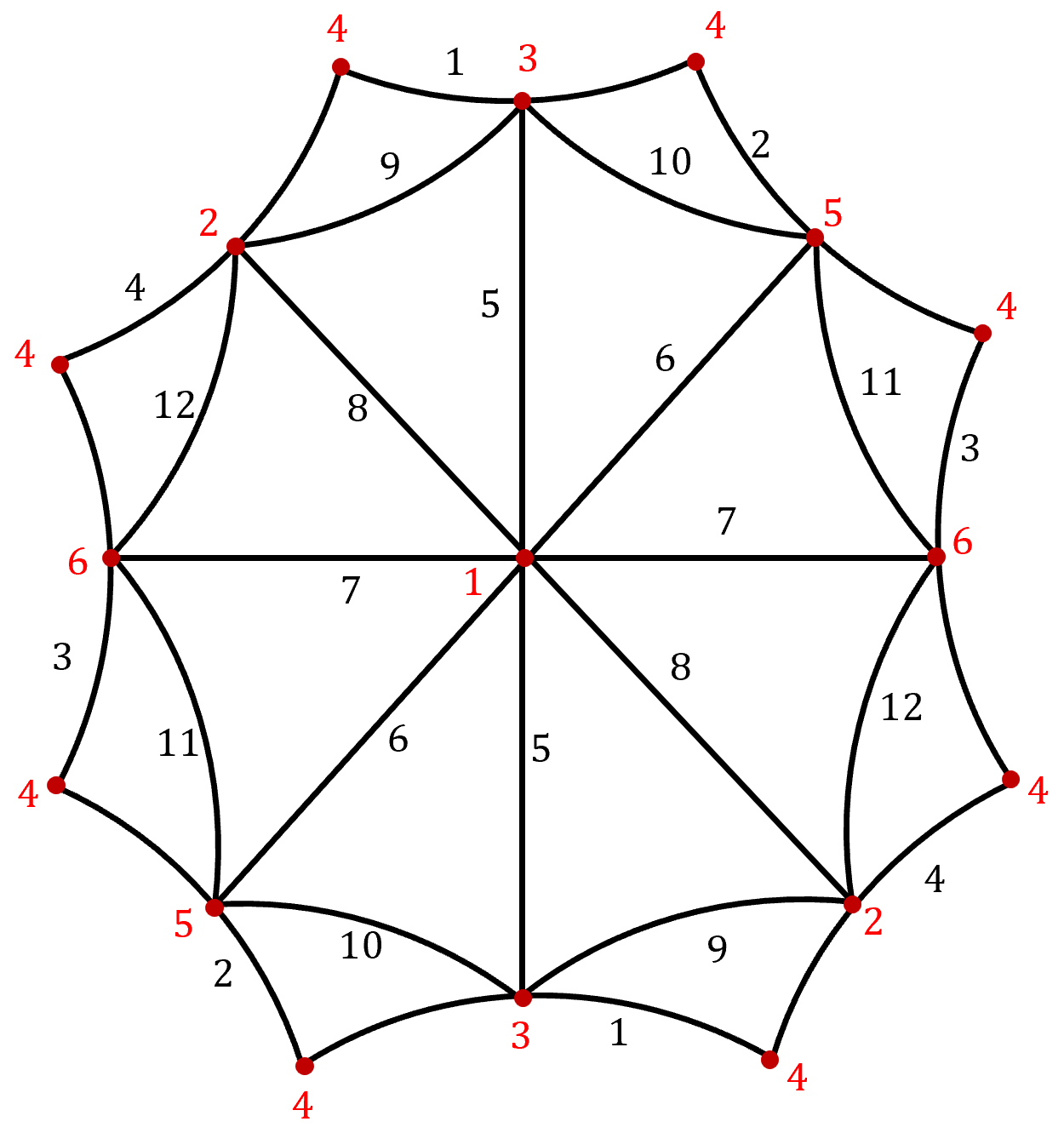}
		\caption{The Bolza surface $M_{12}$. The six red points are fixed points of the hyperelliptic involution, labelled 1 to 6. The curves marked with black numbers are the 12 systoles of $M_{12}$. }
		\label{Bolza_surface}
	\end{figure}
	
	In our triangulation of $\mathcal{P}$, $M_{12}$ is a vertex. Denote by $\mathrm{Aut}^{\pm}(M_{12})$ the group of automorphisms of the Bolza surface $M_{12}$ and by $\mathrm{Aut}(M_{12})$ the group of orientation-preserving automorphisms of $M_{12}$. The group $\mathrm{Aut}^{\pm}(M_{12})$ permutes the simplices with vertex on $M_{12}$. There are many elements of $M_{9}, M_{5}$ and $M_{6}$ under the action of $\mathrm{Aut}^{\pm}(M_{12})$. These have been labelled as shown below, where the sets of numbers correspond to the subsets of $\{c_1, c_2, c_3, c_5, c_6, c_{7},c_8, c_9, c_{10}, c_{11}, c_{12} \}$ that are systoles at the critical point.

	\begin{equation*}
		\begin{aligned}
			&M_6^1: \{1, 2, 7, 8, 9, 11\} 		&M_6^2: \{2, 3, 5, 8, 10, 12\}	\ \ 	&M_6^3:  \{3, 4, 5, 6, 9, 11\} 		&M_6^4:  \{1, 4, 6, 7, 10, 12\}\\
			&M_6^5: \{1, 2, 5, 8, 11, 12\} 		&M_6^6: \{2, 3, 5, 6, 9, 12\} 	\ \ 	&M_6^7: \{3, 4, 6, 7, 9, 10\} 		&M_6^8: \{1, 4, 7, 8, 10, 11\} \\
			&M_6^9: \{1, 4, 5, 6, 11, 12\} 		&M_6^{10}: \{2, 3, 7, 8, 9, 10\} 	\ \ 	&M_6^{11}: \{1, 2, 6, 7, 9, 12\} 		&M_6^{12}: \{3, 4, 5, 8, 10, 11\} \\
			&M_6^{13}: \{1, 3, 6, 8, 10, 12\} 	&M_6^{14}: \{2, 4, 5, 7, 9, 11\} 	\ \ 	&M_6^{15}: \{1, 3, 6, 8, 9, 11\} 		&M_6^{16}: \{2, 4, 5, 7, 10, 12\} 
		\end{aligned}
	\end{equation*}
	
	\begin{equation*}
		\begin{aligned}
			&M_9^1: \{1, 2, 3, 5, 6, 8, 9, 11, 12\} \ \ &M_9^2: \{2, 3, 4, 5, 6, 7, 9, 10, 12\} 	\ \ &M_9^3: \{1, 3, 4, 6, 7, 8, 9, 10, 11\}  \\	
			&M_9^4: \{1, 2, 4, 5, 7, 8, 10, 11, 12\}\ \ &M_9^5: \{1, 3, 4, 5, 6, 8, 10, 11, 12\} 	\ \ &M_9^6: \{1, 2, 3, 6, 7, 8, 9, 10, 12\}  \\
			&M_9^7: \{1, 2, 4, 5, 6, 7, 9, 11, 12\}	     	&M_9^8: \{ 2,3, 4, 5,  7,8, 9, 10, 12\}     \ \ & 
		\end{aligned}
	\end{equation*}
	
	\begin{equation*}
		\begin{aligned}
			&M_5^1: \{1, 2, 5, 7, 11\} \ \ 		&M_5^2: \{2, 3, 6, 8, 12\} \ \ 		&M_5^3: \{3, 4, 5, 7, 9\} \ \ 		&M_5^4: \{1, 4, 6, 8, 10\} \\
			&M_5^5: \{1, 4, 5, 7, 12\} \ \ 		&M_5^6: \{2, 3, 5, 7, 10\} \ \ 		&M_5^7: \{1, 3, 5, 6, 11\} \ \ 		&M_5^8: \{1,2,6,8,9\} \\
			&M_5^9: \{3, 4, 6, 8, 11\} \ \ 		&M_5^{10}: \{1, 3, 6, 7, 10\} \ \ 	&M_5^{11}: \{2, 4, 6, 7, 12\} \ \ 	&M_5^{12}: \{5, 8, 10, 11, 12\} \\
			&M_5^{13}: \{1, 3, 7, 8, 9\} \ \ 		&M_5^{14}: \{2, 4, 5, 8, 10\} \ \ 	&M_5^{15}: \{1, 2, 9, 11, 12\} \ \ 	&M_5^{16}: \{1, 3, 5, 8, 12\} \\
			&M_5^{17}: \{2, 4, 5, 6, 9\} \ \ 		&M_5^{18}: \{2,4,7,8,11\}  \ \ 		&M_5^{19}: \{1, 4, 10, 11, 12\}\ \ 	&M_5^{20}: \{5,6,9,11,12\}\\
			&M_5^{21}: \{7,8,9,10,11\} \ \ 		&M_5^{22}: \{ 6,7,9,10,12 \}\ \ 		&M_5^{23}: \{2,3,9,10,12\}\ \ 		&M_5^{24}: \{3,4,9,10,11 \}
		\end{aligned}
	\end{equation*}

	
	There are three $\text{Aut}^{\pm}(M_{12})$-orbits of strata adjacent to $M_{12}$ whose systoles are 4-chains. These 4-chains determine the systoles in the interiors of six distinct mapping class group orbits of 3-dimensional cells of $\mathcal{P}$. If we consider orbits of the action of the extended mapping class group, i.e. allowing orientation reversing automorphisms, there are only three orbits of 3-dimensional cells. A choice of 3 adjacent cells in these orbits have systoles given by
	$\{c_1,c_4,c_{10},c_{11}\}$, $\{c_1,c_4,c_{10},c_{12}\}$, $\{c_1,c_{10},c_{11},c_{12}\}$. 
	
	Figures \ref{1_4_10_12}, \ref{1_4_10_11} and \ref{1_10_11_12} show the three representatives of the orbits of 3-dimensional cells. In Figure \ref{1_10_11_12},  $M_6'$ is a critical point isomorphic to $M_6$, with systoles $c_1, c_8, c_{10}, c_{11}, c_{12}, c'$, where $c'$ is a simple closed geodesic passing through the Weierstrass points $2$ and $3$. A simplex is specified by the critical points on its boundary. The captions under the figures therefore show the systoles in the stratification on the simplex. These stratifications determine the gluing maps.
	
	\begin{figure}[!ht]
		\includegraphics[width=0.5\textwidth]{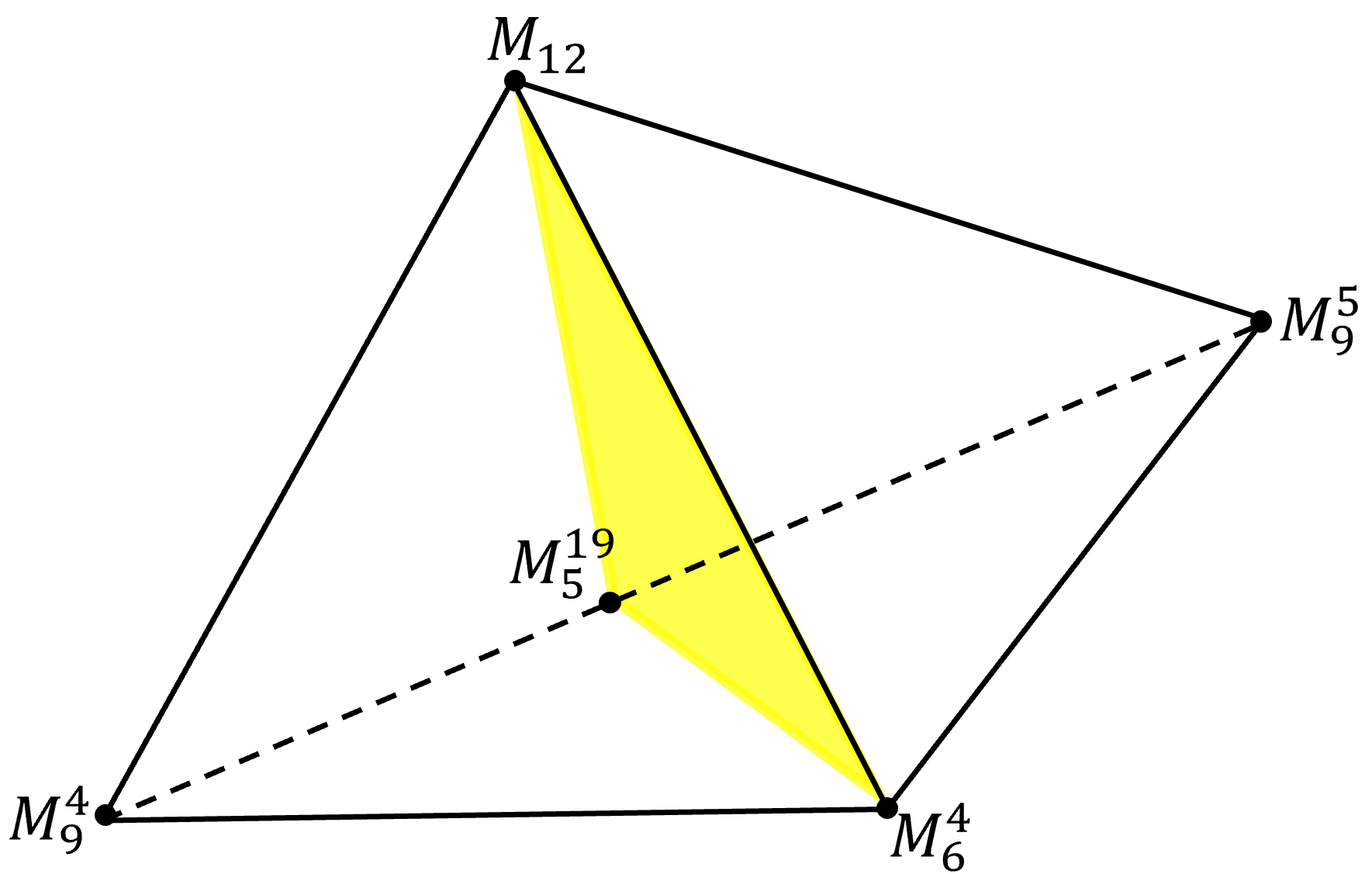}
		\caption{Simplex with vertex $M_{12}$ and interior in the stratum $\mathrm{str}(\{c_1,c_4, c_{10},c_{12}\})$.\\
			$M_{12}M_9^4M_5^{19}M_9^5, M_{9}^4M_5^{19}, M_9^5M_5^{19}, M_{12}M_5^{19}\in \mathrm{str}(\{c_1,c_4,c_{10},c_{11},c_{12}\})$(5-ring),\\
			$M_{12}M_9^4M_6^{4},M_9^4M_6^4 \in \mathrm{str}(\{c_1,c_4,c_7,c_{10},c_{12}\})$(5-chain),\\
			$M_{12}M_9^5M_6^4, M_9^5M_6^4 \in \mathrm{str}(\{c_1, c_4,c_6,c_{10},c_{12}\})$(5-chain),\\
			$M_{12}M_6^{4} \in \mathrm{str}(\{c_1,c_4,c_6,c_7,c_{10},c_{12}\})$(6-ring),\\
			$M_{12}M_9^4 \in \mathrm{str}(\{c_1,c_2,c_4,c_5,c_7,c_8,c_{10},c_{11},c_{12}\})=\mathrm{str}(\mathrm{sys}(M_{9}^{4}))$\\
			$M_{12}M_9^5 \in \mathrm{str}(\{c_1,c_3,c_4,c_5,c_6,c_8,c_{10},c_{11},c_{12}\})=\mathrm{str}(\mathrm{sys}(M_{9}^{5}))$.
		The yellow triangle is the fixed point set of the reflection $r$ defined below, which gives a symmetry of the simplex.}
		\label{1_4_10_12}
	\end{figure}

In Figure~\ref{1_4_10_12}, the yellow face divides this simplex into two simplices. The reflection $r$ defined below fixes the yellow face and is an automorphism of the simplex.
	
		 In Figure \ref{1_10_11_12}, the cell can be subdivided into two simplices. The face $M_{12}M_9^4M_6^{\prime}M_9^5$ is a 5-chain, and the face $M_{12}M_9^4M_5^{19}M_9^5$ is a 5-ring. These two faces have the same tangent cone at $M_{12}$. We guessed the sets of systoles by solving the equations to second order at $M_{12}$, and verified that the two faces have the sets of systoles claimed by solving first order equations at selected points along the edge from $M_{12}$ to $M_{9}^{4}$ and the edge from $M_{12}$ to $M_{9}^{5}$. The two faces $M_{9}^{4}M_{12}M_{9}^{5}M_{5}^{19}$ and $M_{9}^{4}M_{12}M_{9}^{5}M_{6}'$, as well as the  edges connecting different copies of $M_{9}$ were drawn as smooth. The justification for this is as follows: We know from Section 3 of \cite{MS} that any stratum labelled by a 4-chain is an open set contained in an embedded submanifold given by the locus of points at which the lengths of the curves making up the 4-chain are equal. The face $M_{9}^{4}M_{12}M_{9}^{5}M_{5}^{19}$ is contained in the locus of points on which the lengths of curves in a 5-ring are equal. This locus is determined by the three constraints that determine the locus labelled by the 4-chain, as well as one further constraint ensuring that the length of the fifth curve is equal to that of the other curves. This last constraint is linearly independent of the other constraints at $M_{5}^{19}$, and remains linearly independent at all other points we checked. If the stratum containing the interior of $M_{9}^{4}M_{12}M_{9}^{5}M_{5}^{19}$ is a smoothly embedded submanifold, we can also choose the edge $M_{9}^{4}M_{9}^{5}M_{5}^{19}$ contained in this stratum to be smooth. The face $M_{9}^{4}M_{12}M_{9}^{5}M_{6}'$ contained in the locus of points on which the lengths of curves in a 5-chain are equal. Although the four constraints defining this locus are not linearly independent at the critical point $M_{6}'$, this face also appears to be smooth. It is contained in the  smoothly embedded locus of points at which the lengths of curves in a 4-chain are equal, and numerical experimentation suggests it is locally a codimension 1 embedded submanifold of this locus, separating the points at which the four geodesics $\{c_{1}, c_{10}, c_{11}, c_{12}\}$ are shorter than $c_{8}$ from the points at which $\{c_{1}, c_{10}, c_{11}, c_{12}\}$ are longer than $c_{8}$. The face $M_{9}^{4}M_{12}M_{9}^{5}M_{6}'$ has $\mathrm{str}(\{c_1,c_{10}, c_{11},c_{12}\})$ on one side and $\mathrm{str}(\{c_8,c_{10}, c_{11},c_{12}\})$ on the other. The edge $M_{9}^{4}M_{6}'M_{9}^{5}$ is then chosen to be a smooth path contained in the stratum labelled by the 5-chain.

	\begin{figure}[!ht]
		\includegraphics[width=0.4\textwidth]{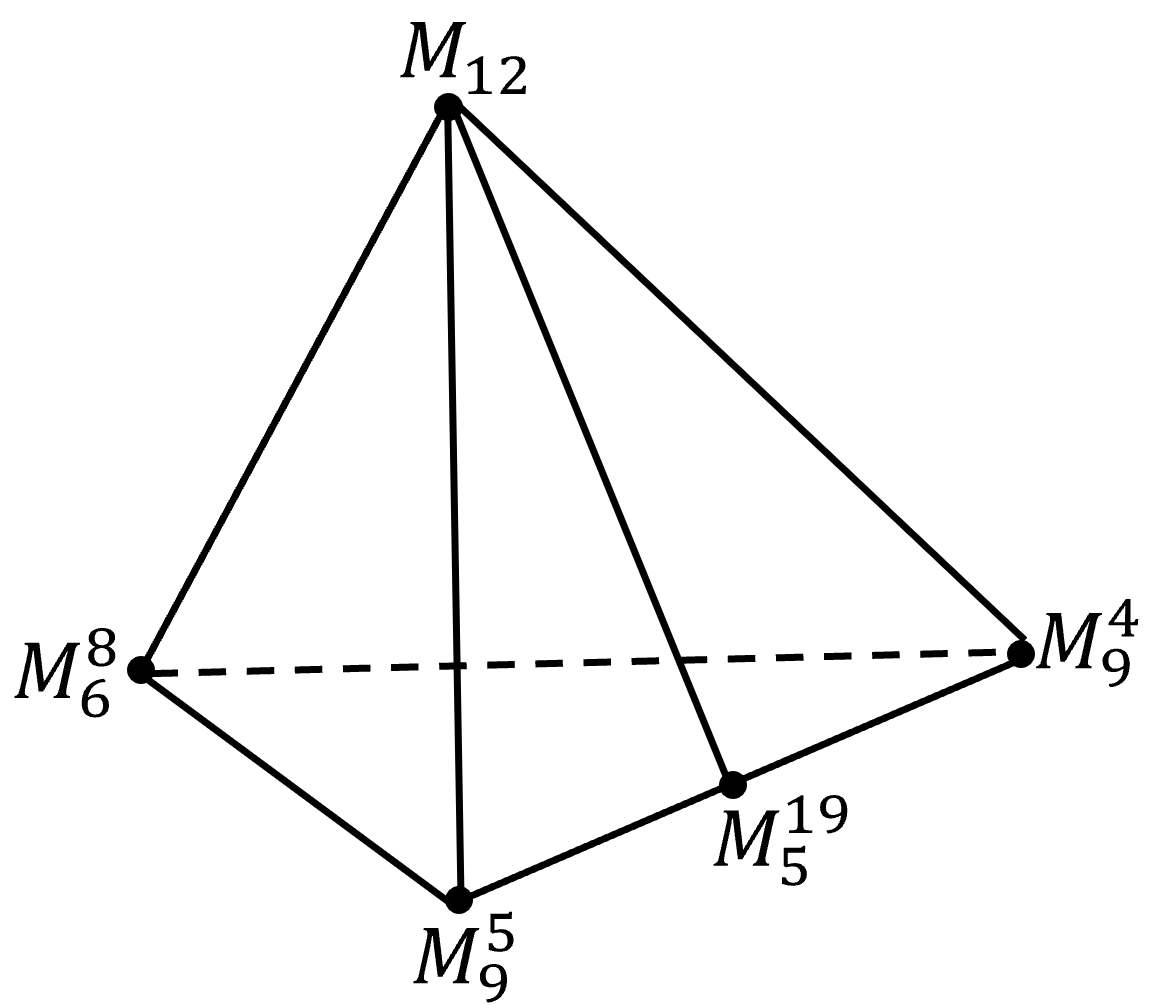}
		\caption{Simplex with vertex  $M_{12}$ and interior in the stratum $\mathrm{str}(\{c_1,c_4, c_{10},c_{11}\})$.\\
			$M_{12}M_9^4M_5^{19}M_9^5, M_{9}^4M_5^{19}, M_9^5M_5^{19}, M_{12}M_5^{19}\in \mathrm{str}(\{c_1,c_4,c_{10},c_{11},c_{12}\})$(5-ring),\\
			$M_{12}M_9^4M_6^{8},M_{12}M_6^8 \in \mathrm{str}(\{c_1,c_4,c_7,c_8,c_{10},c_{11}\})$(6-ring),\\
			$M_{12}M_9^5M_6^8 \in \mathrm{str}(\{c_1, c_4,c_8,c_{10},c_{11}\})$(5-chain),\\
			$M_{12}M_9^4 \in \mathrm{str}(\{c_1,c_2,c_4,c_5,c_7,c_8,c_{10},c_{11},c_{12}\})=\mathrm{str}(\mathrm{sys}(M_{9}^{4}))$,\\
			$M_{12}M_9^5 \in \mathrm{str}(\{c_1,c_3,c_4,c_5,c_6,c_8,c_{10},c_{11},c_{12}\})=\mathrm{str}(\mathrm{sys}(M_{9}^{5}))$.\\
		The straight line $M_{12}M_5^{19} $ is a fixed point set of the reflection $r$ defined below, which does not extend to an automorphism of the entire simplex.}
		\label{1_4_10_11}
	\end{figure}

	\begin{figure}[!ht]
		\includegraphics[width=0.4\textwidth]{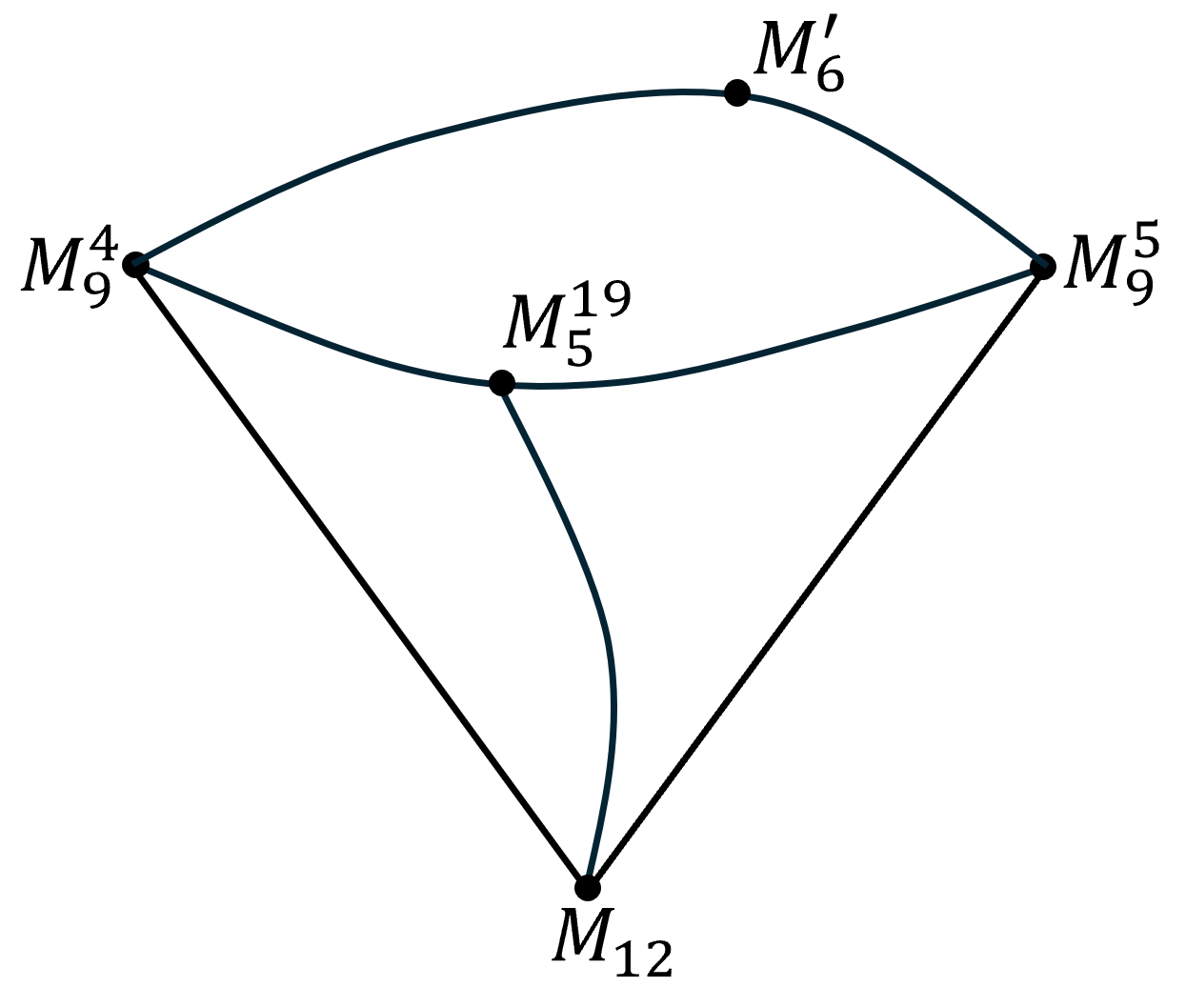}
		\caption{Cell with vertex $M_{12}$ contained in the stratum $\mathrm{str}(\{c_1,c_{10}, c_{11},c_{12}\})$.\\
			$M_{12}M_9^4M_5^{19}M_9^5, M_{9}^4M_5^{19}, M_9^5M_5^{19}, M_{12}M_5^{19}\in \mathrm{str}(\{c_1,c_4,c_{10},c_{11},c_{12}\})$(5-ring),\\
			$M_{12}M_9^4M_6^{\prime}M_9^5,M_9^4M_6^{\prime},M_9^5M_6^{\prime} \in \mathrm{str}(\{c_1,c_8,c_{10},c_{11},c_{12}\})$(5-chain),\\
			$M_{12}M_9^4 \in \mathrm{str}(\{c_1,c_2,c_4,c_5,c_7,c_8,c_{10},c_{11},c_{12}\})=\mathrm{str}(\mathrm{sys}(M_{9}^{4}))$,\\
			$M_{12}M_9^5 \in \mathrm{str}(\{c_1,c_3,c_4,c_5,c_6,c_8,c_{10},c_{11},c_{12}\})=\mathrm{str}(\mathrm{sys}(M_{9}^{4}))$.
		}
		\label{1_10_11_12}
	\end{figure}

	\begin{figure}[bh]
		\centering
		\includegraphics[width=0.95\linewidth]{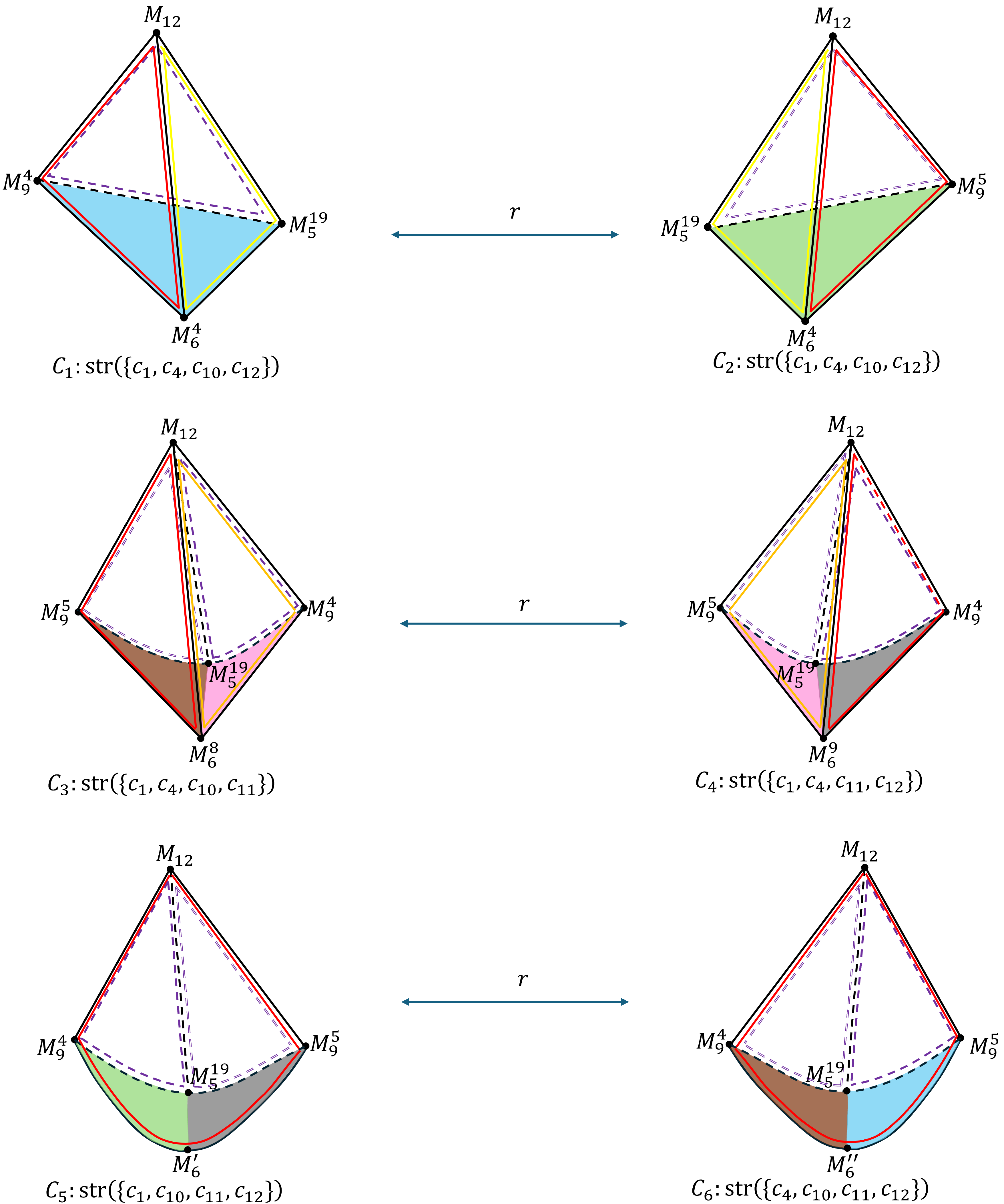}
		\caption{The cells of the cell decomposition of $\mathcal{P}/\Gamma_2$ showing the action of a reflection $r$ defined above. Gluing maps are indicated with colours.}
		\label{All_cell}
	\end{figure}

	\newpage
	Figure \ref{All_cell} shows the cells described above on a neighbourhood of the Bolza surface, where $r$ is an (orientation reversing) reflection of the Bolza surface in the extended mapping class group
	\begin{equation*}
		r(c_{1},c_{2},\ldots,c_{12})=(c_4,c_3,c_2,c_1,c_8,c_7,c_6,c_5,c_9,c_{12},c_{11},c_{10})
	\end{equation*}

	In Figure~\ref{All_cell}, faces painted the same color are glued together, and the gluing maps of faces sharing the same solid colour come from automorphisms of $M_5^{19}$. The faces outlined in red are contained in strata of $5$-chains, the faces outlined in purple are contained in strata of $5$-rings, the faces outlined in orange are contained in the stratum of a $6$-ring, and the two faces outlined in yellow are contained in strata of 4-chains which are glued together. The red and orange outlined faces are fixed point sets of groups shown in the Table \ref{isotropy_group} and are not glued to other simplices shown. Under the reflection, the yellow outlined simplex is fixed. The five purple outlined simplices with vertices $M_{12}, M_{9}^4$ and $M_{5}^{19}$ are glued together. The 5 faces outlined in double purple lines, with vertices $M_{12}$, $M_{9}^5$ and $M_{5}^{19}$ are also glued together.
	
	When calculating the strata around the critical point $M_{6}$, we found that in the example shown in Figure 12 of \cite{MS}, some of the strata are missing. The actual stratification around this critical point is more complicated than that given in \cite{MS}.
	
	\subsection{Isotropy and orbifold Euler characteristic}
	
	 For an orbifold, one can compute the orbifold Euler characteristic $\chi^{\mathrm{orb}}$ via a compatible triangulation: for each open cell $e$ in the triangulation with isotropy subgroup $G_e$,
	 $$
	 \chi^{\mathrm{orb}}(\mathcal{O}) = \sum_{e \subseteq \mathcal{O}} \frac{(-1)^{\dim e}}{|G_e|}.	 
	$$
	Define $\mathcal{M}_{g}$ to be the moduli space of orientable, closed, compact, connected surfaces of genus $g$. As shown in \cite{HarerZagier} and independently in \cite{Penner1988} the orbifold Euler characteristic of $\mathcal{M}_{g}$ is given by the formula:
$$
 \chi^{\mathrm{orb}}(\mathcal{M}_{g}) = \frac{(2g - 1)B_{2g}}{(2g)!} (2g - 3)!,
	$$
		So $ \chi^{\mathrm{orb}} (\mathcal{M}_{2})=-\frac{1}{240}$. The orbifold Euler characteristic for an orbifold is an orbifold-homotopy invariant. 
	
	Recall that we calculated the stratification around a critical point $p$ from the decomposition $\mathcal{V}(p)$ of the unit tangent space to $\mathcal{T}$ at $p$.

Note that for $\gamma$ an element of the mapping class group, we can represent $\gamma$ as a matrix acting on $T_p\mathcal{T}$. If $\gamma$ fixes a stratum, the corresponding matrix also fixes every vector corresponding to a point  in the Voronoi decomposition associated with this stratum. In this way we can compute the isotropy of the cells in Figure \ref{All_cell}. The structure of the fixed point sets of the mapping class group in Teichm\"uller space was discussed in detail in Section 3.1 of \cite{JustVCD}. Using the fact that the topological Morse function $syst$ is mapping class group-equivariant, it was shown that fixed point sets are either critical points, or higher dimensional connected embedded submanifolds transverse to noncritical level sets and each containing at least one critical point. Consequently, we can find all fixed point sets by studying the fixed point sets around the critical points.
	
	Select a representative from every mapping class group orbit of each cell. The Table \ref{isotropy_group} lists the cells of all dimensions together with their corresponding isotropy groups, from which we can compute that $ \chi^{\mathrm{orb}} (\mathcal{P}/\Gamma_2)=-\frac{1}{240}$. 
	\begin{table}[bh]
		\begin{tabular}{
				l |
				l |
				>{\raggedright\arraybackslash}p{10cm}   
			}
			
			\hline
			& Isotropy group & Cells  \\ \hline
			\multirow{4}{*}{0-cells} & $GL(2,3)$ &$M_{12}$  \\ \cline{2-3} 
			& $2D_6$ &$M_6$ \\ \cline{2-3} 
			& $D_6$ &$M_9$ \\ \cline{2-3} 
				& $\mathbb{Z}_{10}$ &$M_5$ \\ \hline
			\multirow{4}{*}{1-cells} & $D_6$ & $M_{12}M_9^4$, $M_{12}M_9^5$, $M_{12}M_6^4$  \\ \cline{2-3} 
			& $D_4$ & $M_{12}M_6^8$, $M_{12}M_6^9$  \\ \cline{2-3} 
			& $\mathbb{Z}_2 \times \mathbb{Z}_2$ &  $M_9^4M_6^4$, $M_9^4M_6^9$, $M_9^4M_6^{\prime}$, $M_9^4M_6^{\prime \prime}$,  $M_9^4M_6^8$  \\ \cline{2-3} 
			& $\mathbb{Z}_2$ &  $M_{12}M_5^{19}$, $M_9^4M_5^{19}$,  $M_6^4M_5^{19}$ \\ \hline
			\multirow{4}{*}{2-cells} &$\mathbb{Z}_2 \times \mathbb{Z}_2$ &$M_{12}M_9^4M_6^4$, $M_{12}M_9^5M_6^4$, $M_{12}M_6^{8}M_9^5$, $M_{12}M_6^8M_9^4$, $M_{12}M_6^9M_9^5$, $M_{12}M_6^9M_9^4$, $M_{12}M_9^{4}M_6^\prime M_9^5$,  $M_{12}M_9^{4}M_6^{\prime \prime} M_9^5$ \\ \cline{2-3} 
			& $\mathbb{Z}_2$ &$M_{12}M_9^4M_5^{19}$, $M_{12}M_9^5M_5^{19}$, $M_{12}M_5^{19}M_6^4$,  $M_{9}^4M_6^4M_5^{19}$,  $M_{9}^5M_6^4M_5^{19}$, $M_{9}^4M_6^8M_5^{19}$,  $M_{9}^5M_6^8M_5^{19}$,  $M_{9}^4M_6^9M_5^{19}$ \\ \hline
			\multirow{1}{*}{3-cells} &$\mathbb{Z}_2$ & $C_1$, $C_2$, $C_3$, $C_4$, $C_5$, $C_6$  \\ \hline
		\end{tabular}
		\caption{The cells of all dimensions together with their corresponding isotropy groups.}
		\label{isotropy_group}
	\end{table}

	\section{Cell decompositions of moduli space}
	\label{jiexing}
	There are two different proven ways of constructing cell decompositions of $\mathcal{M}_{2}$, both of which are not well-known. Subsection \ref{paulscelldecomp} surveys a cell decomposition from \cite{SchmutzVoronoi}, which is the simplest possible. The second cell decomposition uses the theoretical framework of \cite{RivinII}. In \cite{Armando} it was pointed out that this framework could be used to describe all possible Voronoi/Delaunay decompositions of the orbifold obtained as the quotient of a genus 2 hyperbolic surface by the hyperelliptic involution. In \cite{Armando} the top-dimensional cells and the identifications between top-dimensional cells were determined. We computed the entire cell decomposition, including all gluing maps as well as cells in the bordification. The angle coordinates used to describe these cells are discussed in Subsection \ref{anglecoordssec}. Subsection \ref{jiexingcomp} outlines the details of the computation of this cell decomposition. The complete computation, together with the SageMath code required to reconstruct the cell decomposition, is available in the GitHub repository \url{https://github.com/Jason-500/ModuliCells}. Finally the four critical points of the systole function are described in terms of angle coordinates in Subsection \ref{criticalanglesec}.

	\subsection{A cell decomposition from sets of minima}
	\label{paulscelldecomp}
	The purpose of this subsection is to describe the cell decomposition of $\mathcal{M}$ from \cite{SchmutzCelldecomp1}. This cell decomposition is defined in terms of sets of minima defined in Subsection \ref{subdefns}.
	
	It was shown in \cite{SchmutzMorse} that $\mathrm{Min}(C)$ is a continuously differentiable cell in $\mathcal{T}$ when certain ``regularity conditions'' are fulfilled. As discussed below, these regularity conditions are fulfilled on the set of minima $\mathrm{Min}(C_{7})$, where $C_7=\{c_1, c_2, c_3, c_4, c_{10}, c_{11}, c_{12} \}$ is a seventuple, contained in the set of systoles of $M_{12}$. The systolic graph of $C_{7}$ is shown in Figure \ref{graphc7fig}. The set $C_{7}$ clearly fills, because $C_{7}$ contains a 4-chain.
	
	\begin{figure}[!thpb]
		\centering
		\includegraphics[width=0.4\textwidth]{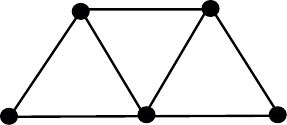}
		\caption{The systolic graph of $C_{7}$.}
		\label{graphc7fig}
	\end{figure}
	
	It was shown in Lemma 4 of \cite{SchmutzVoronoi} that the lengths of curves in $C_{7}$ parameterise $\mathcal{T}$. That is to say, knowing the lengths of all the curves in $C_{7}$ uniquely determines the point in $\mathcal{T}$. It follows from Theorem 1 of \cite{SchmutzVoronoi} that $\mathrm{Min}(C_{7})$ is 6-dimensional. The fact that the lengths of curves in $C_{7}$ parameterise also ensures that the regularity conditions of Theorem 15 of \cite{SchmutzMorse} are satisfied. Consequently, $\mathrm{Min}(C_{7})$ is a 6-dimensional cell, and the boundary of $\mathrm{Min}(C_{7})$ is a disjoint union of sets of minima of filling subsets of $C_{7}$. One can show that each of the subsets of $C_{7}$ of cardinality 6 fill, and determine the seven 5-dimensional boundary cells of $\mathrm{Min}(C_{7})$. These 5-dimensional boundary cells meet along boundary cells corresponding to smaller filling subsets of $C_{7}$.
	
	Theorem 5 of \cite{SchmutzVoronoi} states that the mapping class group orbits of $\mathrm{Min}(C_{7})$ and its boundary cells give a cell decomposition of $\mathcal{T}$. This is proven by using the observation that for each element $c_{i}$ in $C_{7}$, $C_{7}\setminus\{c_{i}\}$ is contained in a unique seventuple $C_{7}^{i}$, with $C_{7}^{i}\neq C_{7}$. $\mathrm{Min}(C_{7}^{i})$ is the interior of the 6-dimensional cell adjacent to $\mathrm{Min}(C_{7})$, sharing the common boundary cell $\mathrm{Min}(C_{7}\setminus\{c_{i}\})$.
	
	\subsection{Angle Coordinates}
	\label{anglecoordssec}
	A hyperbolic metric on $S$ projects onto a singular hyperbolic metric on $S/\tau$, and this metric determines a Voronoi decomposition of $S/\tau$ based around the Weierstrass points. This Voronoi decomposition is dual to a Delaunay decomposition. A Delaunay decomposition determines a set of circumcircles of the cells in the decomposition, with the property that no vertex of the Delaunay decomposition lies in the interior of any circumcircle. The edges of the Delaunay decomposition have their endpoints on pairs of intersections of circles. For each edge of the Delaunay decomposition, it is therefore possible to define an angle, as shown in Figure \ref{fig:angleparameter}. The angle is the same at each endpoint of an edge $e$ of the Delaunay decomposition, so the edge $e$ can be unambiguously labelled by this angle $\theta_{e}$. The graphs on $S/\tau$ that arise as Delaunay decompositions in this way are completely characterised by the constraints derived in \cite{RivinI}, summarised in \cite{Springborn}. 
	
	\begin{figure}[htbp]
    \centering
    \begin{tikzpicture}
      \coordinate (O1) at (0,0);
      \coordinate (O2) at (2,0);
      \draw (O1) circle (1.5);
      \draw (O2) circle (1.5);

      \coordinate (A) at (1,  1.118);
      \coordinate (B) at (1, -1.118);

      \draw[thick] (A) -- (B) node[pos=0.4, right] {$e$};

      \coordinate (P1) at ($(O1) + (180:1.5)$);
      \coordinate (P2) at ($(O2) + (20:1.5)$);
      \coordinate (P3) at ($(P1) + (100:1.2)$);
      \coordinate (P4) at ($(P1) + (260:1.2)$);
      \coordinate (P5) at ($(P2) + (80:1)$);
      \coordinate (P6) at ($(P2) + (275:1.8)$);
      \coordinate (P7) at ($(B) + (190:2)$);
      \coordinate (P8) at ($(B) + (350:2)$);
      \draw[thick,dashed] (A) -- (P1) (B) -- (P1);
      \draw[thick,dashed]  (A) -- (P2) (B) -- (P2);
      \draw[thick,dashed] (P1) -- (P3) (P1) -- (P4);
      \draw[thick,dashed] (P2) -- (P5) (P2) -- (P6);
      \draw[thick,dashed] (B) -- (P7) (B) -- (P8);
      \fill (P3) node[above, xshift=0pt, yshift=0pt] {$D$};

      \coordinate (TA1) at ($(A)!0.8!-90:(O1)$);
      \coordinate (TA2) at ($(A)!0.8!90:(O2)$);
      \coordinate (TB1) at ($(B)!-0.8!-90:(O1)$);      
      \coordinate (TB2) at ($(B)!-0.8!90:(O2)$);

      \draw (A) -- (TA1);
      \draw (A) -- (TA2);
      \draw (B) -- (TB1);
      \draw (B) -- (TB2);

      \pic[draw, -, angle radius=5mm, pic text={$\theta_e$}] {angle = TA2--A--TA1};
      \pic[draw, -, angle radius=5mm, pic text={$\theta_e$}] {angle = TB1--B--TB2};

    \end{tikzpicture}
    \caption{The angle parameter $\theta_e$ corresponding to the edge $e$ in a Delaunay triangulation $D$. Figure taken from \cite{JChen}.}
    \label{fig:angleparameter}
	\end{figure}
	
	\begin{thm}[Simplified version of Theorem 1.8 of \cite{Springborn}]
		Let $\Sigma$ be a cell decomposition of $S/\tau$ with vertices at the cone singularities. Suppose also that each edge of $\Sigma$ is labelled by an angle $\theta\in (0,\pi)$. We require that the cone singularities at the vertices of $\Sigma$ are all equal to $\pi$.
		A Euclidean circle pattern corresponding to this data exists if and only if for every nonempty subset
		$F'\subset F$ of the set $F$ of faces, and $E'$ a subset of the set of all edges $E$ with the property that an edge in $E'$ is incident with a face  $f \in  F'$, then 
		\begin{equation}
		\label{constrainteqns}
			\pi\mid F'\mid \leq \sum_{e\in E'}2\theta_{e}
		\end{equation}
		where $\mid F'\mid$ is the cardinality of $F'$, and equality holds if and only if $F'=F$.\\
		If it exists, the circle pattern is unique up to similarity. \\
		A hyperbolic circle pattern on $S/\tau$ corresponding to this data exists if and only if in the above condition, strict inequality also holds when $F'=F$. If it exists, the circle pattern is unique up to hyperbolic isometry.
	\end{thm}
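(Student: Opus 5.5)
This is Rivin's/Springborn's circle-pattern existence theorem, quoted as a simplified Theorem 1.8 of \cite{Springborn}, so I would prove it as a specialisation of the general statement rather than from scratch.

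First I would state Springborn's theorem in its general form. There one has a cell decomposition of a closed surface, intersection angles $\theta_e\in(0,\pi)$ on the edges, and prescribed cone angles $\Theta_v$ at the vertices. The existence criterion is a system of linear inequalities indexed by subsets of faces: for each nonempty $F'\subset F$, with $E'$ the set of edges incident to faces in $F'$,
$$\sum_{e\in E'} 2\theta_e \geq \pi |F'| - \sum_{v} (\text{a vertex correction}),$$
where the correction involves $2\pi-\Theta_v$ at the vertices touched by $F'$. Equality is required for $F'=F$ (the Euclidean Gauss--Bonnet condition). In the hyperbolic case equality is replaced by strict inequality, because area is then positive.

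Next I would substitute $\Theta_v=\pi$ at all six cone points of $S/\tau$ and simplify the correction terms. I would check the Euclidean equality case against Euler's formula on the sphere with six vertices: $V-E+F=2$. Summing the angle condition over faces uses the angle sum $\sum_f(\pi - \ldots)$. Each cone of angle $\pi$ contributes curvature $\pi$, and the six cones give total curvature $6\pi-4\pi=2\pi$, consistent with the sphere. Running this bookkeeping correctly turns Springborn's inequality into exactly (\ref{constrainteqns}). Uniqueness up to similarity or isometry is quoted directly.

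The main obstacle is the bookkeeping of the vertex terms. Springborn's conditions are phrased with exterior intersection angles and a separate contribution from vertex cone angles. To get the clean form $\pi|F'|\le\sum 2\theta_e$, I need to confirm that with $\Theta_v=\pi$ the vertex contributions cancel, or are absorbed by the convention $\theta_e\in(0,\pi)$ together with the definition of $E'$. I would verify this first on a single triangle face and on the full Euler-characteristic count before trusting the general reduction.
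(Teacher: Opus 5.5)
Your route is the paper's route, since the paper gives no argument beyond quoting the statement as a specialisation of Springborn's theorem. But the specialisation is the whole content, and your bookkeeping for it goes wrong in two places.

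\textbf{The curvature check.} Six cone points of angle $\pi$ carry total curvature $6\pi$, while a flat metric on the sphere needs exactly $4\pi$. So no Euclidean circle pattern with these data exists. The surplus $6\pi-4\pi=2\pi$ you compute is the hyperbolic area of $S/\tau$; it is not evidence that the Euclidean equality case is consistent. Combinatorially, the vertex conditions force $\sum_{e\in E}\theta_e=\pi(|E|-3)=\pi(|F|+1)$. Hence the Euclidean equality for $F'=F$ never holds, and the Euclidean clause is vacuous on both sides.

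\textbf{The vertex terms.} The general theorem has no vertex correction terms. In the Bobenko--Springborn form the inequality reads $\sum_{f\in F'}\Phi_f\le\sum_{e\in E'}2\theta_e$, where $\Phi_f$ is the cone angle at the \emph{centre of the circle} of $f$. The cone angle at a vertex is not a free parameter: the kite decomposition forces it to equal $\sum_{e\ni v}(\pi-\theta_e)$. Prescribing angle $\pi$ at the Weierstrass points is therefore just a linear equation on the $\theta_e$ at each vertex. The cancellation you single out as the main obstacle is not where the difficulty lies.

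\textbf{The real mismatch is on the left-hand side.} In the Delaunay picture the circumcentres are regular points, so $\Phi_f=2\pi$. The literal specialisation is then $2\pi|F'|\le\sum_{e\in E'}2\theta_e$, not the displayed $\pi|F'|$; the displayed form corresponds to cone angle $\pi$ at the circle centres. So no bookkeeping will produce (\ref{constrainteqns}) exactly, and you need an extra observation.

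\textbf{The missing observation.} Fix a nonempty $F'$ with vertex set $V'$.
\begin{enumerate}
\item With every vertex cone angle equal to $\pi$ and every $\pi-\theta_e>0$, summing the vertex equations over $V'$ gives $2\sum_{e\in E'}(\pi-\theta_e)\le\pi|V'|$.
\item An Euler-characteristic count on the subcomplex spanned by $F'$ gives $|E'|-|F'|>\tfrac12|V'|$. This uses that faces have at least three sides and that $|E|-|F|=4$ on a sphere with six vertices.
\item Combining the two, $\sum_{e\in E'}\theta_e\ge\pi|E'|-\tfrac{\pi}{2}|V'|>\pi|F'|$.
\end{enumerate}
So in this setting the hyperbolic face inequalities, in either normalisation, already follow from the vertex equations. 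That is what makes the stated hyperbolic equivalence true. The displayed inequalities are redundant there, not a literal specialisation of Springborn's condition.
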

	
	If a Voronoi graph dual to a graph satisfying the constraints in Equation (\ref{constrainteqns}) has the property that all sufficiently small deformations of the hyperbolic metric leave the graph unchanged, the Voronoi graph must have valence 3. The top-dimensional (in our case, six dimensional) cells in a cell decomposition of $\mathcal{T}$ therefore correspond to Delaunay triangulations of $S/\tau$ satisfying the constraints in Theorem \ref{constrainteqns} whose pre-images in $S$ give Delaunay triangulations of $S$. The computer programm Plantri can find all such graphs. An algorithm is also given in \cite{JChen}. Euler characteristic arguments give that the Voronoi decomposition of the sphere with singular hyperbolic structure must have 8 vertices, 12 edges and 6 faces.
	
	The graphs representing lower dimensional cells in the interior of the cell decomposition are obtained by deleting edges of the Delaunay graphs representing top dimensional cells, or equivalently, collapsing edges of the Voronoi decomposition. In the generic case, the dimension of the cell represented by a graph is reduced by one each time an edge is deleted, and the facets of a cell correspond to graphs, each of which is obtained by deleting one edge of the graph corresponding to a cell. The exception to this is given by bipartite graphs. If a graph is bipartite, one of the constraints that the sum of the angles at each of the vertices is equal to $\pi$ becomes dependent. When a bipartite graph is obtained, it is therefore possible to reduce the dimension of a cell corresponding to a graph by deleting two edges of the graph.
	
	The graphs obtained in this way give cells representing orbits of the action of the mapping class group on $\mathcal{T}$. To obtain a cell decomposition of $\mathcal{M}_{2}$, we take one cell for each orientation preserving planar isomorphism class of graphs. We also need to understand the action of the mapping class group on these cells. It can happen that a cell corresponds to a graph with nontrivial orientation preserving planar automorphisms. These automorphisms correspond to elements of the mapping class group that map the cell to itself. To obtain a cell in the cell decomposition of moduli space, it is necessary to take the quotient of the cell  by this action.
	
\begin{figure}[!thpb]
		\centering
		\includegraphics[width=0.5\textwidth]{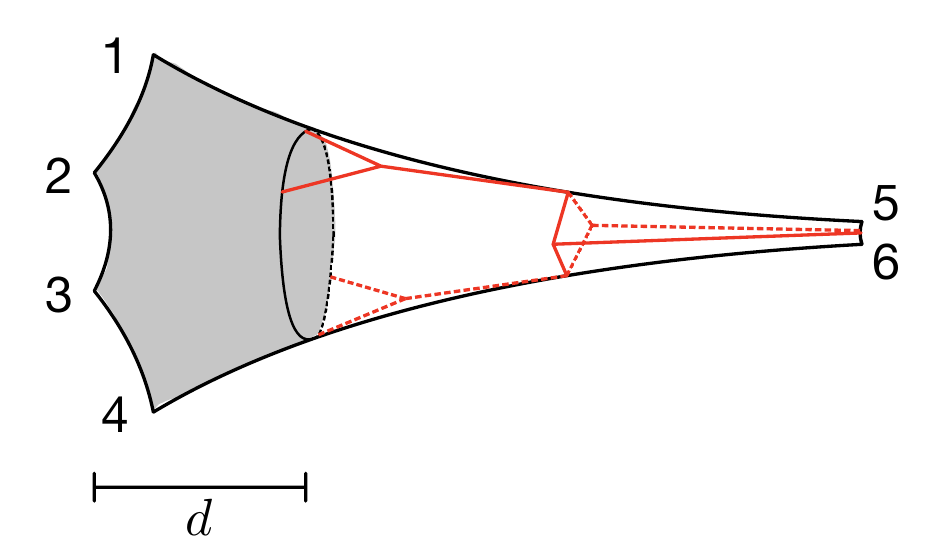}
		\caption{The figure shows the Voronoi graph (red) on the 2-sphere with cone points corresponding to a nonseparating geodesic on the surface connecting the Weierstrass points 5 and 6 that is made short. }
		\label{nonseparating1}
	\end{figure}

\begin{figure}[!thpb]
		\centering
		\includegraphics[width=0.5\textwidth]{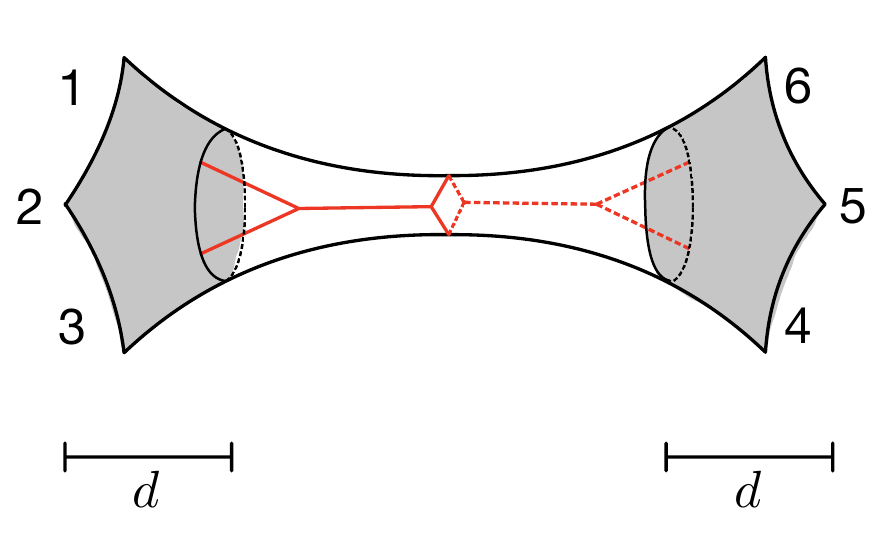}
		\caption{The figure shows the Voronoi graph (red) on the 2-sphere with cone points corresponding to a separating geodesic on the surface. This separating curve has the Weierstrass points 1,2,3 on one side and 4,5,6 on the other. }
		\label{separating1}
	\end{figure}

	Theorem \ref{constrainteqns} does not apply to graphs in the bordification representing noded surfaces. The idea behind how to obtain a cell decomposition of the bordificaton of $\mathcal{T}$ will now be explained; the full details with proofs are given in \cite{JChen}. Choose a top-dimensional cell corresponding to a valence 3 embedded planar graph. This cell can be described as a six dimensional polytope in $\mathbb{R}^{12}$ (Note that there are 12 angle parameters; one for each edge.) Some of the faces of this polytope do not represent cells interior to $\mathcal{T}$. Let $p$ be a point in the thin part of a cell. The corresponding surfaces can be decomposed into a union of long, thin collars of at most 3 short geodesics, as well as a ``thick'' part with diameter bounded from above by $d$. This is represented in Figures \ref{nonseparating1} and \ref{nonseparating}.
	
	\begin{figure}[!thpb]
		\centering
		\includegraphics[width=0.8\textwidth]{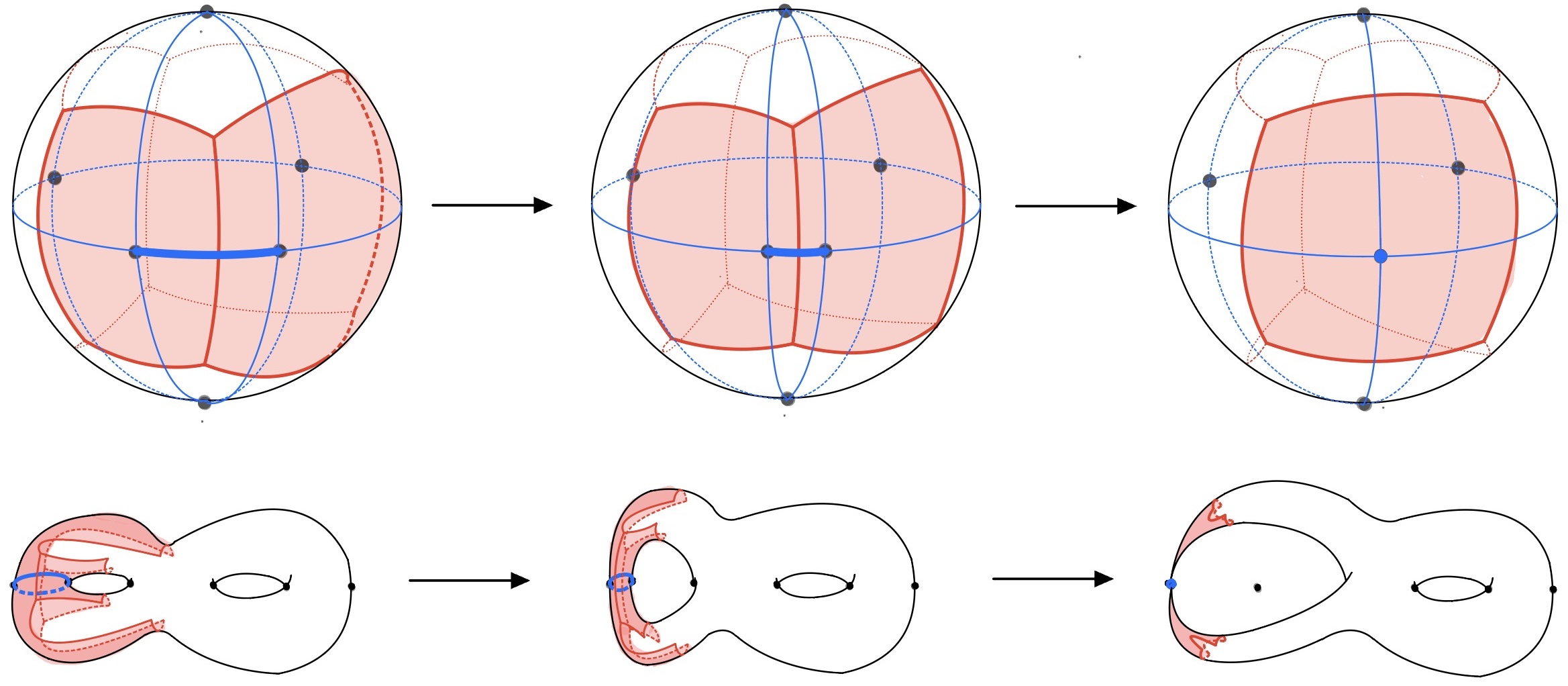}
		\caption{The figure shows an edge of the Delaunay graph (Blue) on the 2-sphere with cone points corresponding to a nonseparating geodesic on the surface that is made short. }
		\label{nonseparating}
	\end{figure}

\begin{figure}[!thpb]
		\centering
		\includegraphics[width=0.8\textwidth]{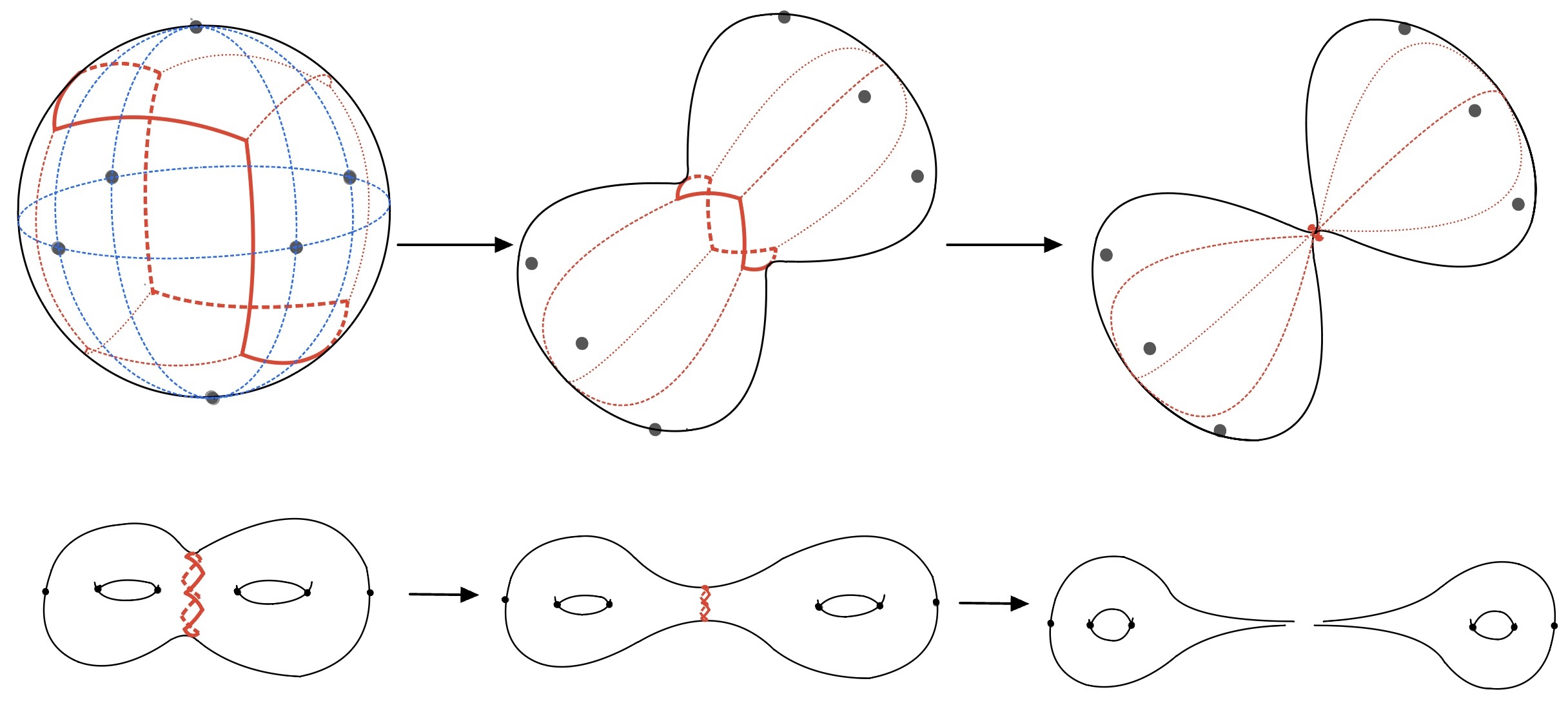}
		\caption{The figure shows the Voronoi graph (red) on the 2-sphere with cone points corresponding to a separating geodesic on the surface that is made short. }
		\label{separating}
	\end{figure}
	
	When only one geodesic $c$ is made arbitrarily short (this is called ``pinching'' the geodesic $c$) in the separating case, $c$ does not pass though any Weierstrass points, and the length $l$ of the collar of $c$ is large in comparison with $d$. Consequently there is a cycle in the Voronoi graph homotopic to $c$, that is collapsed to a point as $c$ is pinched. In the nonseparating case, the curve must pass through two distinct Weierstrass points. Since the length of the collar $l$ is much larger than $d$, there is a cycle in the Voronoi graph homotopic to $c$ and approximately half way along the collar of $c$, as shown in Figure \ref{nonseparating1}. This cycle is collapsed to a point as $c$ is pinched. The nonseparating case is illustrated in Figures \ref{nonseparating1} and \ref{nonseparating} and the separating case is illustrated in the Figures \ref{separating1} and \ref{separating}.
	
	We obtain similar results when more than one geodesic is pinched. Note that by the collar lemma, it is only possible to pinch disjoint geodesics, so the different cases to consider are: pinching two or three nonseparating geodesics, and pinching a separating geodesic and one or two nonseparating geodesics. In these cases, it is necessary to consider the various rates at which the geodesics are pinched. In all of these cases but two, the Voronoi graph contains a cycle homotopic to each of the geodesics being pinched, each of which is collapsed to a point as the geodesics are pinched. The two remaining cases involve pinching three disjoint nonseparating curves. When these curves are not pinched at the same rate, the Voronoi graph contains a cycle homotopic to each of the geodesics $c_{1}$ and $c_{2}$ that are pinched the fastest, where these two cycles are collapsed to a point as $c_{1}$ and $c_{2}$ are pinched. The final case is very symmetric; three nonseparating geodesics, each being pinched at the same rate. In this case, the Delaunay graph is a hexagon, with alternating edges shrinking to a point as the geodesics are pinched.
	
	After going through all these different cases, we can identify the degenerate graphs that we use to label the cells on the boundary of the top-dimensional polytopes representing noded surfaces in the bordification of the cell. As these graphs are obtained by taking limits of the graphs describing the interior points of the cell, the action of the mapping class group extends to the bordification.

	\subsection{Computational details}
	\label{jiexingcomp}
	The computation of the cellular decomposition described above was carried out in SageMath as part of the Masters thesis project of J. Chen \cite{JChen}.  The computation proceeds in two main steps.

	First, we determine the top-dimensional cells. All lower-dimensional cells can then be obtained from these top-dimensional cells.

	Recall that a condition for a Delaunay graph to determine a top-dimensional cell is that every face of the decomposition is triangular. This determines the relevant combinatorial parameters: such a graph must have $12$ edges, $8$ faces, and $6$ vertices.

	Since the size of this data set is modest, we can exhaustively enumerate all Delaunay-Voronoi graph pairs satisfying these combinatorial conditions. This yields $20$ candidates in total, as shown in Figure \ref{20pairs}. After imposing the constraints from Theorem \ref{constrainteqns}, only the $10$ graphs indicated in Figure \ref{20pairs} satisfy the required conditions. 
	\begin{figure}
		\centering
		\includegraphics[width=0.8\textwidth]{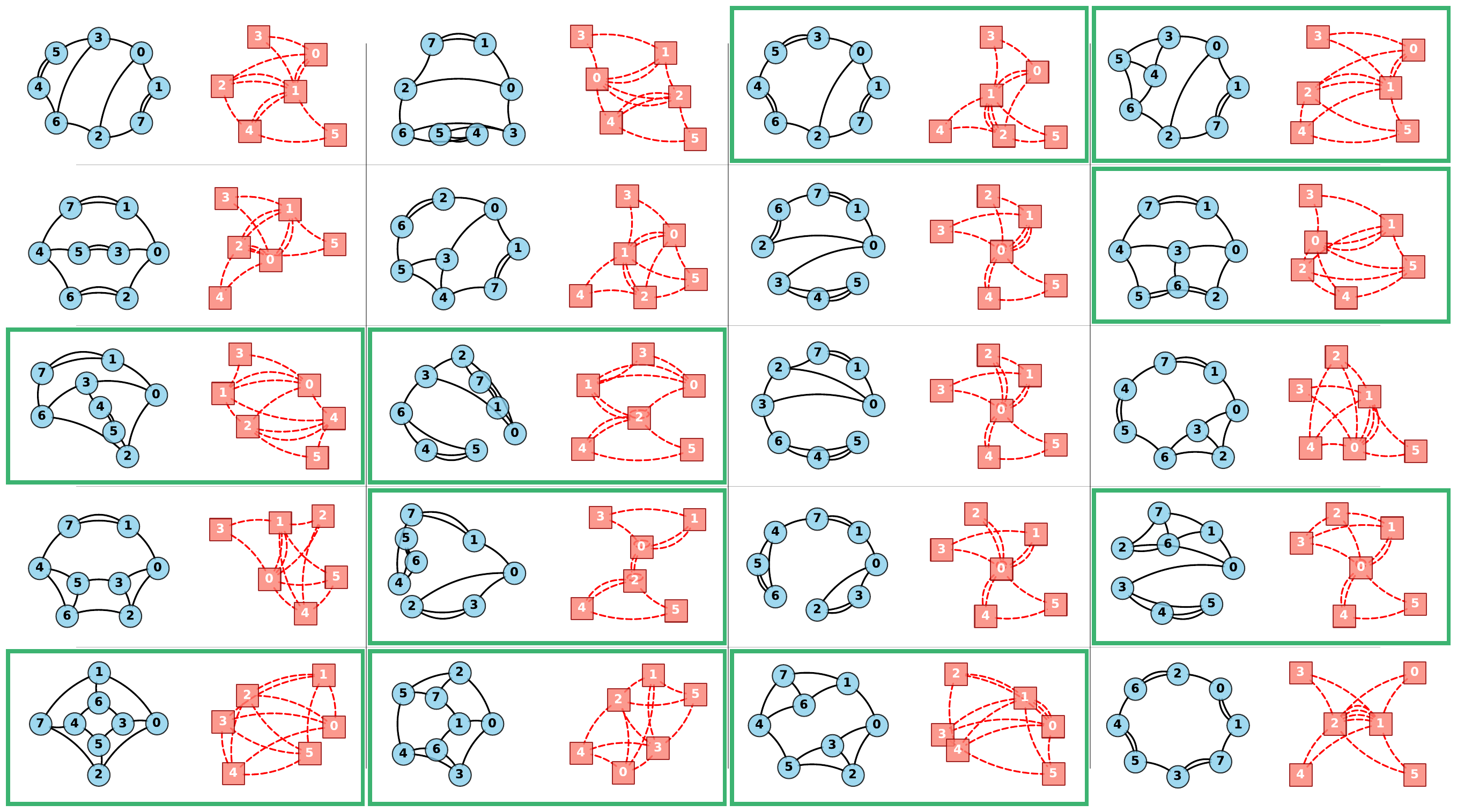}
		\caption{The 20 Delaunay (Pink) and Voronoi (Blue) graph pairs described above. The 10 satisfying the constraints from Theorem \ref{constrainteqns} are outlined in green. This graph was also known to \cite{Armando}.}
		\label{20pairs}
	\end{figure}
	We then compute the lower-dimensional cells. By Theorem \ref{constrainteqns}, the feasible region determined by the conditions associated with a given Delaunay-Voronoi graph pair is a polyhedron, and this polyhedron completely determines the geometry of the corresponding cell. In particular, the faces of the polyhedron are in strict correspondence with the faces of the cell. Consequently, using SageMath's `Polyhedron' class, we can efficiently determine the lower-dimensional cells and their combinatorial types by querying the centers of the boundary faces of the polyhedron. This also directly yields the adjacency relations among these cells. In this way, starting from each top-dimensional cell, we determine all lower-dimensional cells lying in its boundary.

	Transitions between cells of the same dimension are described by Whitehead moves on the corresponding Voronoi graphs. As it is possible for a face of a polytope to have a symmetry that does not extend to the entire polytope, a-priori this process is not guaranteed to uniquely determine all gluing maps, but it did suffice in our case.

Due to the fact that some cells have nontrivial automorphism groups, to obtain a cellular decomposition of moduli space, we further need to determine a fundamental domain for each cell. In principle, this can be achieved by computing the barycentric subdivision. However, this introduces an excessive number of cells. In our implementation, we therefore use a more targeted strategy. We observed that the automorphism groups of the polyhedra are generated only by reflections and rotations, so we subdivided the cells according to the following rules: For a reflection, we cut the cell along the codimension-one fixed-point set. For a rotation, we consider the orbit of each edge under the group action, choose a representative edge in each orbit, and impose inequalities requiring the angular parameter of the representative edge to be greater than or equal to those of the other edges in the same orbit. This breaks the rotational symmetry.

	A special situation occurs when a boundary face has a larger automorphism group than the cell containing it. In this case, we manually construct a barycentric refinement of the cells containing that face. In SageMath, this is implemented by choosing barycenters and using them to construct hyperplane cuts of the polyhedral cells. We begin with hyperplanes whose fixed-point sets are nonempty, and then propagate the induced subdivision step by step to higher-dimensional cells.
	
When constructing fundamental domains, there were cases in which the automorphism group of a polyhedron contained a reflection of the polyhedron. This gives rise to fundamental domains of polyhedra with boundary cells lying along fixed point sets of these reflections, rather than on the boundary of the polyhedron. We call such faces ``mirror faces''. Unlike the cells on the boundary of the fundamental domain that are also contained in the boundary of the polyhedron, mirror faces are not matched with other faces of the cell complex, and appear as boundary cells of moduli space. After a fundamental domain has been chosen, any isolated internal mirror face is recorded as a special boundary face in the corresponding SageMath $\Delta$-complex object.

	\subsection{The critical points described via angle coordinates}

\label{criticalanglesec}
	From the cellular decomposition associated with the angle parameters, we can identify the four critical points described above, as illustrated in Figure \ref{figcriticalpoints}.
	\begin{figure}
		\centering
		\includegraphics[width=\textwidth]{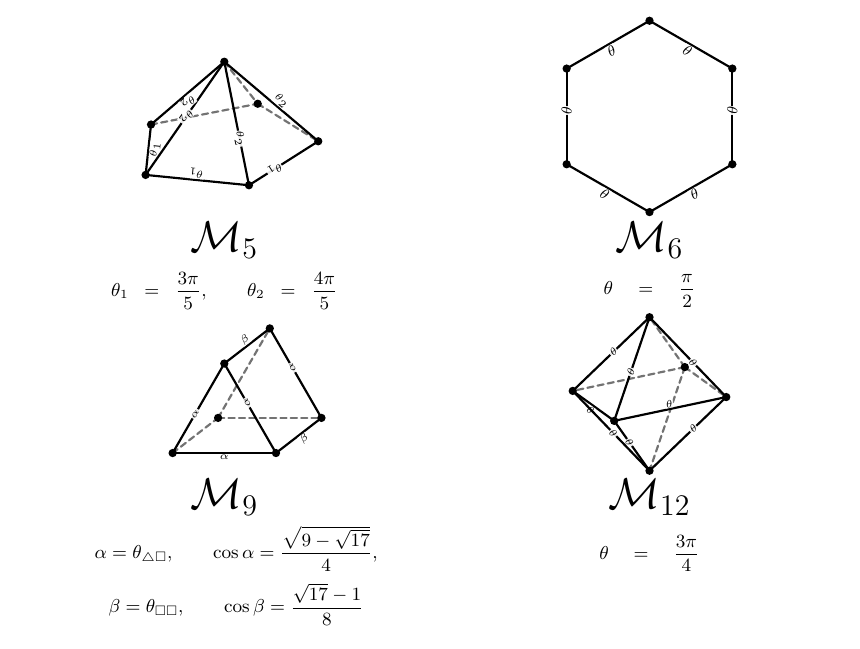}
		\caption{The Delaunay graphs of the four critical points, with edges labelled by the angle parameters at the critical point. Note that these graphs are identical to the systolic graphs of the corresponding points in $\mathcal{T}$, except for $\mathcal{M}_{5}$, where the Delaunay graph contains the systolic graph as a subgraph.}
		\label{figcriticalpoints}
	\end{figure}

\section{Appendix: Pseudocode}

    Previously in Section \ref{jiexing}, we have described how to obtain the top-dimensional cells of a cell decomposition based on angle coordinates. Below we present the pseudocode of the algorithm.
	\newpage

    \noindent\textbf{Algorithm 1}. Compute top-dimensional cells of $\mathcal{M}_2$\label{alg:topcells}
    \begin{algorithmic}[1]
    \Require None
    \Ensure $C_6$, the set of realizable top-dimensional cells

    \State $V \gets \{0,\dots,7\}$ \Comment{Vertex set}

    \Statex

    \Comment{Step 1: Initialize admissible local configurations at vertex $0$}
    \State $\mathcal{L}_0 \gets \{$
    \Statex \hspace{1.5em} (i) $0$ connected to $1,2,3$,
    \Statex \hspace{1.5em} (ii) $0$ connected to $1$ by a double edge and to $2$ by a single edge 
    \Statex $\}$

    \Statex

    \Comment{Step 2: DFS construction of trivalent connected graphs}
    \State $\mathcal{G}_{\mathrm{cand}} \gets \varnothing$ \Comment{Candidate Voronoi graphs}
    \For{each $\ell \in \mathcal{L}_0$}
    \State Initialize partial graph $G$ from $\ell$
    \State $\mathcal{S} \gets \{G\}$ \Comment{DFS stack}
    \While{$\mathcal{S} \neq \varnothing$}
        \State Pop $G$ from $\mathcal{S}$
        \If{$G$ violates trivalence or contains loops/triple edges}
        \State \textbf{continue} \Comment{Prune}
        \EndIf
        \If{$G$ is disconnected or non-planar}
        \State \textbf{continue} \Comment{Prune}
        \EndIf
        \If{$G$ is complete (all vertices degree $3$)}
        \State $\mathcal{G}_{\mathrm{cand}} \gets \mathcal{G}_{\mathrm{cand}} \cup \{G\}$ 
        \Else
        \For{each admissible edge addition $e$}
            \State Push $G \cup \{e\}$ to $\mathcal{S}$
        \EndFor
        \EndIf
    \EndWhile
    \EndFor

    \Statex

    \Comment{Step 3: Remove isomorphic duplicates}
    \State $\mathcal{G}_{\mathrm{iso}} \gets \varnothing$ \Comment{Non-isomorphic Voronoi graphs}
    \For{each $G \in \mathcal{G}_{\mathrm{cand}}$}
    \If{$G$ is not isomorphic to any graph in $\mathcal{G}_{\mathrm{iso}}$}
        \State $\mathcal{G}_{\mathrm{iso}} \gets \mathcal{G}_{\mathrm{iso}} \cup \{G\}$
    \EndIf
    \EndFor
    \Comment{Result: $|\mathcal{G}_{\mathrm{iso}}| = 17$}

    \Statex

    \Comment{Step 4: Enumerate rotation systems (combinatorial embeddings)}
    \State $\mathcal{E} \gets \varnothing$ \Comment{Admissible embeddings}
    \For{each $G \in \mathcal{G}_{\mathrm{iso}}$}
    \For{each rotation system $\rho$ on $G$}
        \State Construct faces via dart structure
        \State Compute $F(\rho)$
        \If{$|V| - |E(G)| + F(\rho) = 2$}
        \State $\mathcal{E} \gets \mathcal{E} \cup \{(G,\rho)\}$
        \EndIf
    \EndFor
    \EndFor

    \Statex

    \Comment{Step 5: Form Voronoi--Delaunay pairs up to equivalence}
    \State $\mathcal{P}_{\mathrm{VD}} \gets \varnothing$ \Comment{Voronoi--Delaunay pairs}
    \For{each $(G,\rho) \in \mathcal{E}$}
    \State $D \gets$ dual graph of $(G,\rho)$
    \If{$(G,D)$ is not isomorphic to any pair in $\mathcal{P}_{\mathrm{VD}}$}
        \State $\mathcal{P}_{\mathrm{VD}} \gets \mathcal{P}_{\mathrm{VD}} \cup \{(G,D)\}$
    \EndIf
    \EndFor
    \Comment{Result: $|\mathcal{P}_{\mathrm{VD}}| = 20$}

    \Statex

    \Comment{Step 6: Geometric realizability test}
    \State $C_6 \gets \varnothing$ \Comment{Output set}
    \For{each $(G,D) \in \mathcal{P}_{\mathrm{VD}}$}
    \State Assign angle parameters and check constraints
    \If{constraints admit a hyperbolic solution}
        \State $C_6 \gets C_6 \cup \{(G,D)\}$
    \EndIf
    \EndFor
    \Comment{Result: $|C_6| = 10$}

    \State \Return $C_6$
    \end{algorithmic}

\noindent\textbf{Step 2.} Determine all cells and gluing maps.

Starting from the top-dimensional cells, we reconstruct the cell decomposition dimension by dimension. In contrast to constructing and quotienting the faces of each cell separately, we first collect all faces of a fixed dimension and then choose canonical representatives simultaneously. This allows the identification of cells and the determination of gluing maps to be carried out in a single procedure.

Let $C_d$ denotes the set of canonical DFV tuples representing $d$-dimensional cells. The resulting decomposition is essentially described by the following data:

A set $C=\bigcup_d C_d$ of canonical DFV tuples
\[
    x=(D,F,V),
\]
one for each cell, where $D$ denotes the Delaunay planar graph, $F$ the set of oriented faces of $D$, and $V$ the Voronoi dual of $D$.

For every occurrence $c$ of a polyhedral cell, its \emph{canonical image}
\[
    \operatorname{Can}(c)=(\bar c,\varphi),
\]
where $\bar c\in C$ is the chosen canonical representative of its orientation-preserving planar isomorphism class and
\[
    \varphi:\bar c\longrightarrow c
\]
is an orientation-preserving label map.

For every $d$-dimensional cell $x$, we record
\[
    T_x=\{\bar c:\bar c\text{ occurs as a canonical }(d-1)\text{-face of }x\}
\]
and
\[
    P_x=\{(\bar c,\varphi):\operatorname{Can}(c)=(\bar c,\varphi),
    \ c\text{ is a polyhedral facet of }x\}.
\]
Thus $T_x$ records the combinatorial boundary of $x$, while $P_x$ records the actual attaching maps of its polyhedral facets.

Each top-dimensional cell is regarded as a \emph{root cell}. Faces obtained from it are always represented as faces of the same root polyhedron. In particular, once the DFV tuple associated with a face of a root polyhedron has been determined, all its descendants refer back to this representative rather than reconstructing the same polyhedral face repeatedly. This gives a consistent identification of faces inside each top-dimensional polyhedron.

The decomposition can then be reconstructed by the following recursive procedure.

\noindent\textbf{Algorithm 2}. Solve the cell decomposition of $\mathcal{M}_2$
\label{alg:mbar2}

\begin{algorithmic}[1]

\Require $C_6$

\Ensure $C=\bigcup_{d=0}^6 C_d$ and a dictionary
$\operatorname{Dict}_C:x\mapsto(T_x,P_x)$

\State Mark every $x\in C_6$ as a root cell
\State $d\gets 6$

\While{$d\geq 1$}

\State $R_{d-1}^{\mathrm{int}}\gets\varnothing$
\State $R_{d-1}^{\infty}\gets\varnothing$
\Comment{Raw interior and degenerate face occurrences}

\For{each $x=(D,F,V)\in C_d$}

    \State $\mathcal{F}_x\gets$ the $(d-1)$-dimensional facets of the polyhedron represented by $x$
    \Comment{\hyperref[mbaremark1]{(1)}}

    \For{each $f\in\mathcal{F}_x$}

        \State $E_\pi(f)\gets
        \{e:\theta_e=\pi\text{ identically on }f\}$

        \State $c_f\gets\operatorname{DFV}(f)$, obtained by deleting the edges in $E_\pi(f)$
        \Comment{\hyperref[mbaremark2]{(2)}}

        \If{$f$ contains an angle coordinate $\theta_e=0$}

            \State $R_{d-1}^{\infty}
            \gets
            R_{d-1}^{\infty}\cup\{(x,f,c_f)\}$

        \ElsIf{the edges of $V$ dual to $E_\pi(f)$ contain a cycle}

            \State $R_{d-1}^{\infty}
            \gets
            R_{d-1}^{\infty}\cup\{(x,f,c_f)\}$
            \Comment{\hyperref[mbaremark3]{(3)}}

        \Else

            \State $R_{d-1}^{\mathrm{int}}
            \gets
            R_{d-1}^{\mathrm{int}}\cup\{(x,f,c_f)\}$

        \EndIf

    \EndFor

\EndFor

\State $C_{d-1}\gets\varnothing$

\For{each $(x,f,c)\in R_{d-1}^{\mathrm{int}}$}
\Comment{Choose canonical representatives globally}

    \State $\mathrm{found}\gets\mathrm{false}$

    \For{each $\bar c\in C_{d-1}$}

        \If{$\bar c$ is orientation-preserving planar isomorphic to $c$
        with $\varphi(\bar c)=c$}

            \State $\operatorname{Can}(c)\gets(\bar c,\varphi)$
            \State $\mathrm{found}\gets\mathrm{true}$
            \State \textbf{break}

        \EndIf

    \EndFor

    \If{$\mathrm{found}=\mathrm{false}$}

        \State Mark $c$ as canonical
        \State $\operatorname{Can}(c)\gets(c,\operatorname{id}_c)$
        \State $C_{d-1}\gets C_{d-1}\cup\{c\}$

    \EndIf

\EndFor

\Comment{Apply the same canonicalization procedure to
$R_{d-1}^{\infty}$ if the boundary strata of the bordification are to be retained}

\For{each $x\in C_d$}

    \State $T_x\gets\varnothing$
    \State $P_x\gets\varnothing$

    \For{each $(x,f,c)$ occurring in
    $R_{d-1}^{\mathrm{int}}\cup R_{d-1}^{\infty}$}

        \State $(\bar c,\varphi)\gets\operatorname{Can}(c)$

        \State $T_x\gets T_x\cup\{\bar c\}$

        \State $P_x\gets P_x\cup\{(\bar c,\varphi)\}$

    \EndFor

    \State \textbf{append}
    $\{x\mapsto(T_x,P_x)\}$
    \textbf{to} $\operatorname{Dict}_{C_d}$

\EndFor

\State $d\gets d-1$

\EndWhile

\State $C\gets\bigcup_{d=0}^6 C_d$

\State $\operatorname{Dict}_C
\gets\bigcup_{d=1}^6\operatorname{Dict}_{C_d}$

\end{algorithmic}

{\footnotesize

\noindent\textbf{Remarks:}

\begin{enumerate}
\renewcommand{\labelenumi}{(\arabic{enumi})}

\item
\label{mbaremark1}
For every top-dimensional DFV tuple $x$, the corresponding polyhedron is obtained directly from the angular constraints described in Section~\ref{jiexing}. Once a top-dimensional polyhedron has been constructed, all of its lower-dimensional faces are treated as faces of this fixed root polyhedron. Their DFV data are derived from the vanishing or saturated angle coordinates. Thus lower-dimensional cells need not be reconstructed independently from the angle inequalities.

\item
\label{mbaremark2}
Let $\theta_e$ denote the angle coordinate associated with a Delaunay edge $e$. If $\theta_e=\pi$ on a face, the corresponding two circumcircles coincide, and the edge $e$ disappears from the Delaunay decomposition. Hence the DFV tuple associated with a polyhedral face is obtained by deleting all such edges and merging the corresponding Delaunay faces, or equivalently by contracting their dual Voronoi edges.

The resulting DFV tuple depends only on the set of surviving edge labels. We therefore store, for each root cell, a dictionary from the surviving edge set to the corresponding DFV face. Descendants of the same root cell refer to this dictionary, which prevents the same polyhedral face from being reconstructed in different ways.

\item
\label{mbaremark3}
There are two types of degenerations detected at this stage. If some angle coordinate satisfies $\theta_e=0$, the corresponding Delaunay edge has zero length, producing a non-separating degeneration. On the other hand, deleting the edges with $\theta_e=\pi$ may contract a cycle in the Voronoi graph; this corresponds to a separating degeneration. Faces of either type belong to the boundary of the bordification rather than to $\mathcal{M}_2$ itself.

\item
The same canonical DFV tuple may occur several times among the facets of a single polyhedral cell. These occurrences need not determine the same attaching map. For this reason $T_x$ records only the distinct combinatorial faces, whereas $P_x$ retains every pair $(\bar c,\varphi)$ and hence contains the information necessary to reconstruct the polyhedral gluing.

\end{enumerate}

\section{Use of AI tools}
In Section  \ref{spinecomputation}, Kimi k2.5 was used to derive some code for solving equations, numerical calculation of 1st‑ to 5th‑order function derivatives, linear and quadratic programming, and gradient descent of convex functions. ChaptGPT 5 was used to tidy up the code from Section \ref{jiexingcomp} (this code was already functional without AI in \cite{JChen}) and for polishing the English in section \ref{jiexingcomp}.

}
	
	\bibliography{spinebib2}
	\bibliographystyle{plain}
	
\end{document}